\newtheorem{theorem}{Theorem}[section]
\theoremstyle{plain}
\newtheorem{claim}{Claim}
\newtheorem{corollary}[theorem]{Corollary}
\newtheorem{lemma}[theorem]{Lemma}
\newtheorem{prop}[theorem]{Proposition}
\newtheorem{remark}[theorem]{Remark}
\numberwithin{equation}{section}
\def\beps{\boldsymbol{\eps}}
\def\Xxi{{X^{\vec s}_\zeta}}
\def\One{{1\!\!1}}
\def\dist{{\rm dist}}
\def\Lip{{\rm Lip}}
\def\Proj{{\rm Proj}}
\def\half{\frac{1}{2}}
\newcommand{\lam}{\lambda}
\newcommand{\gam}{\gamma}
\newcommand{\om}{\omega}
\newcommand{\Ups}{\Upsilon}
\newcommand{\Gam}{\Gamma}
\newcommand{\sig}{\sigma}
\def\bbe{{\bf e}}
\newcommand{\R}{{\mathbb R}}
\newcommand{\Q}{{\mathbb Q}}
\newcommand{\Z}{{\mathbb Z}}
\newcommand{\C}{{\mathbb C}}
\def\N{{\mathbb N}}
\def\Pr{{\rm Pr}}
\def\bp{{\bf p}}
\def\wt{\widetilde}
\def\A{{\mathcal A}}
\def\Lk{{\mathcal L}}
\def\Sf{{\sf S}}
\def\bx{{\mathbf x}}
\def\bz{{\mathbf z}}
\def\one{\vec{1}}
\def\be{\begin{equation}}
\def\ee{\end{equation}}
\newcommand{\Ek}{{\mathcal E}}
\newcommand{\eps}{{\varepsilon}}
\def\ov{\overline}
\newcommand{\const}{{\rm const}}
\def\ve1{\vec{1}}
\def\Ak{{\mathcal A}}
\def\Cyl{{\rm Cyl}}
\def\re{{\rm Re}}
\def\im{{\rm Im}}
\def\Tr{{\rm Tr}}
\def\proj{{\rm Proj}}
\def\beq{\begin{equation}}
\def\eeq{\end{equation}}
\def\wh{\widehat}
\def\lam{\lambda}
\def\nula{\nu_\lam}
\def\nulahat{\wh{\nu}_\lam}
\def\shalf{{\textstyle{\half}}}
\newcommand{\floor}[1]{\lfloor#1\rfloor}
\providecommand{\norm}[1]{\lvert\lvert #1 \rvert\rvert}
\begin{document}
	
	\title[Q.W.M. for Salem substitutions]{Quantitative weak mixing for typical Salem substitution suspension flows}

	\author{Juan Marshall-Maldonado}
	\address{Juan Marshall-Maldonado\\ Faculty of Mathematics and Computer Science, Nicolaus
		Copernicus University, ul. Chopina 12/18, 87-100, Toru\'n, Poland}
	\email{jgmarshall21@gmail.com}
	
	\author{Boris Solomyak}
	\address{Boris Solomyak\\ Department of Mathematics,
		Bar-Ilan University, Ramat-Gan, Israel}
	\email{bsolom3@gmail.com}
	
	\thanks{The research of J.M. was supported by UMK grant (Poland) for young researchers from abroad 2024/2025, Excellence Center:
``Dynamics, Mathematical Analysis and Artificial Intelligence''. }
	\thanks{The research of B.S. was supported by the Israel Science Foundation grant \#1647/23.}
	
	\begin{abstract}
		The paper investigates quantitative weak mixing of Salem substitutions flows. We prove that for a substitution whose substitution matrix is 
		irreducible over the rationals and the dominant eigenvalue is a Salem number, for almost every suspension flow with a piecewise constant roof function,
		quantitative weak mixing holds with a rate that is slightly worse than a
		power of $\log\log$. We do not know if this is sharp, but we do show that
		 for any suspension flow of this kind, quantitative weak mixing with a polynomial rate is impossible. Results for specific systems are often much weaker than for
		 ``typical'' or ``generic'' ones. In the Appendix we explain how a minor modification of an argument from Bufetov and Solomyak (2014) yields very weak, but
		 nevertheless quantitative weak mixing estimates of $\log^*$ type for the {\em self-similar} suspension flow over a Salem substitution. 
		 Simultaneously this provides first quantitative decay rates for the Fourier transform of Salem Bernoulli convolutions.
	\end{abstract}
	
	\date{\today}

	\keywords{Substitution dynamical system; Salem numbers; quantitative weak mixing; Bernoulli convolutions}
	\subjclass[2020]{11K16, 37A30, 37B10, 47A11}

	\maketitle

	\thispagestyle{empty}
	
	\section{Introduction}
	
	In this paper we study ergodic-theoretic and spectral properties of substitution dynamical systems, more specifically, suspension flows over Salem type
	substitutions. Substitutions and associated dynamical systems appear in many different contexts, in particular, in the study of interval exchange 
	transformations with a periodic Rauzy induction, translation flows along stable/unstable foliations of pseudo-Anosov diffeormorphisms, see \cite{CGM}.
	as well as in 
	mathematics of quasicrystals and ``aperiodic order,'' see \cite{BaGr}.
	
	The basic definitions on substitutions may be found in the standard references \cite{Siegel,Queff}. Consider an alphabet with $d \geq 2$ symbols, denoted by $\A = \{1, \dots, d\}$. Let $\A^+$ represent the set of all nonempty words composed of letters from $\A$.  
	A {\em substitution} is a mapping $\zeta: \A \to \A^+$, which extends naturally to $\A^+$ and $\A^{\Z}$ by concatenation. The {\em substitution space} consists of all bi-infinite sequences $x \in \A^\Z$ such that every word appearing in $x$ is a subword of $\zeta^n(a)$, for some $a \in \A$ and some $n \in \N$. The {\em substitution dynamical system} is given by the left shift on $\A^\Z$, restricted to $X_\zeta$, and is denoted by $T$.  
	
	The {\em substitution matrix} $\Sf_\zeta = (\Sf(i,j))$ is the $d \times d$ matrix where $\Sf(i,j)$ represents the number of times the symbol $i$ appears in $\zeta(j)$. The substitution is called {\em primitive} if there exists some $n \in \N$ such that all entries of $\Sf_\zeta^n$ are strictly positive.  
	It is well known that the $\Z$-action given by the left-shift on a primitive substitution subshift is minimal and uniquely ergodic; see \cite{Queff}. We assume that the substitution is both primitive and {\em non-periodic}, which, in the primitive case, is equivalent to the space $X_\zeta$ being infinite. 
	
	The continuous counterpart comes from considering suspensions over $X_\zeta$.
	Namely, we consider $\R$-actions obtained as
	suspension (or special) flows over $(X_\zeta,T)$, with a piecewise-constant roof function depending only on the 1st symbol. Such a function
	may be identified with a positive {\em suspension vector} $\vec s\in \R^d_+$, see the next section for details.
	Substitution $\R$-actions of this kind appear in the study of translation flows and as tiling dynamical systems on the line.
	Special choices for the suspension vector are $\vec{s} = {\bf 1}$ (the vector of $1$'s), and $\vec{s} =$ the right PF (Perron--Frobenius) eigenvector of $\Sf_\zeta^{\sf T}$. With the first choice, we essentially recover the substitution subshift. The second choice is referred to as the {\em self-similar} suspension flow.
	
	By a result of Dekking--Keane \cite{DK} and Clark--Sadun \cite{CS03}, substitution systems and their suspensions are never strongly
	mixing, however, they may be weakly mixing, see \cite{FMN} for an algebraic characterization. Here our focus is on the property of
	{\em quantitative weak mixing} (QWM), which has been intensively studied recently.
	
	Weak mixing is a fundamental property of measure-preserving systems; it has several equivalent characterizations, see, e.g., \cite{EW}. One of them is absence of non-trivial discrete spectrum; equivalently, the spectral measure $\sig_f$ corresponding to any $L^2$-function of mean zero has no point masses.
Given a weakly mixing system, it is natural to ask what is the rate of decay of the spectral measure of a small neighborhood of a point, as the radius tends to zero, for a 
sufficiently ``nice'' test function $f$. Another 
characterization of weak mixing is in terms of absolute Ces\`aro averages: a probability preserving flow $(X, \phi_t, \mu)$ is weakly mixing if and only if
$$
\lim_{R\to \infty} \frac{1}{R} \int_0^R \left|\langle f\circ \phi_t, g\rangle \right|^2 dt \to 0,\ \ \mbox{for all}\ f\in L^2_0(X), \ g\in L^2(X).
$$
The flow is said to have the property of  
QWM with a rate $h$ for some increasing function $h:[0,+\infty)\to [0,+\infty)$, $h(0)=0$, if for all $f\in L^2_0(X)$ sufficiently ``nice'' and
$g\in L^2(X)$,
\beq \label{def:QWM}
\frac{1}{R}\int_0^R \left|\langle f\circ \phi_t, g\rangle \right|^2 dt \le C_{f,g}\cdot h(R^{-1}),
\eeq
see \cite{Knill98}.
If this holds with $h(r) = r^\gam$, we say that QWM holds with a H\"older rate $\gam$, for $h(r) = (\log(1/r))^{-\gam}$ we say it is $\log$-H\"older, etc.

\begin{remark} \label{rem:QWM}
It is known that spectral bounds of the form
\beq \label{spec-bound}
\sig_f(B_r(\om))\le C_f h(r), \ \ \om\in \R,\ r\in (0, r_0),\ \ \int f\,d\mu=0,
\eeq
imply QWM with the same rate, for the same $f$. (It is essential that \eqref{spec-bound} be {\bf uniform in the spectral parameter} $\om$.)
This was shown by Strichartz \cite{Stri90} for $h(r) = r^\alpha$, and extended by Last \cite{Last96}; see
Knill \cite{Knill98}. In the reverse direction, it was verified in \cite{Last96,Knill98} that QWM with the rate $h^2$ implies \eqref{spec-bound}.
\end{remark}

	
Let us return to the primitive substitution $\zeta$.	
	The Perron-Frobenius (PF) eigenvalue of $\Sf_\zeta$,
	which we denote by $\alpha$, plays an important role. Assume, for simplicity, that $\Sf_\zeta$ is irreducible over $\Q$. If $\alpha$ is a {\em Pisot number},
	which means that all the other eigenvalues of $\Sf_\zeta$ are inside the open unit disk, then the substitution $\Z$-action and suspension flows over it have a
	large discrete component, hence not weakly mixing, see \cite{CS03} (conjecturally, they are {\em pure discrete}, see \cite{ABBLS}).
	The case when the second-largest eigenvalue satisfies $|\alpha_2|>1$ was studied in \cite{BuSo14,BuSo18b}: 
	under this assumption, QWM with a H\"older rate was obtained for Lebesgue almost every suspension vector $\vec s$. 
	(There it was assumed that the test function is a ``cylindrical function of level zero,'' see the next section for definitions, which is, in a sense, the ``nicest'' function possible. In fact, this can be upgraded to the case of ``weakly Lipschitz'' test functions, see \cite{BuSo21}.) In \cite{BuSo18} and \cite{BuSo20} analogous results were
	obtained for almost every translation flow on a flat surface of genus 2. Forni \cite{Forni22}, 
	using a different method, obtained H\"older QWM for translation flows for an arbitrary genus $\ge 2$. After seeing his preprint, Bufetov and the second-named
	author obtained in \cite{BuSo21} similar results, using a ``symbolic approach''. More recently, Avila, Forni, and Safaee \cite{AFS} established H\"older QWM
	for almost all interval exchange transformations (IET's) of non-rotation type. Significantly,
	 all of these results on QWM rely on the second Lyapunov exponent being positive, either
	for the Kontsevich-Zorich, or for the Rauzy-Zorich cocycle --- the property which corresponds to $|\alpha_2|>1$ in the substitution case.
	
	There are some isolated results which do not fall into the above scheme: Moll \cite{Moll23} obtained power-log QWM for a substitution conjugate to the 
	Chacon map, for which $\Sf_\zeta$ has an eigenvalue 1; in the IET setting this is related to another result in \cite{AFS}, namely, power-log QWM for
	almost all rotation class IET's that are not rotations.
	
	In this paper we obtain first quantitative estimates for suspension flows over {\em Salem type} substitutions, that is, when 
	$\alpha$ is a Salem number. Recall
	that this means $\alpha$ is an algebraic integer, whose conjugates satisfy $|\alpha_j|\le 1$, with at least one of them on the unit circle.
	This is a  ``borderline'' case, interesting from several points of view and generally more difficult to analyze. Our main result, which we state
	precisely in the next section, asserts that for Lebesgue a.e.\ suspension flow over a Salem type substitution, holds QWM with a rate that is a
	power of $\log\log$, up to a $\log\log\log$ correction, namely,  with a function
	\beq\label{def:hrate}
	h_\beta(r)  := \exp\left(-\frac{\beta \log\log\log_\alpha (1/2r)}{\log\log\log \log_\alpha (1/2r)}\right).
	\eeq
	Here ``$\log$'' denotes the logarithm to base 2. On the other hand, we show that for {\em any} suspension vector $\vec s$, the resulting $\R$-action cannot
	have H\"older QWM.
	
	As a rule, it is much harder to obtain results for a particular system than for a ``typical'' or ``generic'' one. In the case of a specific substitution with
	$|\alpha_2|>1$ it was shown in \cite{BuSo14} that QWM holds with a log-H\"older rate for the self-similar suspension flow. Recently, analogous results were
	obtained for the substitution $\Z$-action, see \cite{BuMMSo25}. For Salem type substitutions, we are not aware of any QWM results in the literature.
	Considering the self-similar suspension flow over a Salem substitution, the first-named author obtained in \cite{Marshall20} H\"older spectral bounds of the form \eqref{spec-bound}, with $h(r) = r^{\gam(\om)}$, but only for algebraic spectral	parameters $\om\in \Q(\alpha)$ and with $r_0=r_0(\om)$. This was extended
	to the case of $\Z$-actions in \cite{BuMMSo25} as well.
	These bounds are, unfortunately, not strong enough to yield any QWM rate.
	It turns out that a very weak, $\log^*$ QWM rate for the self-similar suspension flow may be deduced essentially by the same method as in \cite{BuSo14}.
	Although we  do not expect it to be sharp by any means, arguably, any quantitative rate is better than no rate. We sketch the derivation of this
	result in the 
	Appendix.
	
	A connection between spectral bounds for substitution flows and the decay rate of the Fourier transform for Bernoulli convolution measures (a much-studied
	class of self-similar ``fractal'' measures on the line, see \cite{sixty,Varju}),  was pointed out
	 in \cite{BuSo14}. In the Appendix we recall this connection and note that the proof of QWM with a $\log^*$ rate also yields
	$\log^*$ Fourier decay for Salem Bernoulli convolutions.
	
	The rest of the paper is organized as follows. In the next section we present the background in more detail and state our main result. In Section 3 we prove the
	main theorem, and in Section 4 we show the absence of uniform H\"older bounds. We conclude in the Appendix as mentioned above.	

	
\section{Background and statement of results}
	\subsection{Suspension flows.} 
Let $\zeta$ be a primitive non-periodic substitution on $\A = \{1, \dots, d\}$, with a substitution matrix $\Sf_\zeta$.
 We obtain the substitution dynamical system $(X_\zeta,T)$, which is minimal and uniquely ergodic, with a invariant Borel probability measure
 $\mu$. Recall the definition of suspension flows
Let $\vec{s} = (s_1,\dots,s_d) \in \R_+^d$ and $F: X_\zeta \times \R \longrightarrow X_\zeta \times \R$ defined by $F(x,t) = (T(x),t-s_{x_0})$. The {\em suspension flow with the roof vector} $\vec{s}$, is defined as
	\[
	X^{\vec{s}}_{\zeta} = (X_\zeta \times \R) /\sim
	\] 
	where $\sim$ is the equivalence relation defined by $(x,t) \sim (x',t')$ if and only if $F^n(x,t) = (x',t')$, for some $n \in \Z$. There is a natural $\R$-action on $X^{\vec{s}}_{\zeta}$ given by the ``vertical" flow $(\phi_t)_{t\in\R}$: $\phi_t(x,t') = (x,t'+t)\, (\text{mod}\sim)$. From now on we identify $X^{\vec{s}}_{\zeta}$ with the {\em fundamental domain} of points $(x,t)\in X_\zeta \times \R$ such that $0\leq t < s_{x_0}$.
The $\R$-action $(	X^{\vec{s}}_{\zeta}, \phi_t)$ is uniquely ergodic as well, with the unique invariant Borel probability measure $\wt \mu_{\vec s}$
 defined,
up to normalization, as
$\mu|_{[a]}\times \Lk^1|_{[0,s_a]}$, when restricted to $[a]\times [0, s_a]$,
where $\Lk^1$ is the Lebesgue measure on the line, $a\in \A$, and $[a]$ denotes the cylinder set.

	We say that a function $f\in L^2(X_\zeta,\mu)$ is {\em cylindrical of level 0} if it depends only on $x_0$, the 0-th term of the sequence $x\in X_\zeta$. Cylindrical functions of level 0 form a $d$-dimensional vector space, with a basis
	$\{\One_{[a]}: \, a\in \Ak\}$. Denote by $\Cyl(X_\zeta)$ the space of cylindrical functions of level zero.
{\em Lip-cylindrical functions} for a suspension flow are defined by
\be \label{eq-cylinder}
f(x,t) = \sum_{j\in \A} \One_{[j]} \cdot \psi_j(t),\ \ \ \mbox{where}\ \ \psi_j \ \mbox{is Lipschitz on}\ [0,s_j].
\ee
Denote by $\text{Lip}(X^{\vec{s}}_\zeta)$ the space of Lip-cylindrical functions on $X^{\vec{s}}_\zeta$.


	\subsection{Spectral theory: general flows} Let $(X,\phi_t,\nu)$ be any probability-preserving $\R$-action. 
		Recall that for $f,g\in L^2(X,\nu)$ the (complex) spectral measure $\nu_{f,g}$ on $\R$ is determined by the Fourier transform:
	\begin{eqnarray*}
	\widehat{\nu}_{f,g}(-t) = \int_{\R} e^{2\pi i t \om}\,d\nu_{f,g}(\om)=
		\langle f\circ \phi_t\, , g\rangle,\quad t\in \R,
	\end{eqnarray*}
	where $\langle \cdot,\cdot \rangle$ denotes the scalar product in $L^2(X,\nu)$. We write $\nu_f = \nu_{f,f}$. 
		
	Closely related to spectral measures  are {\em twisted Birkhoff integrals}. For  $R\geq 1$, $f\in L^2(X,\nu)$,
	$\om\in \R$, and $y\in X$, it is  defined as
	\[
	S^f_R(y,\om) = \int_0^R f(\phi_t y) e^{2\pi i t \om} dt.
	\]
	
	Set $G_R(f,\om) = \dfrac{1}{R}\|{S^f_R(\cdot,\omega)}\|^2_{_{L^2(\nu)}}$.
	The next lemma is essentially from \cite{Hof}, see \cite[Lemma 4.3]{BuSo14} for a proof.
	
	\begin{lemma} \label{lem-easy1}
		For all $\om\in\R$ and $r\in (0,\half]$ we have
		\be \label{eq-estim1}
		\sig_f(B_r(\om))\le \frac{\pi^2}{4R} G_R(f,\om),\ \ \ \mbox{with}\ \ R = (2r)^{-1}.
		\ee
	\end{lemma}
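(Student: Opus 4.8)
The plan is to rewrite $G_R(f,\om)$ as an integral of a nonnegative Fej\'er-type kernel against the spectral measure $\sig_f$, and then bound that kernel from below on $B_r(\om)$. First I would expand
\[
\|S^f_R(\cdot,\om)\|^2_{L^2(\nu)}=\int_X\int_0^R\int_0^R f(\phi_t y)\overline{f(\phi_s y)}\,e^{2\pi i(t-s)\om}\,ds\,dt\,d\nu(y),
\]
which is legitimate to manipulate since the triple integrand is absolutely integrable over $X\times[0,R]^2$ (by Cauchy--Schwarz in $y$ it is dominated by $R^2\|f\|_2^2$). Applying Fubini and the $\phi_t$-invariance of $\nu$ collapses the $X$-integral to $\langle f\circ\phi_{t-s},f\rangle$, and then the defining relation $\langle f\circ\phi_u,f\rangle=\int_\R e^{2\pi i u\xi}\,d\sig_f(\xi)$ together with one more interchange of order of integration (again justified because the integrand has modulus $\le 1$ on a space of total mass $R^2\sig_f(\R)=R^2\|f\|_2^2$) yields, after performing the $(t,s)$-integration,
\[
G_R(f,\om)=\frac1R\|S^f_R(\cdot,\om)\|^2=\int_\R K_R(\xi-\om)\,d\sig_f(\xi),\qquad K_R(u):=\frac1R\Bigl|\int_0^R e^{2\pi i tu}\,dt\Bigr|^2=R\Bigl(\frac{\sin(\pi R u)}{\pi R u}\Bigr)^2 .
\]
(The precise placement of $\om$ versus $-\om$ depends on the sign conventions for $S^f_R$ and for $\nu_{f,g}$; since $K_R$ is even this is harmless.)

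Next, since $K_R\ge 0$ everywhere and $\sig_f\ge 0$, discarding the contribution of $\R\setminus B_r(\om)$ gives $G_R(f,\om)\ge\int_{B_r(\om)}K_R(\xi-\om)\,d\sig_f(\xi)$. The last step is the pointwise estimate of $K_R$ on the window $|\xi-\om|\le r$: with the prescribed $R=(2r)^{-1}$ one has $|\pi R(\xi-\om)|\le\pi/2$ there, and since $x\mapsto(\sin x)/x$ decreases on $(0,\pi/2]$ to the value $2/\pi$ (Jordan's inequality), one gets $K_R(\xi-\om)\ge R\,(2/\pi)^2=4R/\pi^2$ for every $\xi\in B_r(\om)$. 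Hence $G_R(f,\om)\ge(4R/\pi^2)\,\sig_f(B_r(\om))$, which is exactly \eqref{eq-estim1} after dividing by $4R/\pi^2$.

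I do not expect a genuine obstacle: the lemma is essentially a one-line Fourier computation once the right kernel is identified. The only points demanding a little care are the two appeals to Fubini--Tonelli (both easy, because $\sig_f$ is a \emph{finite} positive measure of total mass $\|f\|_2^2$ and the time integrals run over the compact interval $[0,R]$), and keeping track of the sign conventions so that the kernel is centred at $\om$ as the statement demands. The quantitative heart of the matter is the sharp bound $(\sin x)/x\ge 2/\pi$ on $[0,\pi/2]$, which both fixes the constant $\pi^2/4$ and dictates the calibration $R=(2r)^{-1}$ between the averaging length and the radius.
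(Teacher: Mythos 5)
Your argument is correct and is exactly the standard Fej\'er-kernel computation behind the paper's proof, which is not reproduced there but only cited from \cite{Hof} and \cite[Lemma 4.3]{BuSo14}: expand $\|S^f_R(\cdot,\omega)\|_{L^2}^2$, use invariance and the spectral measure to get $G_R(f,\omega)=\int K_R\,d\sig_f$ with $K_R(u)=R\bigl(\sin(\pi R u)/(\pi R u)\bigr)^2$, and bound $K_R\ge 4R/\pi^2$ on the window of radius $r=(2R)^{-1}$ via $\sin x/x\ge 2/\pi$ on $(0,\pi/2]$. The only caveat is the one you flag: with the paper's stated conventions ($e^{2\pi i t\omega}$ in both $S^f_R$ and the definition of $\nu_{f,g}$) the kernel comes out centred at $-\omega$ rather than $\omega$, which is a harmless sign-convention discrepancy (inherited from the cited source) rather than a gap in your proof.
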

	
	\subsection{Spectral theory: substitution flows}
	A useful general bound for twisted Birkhoff integrals in the context of substitution $\R$-actions was obtained in \cite[Prop.\,4.4]{BuSo14} and
	\cite[Prop.\,3.2(ii)]{BuSo18b}. In order to state it, we need to introduce another definition.

	A word $v\in \Ak^+$ is called a {\em return word} for $\zeta$ if $v$ starts with a letter $c$ for some $c\in \Ak$ and $vc$ is admissible for the language of $X_\zeta$. From the uniform recurrence of $(X_\zeta,T)$ it follows that the set of {\em irreducible} return words (i.e., those which cannot be
	split into a concatenation of shorter return word) is finite and non-empty. 
		A word	 $v$ is called a {\em good return word} for the substitution $\zeta$ if $v$ starts with some letter $c\in \Ak$ and
		$vc$ occurs as a subword in $\zeta(b)$ for every $b\in \Ak$. Denote the set of good return words by $GR(\zeta)$.
		Denote by 
	$\ell(v) = [\ell(v)_j]_{j\le d}$ the {\em population vector} of $v\in \Ak^+$, where $\ell(v)_j$ is the number of letters $j$ in $v$.
	
	\begin{prop}[{\cite[Prop.\,3.2(ii)]{BuSo18b}}]          \label{prop:prod}
		There exist $\lambda \in (0, 1)$, $C_1>0$ and depending only on the substitution $\zeta$, such that for all $f\in \Lip(\Xxi)$,
		\[
		\left|S^f_R((x,t),\omega)\right| \leq C_1\|f\|_\infty  \cdot \min\{1, |\om|^{-1}\} \cdot R\prod_{n=0}^{\floor{\log_{\alpha}R}} \left( 1-\lambda \max_{v\in GR(\zeta)}{\big\| \bigl\langle {(\Sf_\zeta^{\sf T})}^n \om \vec s, \ell(v)\bigr\rangle\bigr\|}^2_{_{\R/\Z}}\right) ,
		\]
		for all $R>1$, $(x,t) \in \Xxi$, and $\omega\in\R$.
	\end{prop}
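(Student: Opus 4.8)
\emph{Cocycle reduction.} The plan is to realize $S^f_R$ as a matrix cocycle over $\zeta$ and to run a ``Riesz-product''-type estimate with one contraction factor per hierarchical scale, the gain at scale $n$ coming from the two forced occurrences of a good return word. For $f$ as in \eqref{eq-cylinder} put $\Psi_a(\om):=\int_0^{s_a}\psi_a(u)e^{2\pi i\om u}\,du$, $\Psi^{(0)}(\om):=(\Psi_a(\om))_{a\in\A}$, and define the level-$n$ block integrals $\Psi^{(n)}(\om)$ by the recursion
\[
\Psi^{(n)}_b(\om)=\sum_{i=1}^{m_b}\exp\!\Big(2\pi i\,\big\langle (\Sf_\zeta^{\sf T})^{\,n-1}\om\vec s,\ \ell(c_1\cdots c_{i-1})\big\rangle\Big)\,\Psi^{(n-1)}_{c_i}(\om),\qquad \zeta(b)=c_1\cdots c_{m_b},
\]
so that $\Psi^{(n)}_b(\om)$ is precisely the twisted integral accumulated when the flow crosses the level-$n$ tower over $b$ from bottom to top. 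Integration by parts (valid since $f$ is Lip-cylindrical) gives the base estimate $\|\Psi^{(0)}(\om)\|_\infty\le C_0\|f\|_\infty\min\{1,|\om|^{-1}\}$. By recognizability of the primitive non-periodic $\zeta$, an orbit segment of length $R$ reads a word which, via a Dumont--Thomas decomposition, splits into $O(1)$ complete blocks of each level $0\le n\le M{+}1$ with $M:=\floor{\log_\alpha R}$, plus a fractional-tower remainder of combinatorial length $O(1)$; summing the corresponding contributions and bounding the remainder by $C\|f\|_\infty\min\{1,|\om|^{-1}\}$ yields
\[
\big|S^f_R((x,t),\om)\big|\ \le\ C\sum_{n=0}^{M+1}\big\|\Psi^{(n)}(\om)\big\|_\infty\ +\ C\,\|f\|_\infty\min\{1,|\om|^{-1}\}.
\]

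\emph{Per-scale contraction.} The core claim is that there are $\lam\in(0,1)$ and $n_0\in\N$, depending only on $\zeta$, such that
\[
\big\|\Psi^{(n)}(\om)\big\|_\infty\ \le\ \alpha\Big(1-\lam\max_{v\in GR(\zeta)}\big\|\big\langle(\Sf_\zeta^{\sf T})^{\,n-1}\om\vec s,\ \ell(v)\big\rangle\big\|_{\R/\Z}^2\Big)\big\|\Psi^{(n-1)}(\om)\big\|_\infty\qquad(n\ge n_0).
\]
For $v=v_1 w\in GR(\zeta)$ the word $vv_1$ occurs in every $\zeta(b)$, so the sum defining $\Psi^{(n)}_b$ contains two terms equal to $\Psi^{(n-1)}_{v_1}$ whose phases differ by $\theta^{(v)}_{n-1}:=\langle(\Sf_\zeta^{\sf T})^{\,n-1}\om\vec s,\ \ell(v)\rangle$; pairing them, using $|1+e^{2\pi i\theta}|=2|\cos\pi\theta|\le 2(1-2\|\theta\|_{\R/\Z}^2)$, and choosing the best $v$ gives the pointwise estimate
\[
\big|\Psi^{(n)}_b(\om)\big|\ \le\ \big(\Sf_\zeta^{\sf T}\,|\Psi^{(n-1)}(\om)|\big)_b-4\max_{v}\big\|\theta^{(v)}_{n-1}\big\|_{\R/\Z}^2\,\big|\Psi^{(n-1)}_{v_1}(\om)\big| ,
\]
and passing to the norm weighted by the right Perron--Frobenius eigenvector of $\Sf_\zeta$ turns $\Sf_\zeta^{\sf T}$ into multiplication by $\alpha$. \textbf{The main obstacle} is that the ``defect'' only sees the single coordinate $\Psi^{(n-1)}_{v_1}$, while one needs it comparable to the whole norm $\|\Psi^{(n-1)}\|_\infty$; because of cancellation between coordinates this fails one scale at a time. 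The resolution --- the technical heart of \cite{BuSo14,BuSo18b} --- is to work on windows of $n_0$ consecutive scales: primitivity of $\Sf_\zeta$ makes a bounded product of the cocycle matrices entrywise dominated by a strictly positive power of $\Sf_\zeta^{\sf T}$, which lets one either recover comparability of $\Psi^{(m)}_{v_1}$ with $\|\Psi^{(m)}\|_\infty$ at some scale $m$ in the window (so the defect is usable there, while on the other scales the trivial bound $|\Psi^{(n)}_b|\le(\Sf_\zeta^{\sf T}|\Psi^{(n-1)}|)_b$ is enough because the suppressed factors are $\approx1$) or else propagate the gain through the positive cone. Iterating over windows yields the displayed inequality, with a possibly smaller $\lam$.

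\emph{Assembling the product.} Iterating the per-scale inequality over $n=n_0,\dots,M{+}1$, bounding $\|\Psi^{(n_0-1)}(\om)\|_\infty\le C\alpha^{\,n_0-1}\|\Psi^{(0)}(\om)\|_\infty$ trivially, and inserting $\|\Psi^{(0)}(\om)\|_\infty\le C_0\|f\|_\infty\min\{1,|\om|^{-1}\}$, one gets $\|\Psi^{(M+1)}(\om)\|_\infty\le C\,\|f\|_\infty\min\{1,|\om|^{-1}\}\,\alpha^{\,M+1}\prod_{j=n_0-1}^{M}(1-\lam\max_v\|\theta^{(v)}_j\|_{\R/\Z}^2)$, and similarly for each smaller level, so by Step~1,
\[
\big|S^f_R((x,t),\om)\big|\ \le\ C\,\|f\|_\infty\min\{1,|\om|^{-1}\}\Big(\sum_{n=0}^{M+1}\alpha^{\,n}\!\!\prod_{j=n_0-1}^{\,n-1}\!\!\big(1-\lam\max_v\|\theta^{(v)}_j\|_{\R/\Z}^2\big)+1\Big).
\]
Every factor lies in $[1-\lam/4,1]$, so: (i) reinstating the missing low-order factors $j<n_0-1$ costs only the fixed constant $(1-\lam/4)^{-(n_0-1)}$; (ii) the $n$-th summand is at most $(\alpha(1-\lam/4))^{\,n-(M+1)}$ times the $(M{+}1)$-st, so for $\lam$ small enough that $\alpha(1-\lam/4)>1$ the sum is a geometric series dominated by its last term, which is $\asymp R\prod_{n=0}^{M}(1-\lam\max_v\|\theta^{(v)}_n\|_{\R/\Z}^2)$ since $\alpha^{M+1}\asymp R$; (iii) the ``$+1$'' is then $\le C_1^{-1}R\prod_{n=0}^{M}(\cdots)$ once $R>R_0(\zeta)$, while $1<R\le R_0$ is trivial. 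This is the asserted bound, with $C_1$ depending only on $\zeta$.
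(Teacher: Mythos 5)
Your overall architecture is the right one and matches the argument behind the cited result (note the paper itself does not prove this proposition: it is quoted from \cite{BuSo18b}, so the comparison is with that standard proof): decompose the orbit segment hierarchically into towers, set up the recursion $\Psi^{(n)}_b(\om)=\sum_i e^{2\pi i\langle(\Sf_\zeta^{\sf T})^{n-1}\om\vec s,\,\ell(c_1\cdots c_{i-1})\rangle}\Psi^{(n-1)}_{c_i}(\om)$, gain one factor per scale from the two forced occurrences of a good return word via $|1+e^{2\pi i\theta}|\le 2-4\|\theta\|^2_{\R/\Z}$, and sum over levels, the sum being dominated by the top level once $\lam$ is small enough that $\alpha(1-\lam/4)>1$. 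Steps 1 and 3 of your proposal are fine.

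The genuine gap is Step 2, which is the heart of the proposition and which you do not prove: you state the per-scale contraction, declare the coordinate-mismatch issue to be ``the technical heart of \cite{BuSo14,BuSo18b}'', and then gesture at a resolution by ``windows of $n_0$ consecutive scales'' and an unspecified dichotomy (``either recover comparability \dots or else propagate the gain through the positive cone''). As written this is an appeal to the literature, not an argument, and the sketched mechanism is not formulated precisely enough to check (in the non-comparable case it is not explained where the gain of size $\lam\|\theta\|^2$ comes from). Moreover, the obstacle you single out dissolves by the standard one-line device, with no windowing or use of primitivity beyond positivity of the Perron--Frobenius eigenvector: from the pairing one gets the \emph{entrywise} bound $|\M^{(n)}|\le \Sf_\zeta^{\sf T}-4\|\theta^{(v)}_{n-1}\|^2_{\R/\Z}E_{v_1}$, where $[\M^{(n)}]_{b,c}=\sum_{i:\,c_i=c}e^{2\pi i\phi_i}$ and $E_{v_1}$ has a full column of ones in column $v_1$ \emph{for every row} $b$, precisely because $vv_1$ occurs in $\zeta(b)$ for every $b$. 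Bounding $|\Psi^{(n-1)}|$ coordinatewise by $\|\Psi^{(n-1)}\|_{*}\,\vec r$ (with $\vec r>0$ the right PF eigenvector of $\Sf_\zeta^{\sf T}$ and $\|\cdot\|_{*}$ the corresponding weighted norm) \emph{before} applying the nonnegative matrix $\Sf_\zeta^{\sf T}-4\|\theta\|^2E_{v_1}$, the defect multiplies $r_{v_1}>0$ rather than the possibly tiny $|\Psi^{(n-1)}_{v_1}|$, and $\bigl(\Sf_\zeta^{\sf T}-4\|\theta\|^2E_{v_1}\bigr)\vec r=\alpha\vec r-4\|\theta\|^2 r_{v_1}\mathbf{1}\le\alpha(1-\lam\|\theta\|^2)\vec r$ with $\lam=\min\{\tfrac12,\,4\min_a r_a/(\alpha\max_b r_b)\}$. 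This gives the contraction at \emph{every} scale (taking at each scale the maximizing $v\in GR(\zeta)$), after which your assembly step is correct. So the skeleton is right, but the central inequality of the proof is left unestablished in your write-up, and the difficulty you flag as needing a multi-scale argument is in fact handled scale by scale.
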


	(In \cite[Prop.\,3.2(ii)]{BuSo18b} the product is up to $\floor{\log_\alpha R}-C_2$, for some $C_2>0$ depending on $\zeta$, but this can be 
	``absorbed'' into $C_1$ since $\lam<1$ and $\|z\|\le \half$ for all $z\in \R$.)

		Consider the Abelian group $\Gam$ (a subgroup of $\Z^d)$ generated by the population vectors of all return words for $X_\zeta$.
		We have $\Sf_\zeta\Gam\subset \Gam$,
		hence the real span of $\Gam$ is a rational $\Sf_\zeta$-invariant subspace. By assumption, $\Sf_\zeta$ is irreducible, hence $\Gam$ has full rank, i.e.,
		 it is a lattice. Passing to a sufficiently high power of the substitution, which does not change the substitution space,
		 we can assume that every ``elementary'' return word (i.e., not a concatenation
		 of shorter return words) is a good return
		 word, and we obtain from Proposition~\ref{prop:prod}, combined with \cite[Lemma 4.2]{BuMMSo25}:
		 \beq\label{eq:product}	\left|S^f_R((x,t),\omega)\right| \leq C_1\|f\|_\infty  \cdot \min\{1, |\om|^{-1}\}
		 \cdot R\prod_{n=0}^{\floor{\log_{\alpha}R}} \left( 1-\lambda \cdot
		\bigl\|(\Sf_\zeta^{\sf T})^n \om \vec s\bigr\|^2_{_{\R^d/\Gam^*}}\right),\ R>1.
		 \eeq

\subsection{Statement of results} Let $\zeta$ be a substitution of Salem type, i.e., the PF eigenvalue of the substitution matrix $\Sf_\zeta$
is a Salem number and $\Sf_\zeta$ is irreducible over $\Q$. Our main result is as follows.

\begin{theorem}\label{th:main}
There exists $\beta = \beta(\zeta)>0$ such that 
for Lebesgue-a.e.\ $\vec s\in \R^d_+$, for every Lip-cylindrical function $f$ of mean zero, the spectral measure
$\sig_f$ of the suspension flow $(\Xxi, \phi_t,\wt\mu_{\vec s})$ satisfies
\beq \label{eq:main}
\sig_f(B_r(\om)) \le \wt C_{\zeta,\vec s} \|f\|^2_\infty \cdot h_\beta(r),\ \ r>0,\ \ \ \mbox{where}\ \ 
h_\beta(r)  = \exp\left(-\frac{\beta \log\log\log_\alpha (1/2r)}{\log\log\log \log_\alpha (1/2r)}\right),
\ee
for all $\om\in \R$. Thus the system has QWM with the rate $h_\beta(r)$ for such functions $f$.
\end{theorem}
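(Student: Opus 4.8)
The plan is to convert the desired spectral bound into a quantitative lower bound for a twisted Birkhoff sum, and then to bound the Lebesgue measure of the set of ``bad'' roof vectors by an Erd\H{o}s--Kahane type argument. For $r\in(0,\half]$ put $R=(2r)^{-1}$, $N=\floor{\log_\alpha R}$, $\Phi(N):=\dfrac{\log\log N}{\log\log\log N}$, and
\[
\Sigma_N(\vec v)\ :=\ \sum_{n=0}^{N}\bigl\|(\Sf_\zeta^{\sf T})^n\vec v\bigr\|^2_{\R^d/\Gam^*},\qquad \vec v\in\R^d .
\]
Since $(\Sf_\zeta^{\sf T})^n(\om\vec s)=\om\,(\Sf_\zeta^{\sf T})^n\vec s$, the product in \eqref{eq:product} is governed by $\Sigma_N(\om\vec s)$; applying $1-\lam x\le e^{-\lam x}$ to \eqref{eq:product} and then Lemma~\ref{lem-easy1} gives, for all $\om$ and all $r\in(0,\half]$,
\[
\sig_f(B_r(\om))\ \le\ \tfrac{\pi^2C_1^2}{4}\,\|f\|_\infty^2\,\min\{1,|\om|^{-2}\}\;e^{-2\lam\,\Sigma_N(\om\vec s)} .
\]
Because $\log(1/h_\beta(r))$ is of order $\log\log\log_\alpha R/\log\log\log\log_\alpha R\asymp\Phi(N)$, the theorem reduces to the following: there is $c_\zeta>0$ such that for Lebesgue-a.e.\ $\vec s\in\R^d_+$ there is $C_{\vec s}$ with $\Sigma_N(\om\vec s)\ge c_\zeta\,\Phi(N)-C_{\vec s}$ for all $\om>0$ and all large $N$; one then takes $\beta$ small in terms of $c_\zeta,\lam$ and absorbs finitely many $N$ and the range $r>\half$ into $\wt C_{\zeta,\vec s}$. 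Two simplifications trim the $\om$-range: for $|\om|\ge h_\beta(r)^{-1/2}$ the factor $\min\{1,|\om|^{-2}\}$ alone already yields \eqref{eq:main}; and rescaling $\vec s\mapsto\rho\vec s$ conjugates the suspension flow to a constant time rescaling, which multiplies the frequency variable of the spectral measure by a constant, so (using that for a.e.\ $\vec s$ the ray $\R_{+}\vec s$ meets the eventual good set in a set of full one-dimensional, hence dense, measure) it suffices to control $\Sigma_N(\om\vec s)$ for $\om$ in a fixed compact interval, say $[1,\alpha]$.

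The second step is a dynamical reformulation. Since $\Sf_\zeta\Gam\subset\Gam$, the transpose $\Sf_\zeta^{\sf T}$ descends to an endomorphism $A$ of the $d$-torus $\Tt^d:=\R^d/\Gam^*$, in fact to an automorphism: the characteristic polynomial of $\Sf_\zeta$ is the minimal polynomial of the Salem number $\alpha$ ($\Sf_\zeta$ being irreducible over $\Q$), hence monic, reciprocal and integral, so $\det\Sf_\zeta=\pm1$; its eigenvalues are $\alpha$ (one expanding direction), $\alpha^{-1}$ (one contracting direction) and $d-2$ simple eigenvalues on the unit circle (neutral directions). Thus $\Sigma_N(\vec v)=\sum_{n=0}^N\dist_{\Tt^d}(A^n\bar v,0)^2$ is a Birkhoff-type sum along the $A$-orbit of $\bar v=\vec v\bmod\Gam^*$. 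The presence of the neutral directions — there is no contraction transverse to the single unstable line — is exactly the ``borderline'' feature that makes this case much harder than the $|\alpha_2|>1$ one, and is what ultimately forces the very slow rate. It also means one cannot fix $\om$: by the previous paragraph the bound is needed for the whole arc $\{\om\vec s:\om\in[1,\alpha]\}$, so, writing $E_N:=\{\vec v:\Sigma_N(\vec v)<c_\zeta\Phi(N)\}$, at each scale $N$ one must estimate the measure of $\{\vec s\in K:\ \{\om\vec s:\om\in[1,\alpha]\}\cap E_N\neq\es\}$; since $E_N$ is thin in the radial direction (at scale $\lesssim\alpha^{-N}$, by the Lipschitz dependence of $\Sigma_N$ on $\vec v$), passing from $\vec v\in E_N$ to ``a roof vector whose ray hits $E_N$'' costs at least a factor $\alpha^N$ over the measure of $E_N$, which must be defeated.

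The heart of the argument is therefore a metric (Erd\H{o}s--Kahane) estimate: for each scale $N$, how much of $\Tt^d$ can carry an $A$-orbit that stays within distance $\eps$ of $0$ for all but $O(\Phi(N)/\eps^2)$ of the first $N$ steps. I would analyse the near-$0$ excursions of the orbit. On a run of $m$ consecutive indices where $\|A^n\bar v\|<\eps$ (with $\eps$ a small fixed constant) no reduction modulo $\Gam^*$ occurs, so the run is an exact orbit segment of the linear map $\Sf_\zeta^{\sf T}$; its expansion by $\alpha$ along the unstable direction forces the relevant point to lie within $\alpha^{-m}$ of a lattice image of the stable-plus-neutral subspace, confining $\vec v$ — and hence $\vec s$ along the ray — to a set of measure $\lesssim\eps^{\,\mathrm{const}}\alpha^{-m}$. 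As at most $O(\Phi(N)/\eps^2)$ indices can escape such runs, one gets a measure bound of the form $\sum_{k\lesssim\Phi(N)/\eps^2}\binom{N}{k}\cdot(\text{product of excursion factors})$. The genuinely delicate point — and the main obstacle — is to make this beat both the radial-thickening loss of $\alpha^N$ and the summation over the $\sim\alpha^N$ frequency windows produced by the ``stability'' of a heavy spectral mass (if $\sig_f(B_r(\om_0))$ is large then so is $\sig_f(B_{2r}(\om))$ on the entire interval $B_r(\om_0)$ of length $\alpha^{-N}$, which is how a single $\om$ gets replaced by an interval). This seems to require running the excursion bookkeeping not at one scale but recursively across a whole cascade of scales; optimising the escape parameter $\eps$ against the number and lengths of the excursions and against those two $\alpha^N$-sized counts is precisely what pins $\Phi(N)$ down to the doubly-logarithmic order $\log\log N/\log\log\log N$ appearing in $h_\beta$, and likewise explains why $\beta$ must be taken small.

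Finally, with such a metric estimate in hand, I would sum it over a suitably sparse (e.g.\ doubly-exponential) sequence of scales $N_k$ — legitimate because $N\mapsto\Sigma_N(\vec v)$ is non-decreasing and $\Phi$ changes only by a factor $1+o(1)$ across each block $[N_k,N_{k+1}]$ — apply Borel--Cantelli on each compact box of roof vectors, and exhaust $\R^d_+$ by countably many boxes. This yields, for a.e.\ $\vec s\in\R^d_+$, a constant $C_{\vec s}$ with $\Sigma_N(\om\vec s)\ge c_\zeta\Phi(N)-C_{\vec s}$ for all $\om\in[1,\alpha]$ and all large $N$; combined with the reductions of the $\om$-range and with the first-paragraph reduction, this gives \eqref{eq:main} for all $r>0$ and all $\om\in\R$, and QWM with rate $h_\beta$ then follows from Remark~\ref{rem:QWM}.
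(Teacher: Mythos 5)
Your overall frame---bounding $\sig_f(B_r(\om))$ via \eqref{eq:product} and Lemma~\ref{lem-easy1}, reducing to a lower bound of order $\log\log N/\log\log\log N$ for $\Sigma_N(\om\vec s)=\sum_{n\le N}\|(\Sf_\zeta^{\sf T})^n\om\vec s\|^2_{\R^d/\Gam^*}$, proving a metric Erd\H{o}s--Kahane estimate for the bad roof vectors, and closing with Borel--Cantelli over a sparse sequence of scales---is the same as the paper's. But the proposal stops exactly where the proof has to be done. You yourself call the uniformity in $\om$ ``the genuinely delicate point'' and only say it ``seems to require running the excursion bookkeeping recursively across a cascade of scales''; no such argument is given, and the losses you anticipate (a radial-thickening factor $\alpha^N$, plus a sum over $\sim\alpha^N$ frequency windows) are not what happens and would indeed be fatal if they did. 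The paper avoids any $\alpha^N$-sized count by encoding the spectral parameter in the digit expansion itself: writing $A^n\om\vec s=\bp_n+\beps_n$ and $\bz_n=A\bp_{n-1}-\bp_n$, the identity \eqref{om-eq} recovers $\om$ from the digits $(\bz_\ell)$ up to error $\alpha^{-n}$, while \eqref{eq2a}--\eqref{eq3} recover $\langle\vec s,\bbe_3^*\rangle$ up to an error controlled by $|\beps_n|_3$; so the union over $\om$ is replaced by a count of digit patterns with few nonzero digits, and the bad set of roof vectors is covered by disks of radius $\sim B^2(k/n_k)^{1/2}$ centered at the algebraic points $\Phi^{(3)}_{n}/\Phi^{(1)}_{n}$. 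Equally essential, and absent from your sketch, is the constructive use of the unit-modulus eigendirection: along a run of zero digits $|\beps_\ell|_3$ is constant (\eqref{eq-step}), so in the case of few nonzero digits either (E3) fails at an intermediate scale and every one of the $\sim n_q$ terms in the run contributes $\ge 2q/n_q$ to $\Sigma_N$, or $\vec s$ lies in the small exceptional set $\Ek_q(B)$ defined by (E1)--(E3) at the factorial scales $n_k=2^{k!}$. You mention the neutral directions only as an obstacle, so the mechanism that actually produces the $\log\log/\log\log\log$ rate is missing.

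The trimming of the $\om$-range is also flawed. The claimed estimate ``$\Sigma_N(\om\vec s)\ge c_\zeta\Phi(N)-C_{\vec s}$ for all $\om>0$ and all large $N$'' cannot hold with a constant independent of $\om$: for $|\om|$ small, $\|A^n\om\vec s\|$ stays tiny for all $n\lesssim\log_\alpha(1/|\om|)$, so the threshold scale necessarily depends on $|\om|$ (this is why the paper's $R_0$ in \eqref{def-R0} depends on $B(\om)$). The factor $\min\{1,|\om|^{-2}\}$ only helps for large $|\om|$, and your rescaling argument ($\vec s\mapsto\rho\vec s$ to force $\om\in[1,\alpha]$) does not repair this: full measure of good points on a.e.\ ray gives no control of the constants $C_{\rho\vec s}$ at the specific rescalings $\rho\approx\om$ you would need, and a dense set of good $\rho$ is useless when the constants are not uniform. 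In the paper this regime is handled by the glueing argument of Section~\ref{sec:glue}, which uses the hypothesis $\int f\,d\mu=0$ together with Adamczewski's bound \eqref{adam} at $\om=0$ and Lemma~\ref{lem:glue}; your proposal never invokes the mean-zero assumption, without which the statement is false near $\om=0$. So the proposal correctly identifies the strategy but leaves the central counting argument, the role of the Salem (unit-circle) eigendirection, and the small-frequency glueing unproved.
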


We do not know if this is sharp; QWM with a log-H\"older rate seems more plausible. In any case, we know that H\"older QWM rate is impossible
for any $\vec s$, see the next theorem.
The idea of the proof of Theorem~\ref{th:main} is based on a variant of the Erd\H{o}s-Kahane technique, in the framework of
``vector expansions in base $\Sf_\zeta$'', developed in 
\cite{BuMMSo25}. As in many other recent papers on QWM, the proof proceeds  via \eqref{eq:product}, followed by Lemma~\ref{lem-easy1}. Thus we need to consider
the distance from $(\Sf_\zeta^{\sf T})^n \om \vec s$ to the nearest point in the lattice $\Gam^*$. We ``win'', so to say, if this distance is not too small
some of the time, in a quantitative way, which has  to be uniform in the spectral parameter $\om$. 
However, we do have some freedom in the choice of the roof vector $\vec s$, since we are 
allowed to throw away a set of measure zero.

\begin{theorem} \label{th:noQWM}
For any substitution $\zeta$ of Salem type on $d$ symbols, for any $\vec s\in \R^d$ and any $\alpha>0$, for almost every Lip-cylindrical function $f$
of mean zero, for any $C_f$, the inequality \eqref{spec-bound} is violated for $h(r) = r^\alpha$, for a sequence of $R = R_n\to \infty$.
\end{theorem}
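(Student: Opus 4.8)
Throughout, write $\gamma>0$ for the H\"older exponent (the ``$\alpha$'' of the statement; we keep $\alpha$ for the Perron--Frobenius eigenvalue of $\Sf_\zeta$), and note that it is enough to rule out $\gamma\in\Q\cap(0,2)$, since for $\gamma'\ge2$ the bound \eqref{spec-bound} with $h(r)=C_f r^{\gamma'}$ implies it with $h(r)=C_f r$. The plan is a proof by contradiction. Suppose that for a set of mean-zero Lip-cylindrical functions of positive measure, \eqref{spec-bound} holds with $h(r)=C_f r^{\gamma}$; fix such an $f$. Using the identity
\[
\|S^f_R(\cdot,\om)\|^2_{L^2(\wt\mu_{\vec s})}=\int_{\R}\Bigl|\int_0^R e^{2\pi it(\om-\xi)}\,dt\Bigr|^2 d\sig_f(\xi)
\]
and splitting the integral at $|\om-\xi|=R^{-1}$, a dyadic summation invoking \eqref{spec-bound} on each annulus (the tail converges because $\gamma<2$) gives, uniformly in the spectral parameter,
\[
\|S^f_R(\cdot,\om)\|^2_{L^2(\wt\mu_{\vec s})}\ \le\ C_f'\,R^{2-\gamma},\qquad R>R_0,\ \om\in\R .
\]
So it will suffice to produce, for $f$ outside a null set, frequencies $\om_k$ and radii $R_k\to\infty$ with $\|S^f_{R_k}(\cdot,\om_k)\|^2_{L^2(\wt\mu_{\vec s})}\ge R_k^{2-o(1)}$.

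Such large twisted Birkhoff integrals come from the Salem structure. Since $\Sf_\zeta$ is irreducible over $\Q$ and $\alpha$ is Salem, the characteristic polynomial of $\Sf_\zeta$ is the minimal polynomial of $\alpha$, which is reciprocal with constant term $\pm1$; hence $\det\Sf_\zeta=\pm1$, and $\Sf_\zeta^{\sf T}$ acts as an automorphism of the torus $\R^d/\Gam^*$ with one expanding eigenvalue ($\alpha$), one contracting eigenvalue ($\alpha^{-1}$), and all remaining eigenvalues on the unit circle (at least one pair $e^{\pm2\pi i\theta}$ with $\theta\notin\Q$). Because the iteration of $\Sf_\zeta^{\sf T}$ is non-expanding along the contracting and unit-modulus directions, one can keep $(\Sf_\zeta^{\sf T})^n\om\vec s$ close to $\Gam^*$ for most $n$: by an Erd\H{o}s--Kahane type greedy construction in the framework of ``vector expansions in base $\Sf_\zeta$'' of \cite{BuMMSo25} --- and, in the self-similar case, where $\vec s$ is the right Perron eigenvector of $\Sf_\zeta^{\sf T}$, directly from algebraic units $u$ of $\Q(\alpha)$ whose conjugate at a unit-modulus place of $\alpha$ has absolute value $<1$ (such units exist by the Dirichlet unit theorem) --- one obtains $\om_k\in\R$ (with $|\om_k|$ bounded below and $|\om_k|=\alpha^{o(N_k)}$) and integers $N_k\to\infty$, $R_k:=\alpha^{N_k}$, such that
\[
\sum_{n=0}^{N_k}\bigl\|(\Sf_\zeta^{\sf T})^n\,\om_k\vec s\bigr\|^2_{\R^d/\Gam^*}\ =\ o(N_k),
\]
i.e.\ $(\Sf_\zeta^{\sf T})^n\om_k\vec s$ lies within $o(1)$ of $\Gam^*$ for all but $o(N_k)$ of the indices $n\le N_k$.

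The last ingredient is a lower bound dual to Proposition~\ref{prop:prod}: arguing as in the matrix Riesz product analysis of \cite{BuSo14} (equivalently, in the two-sided form of the expansion estimates of \cite{BuMMSo25}), one has, for every $f\in\Lip(\Xxi)$,
\[
\|S^f_R(\cdot,\om)\|^2_{L^2(\wt\mu_{\vec s})}\ \ge\ c\,|\Pi_\om f|^2\cdot\min\{1,|\om|^{-2}\}\cdot R^2\prod_{n=0}^{\floor{\log_{\alpha}R}}\Bigl(1-\lambda\bigl\|(\Sf_\zeta^{\sf T})^n\om\vec s\bigr\|^2_{\R^d/\Gam^*}\Bigr)^2 ,
\]
where, for each non-degenerate $\om$ (that is, $\om s_j\notin\Z$ for all $j$), $\Pi_\om$ is a nonzero linear functional on $\Lip(\Xxi)$ built from the values of $f$ on the cylinders $[j]$ and the phases $e^{2\pi is_j\om}$. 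For the $\om_k$ above the product equals $\exp\bigl(-O(\sum_{n\le N_k}\|\cdots\|^2)\bigr)=R_k^{-o(1)}$ and $\min\{1,|\om_k|^{-2}\}=R_k^{-o(1)}$, so $\|S^f_{R_k}(\cdot,\om_k)\|^2\ge c\,|\Pi_{\om_k}f|^2\,R_k^{2-o(1)}$. A Borel--Cantelli argument (the $f$ with $|\Pi_{\om_k}f|\le R_k^{-\gamma/4}$ form a slab of width $\asymp R_k^{-\gamma/4}$, and $\sum_k R_k^{-\gamma/4}<\infty$ if $R_k$ grows super-geometrically) shows that for $f$ outside a Lebesgue-null set one has $|\Pi_{\om_k}f|>R_k^{-\gamma/4}$ for all large $k$; hence $\|S^f_{R_k}(\cdot,\om_k)\|^2\ge R_k^{2-\gamma/2-o(1)}$, which exceeds $C_f'R_k^{2-\gamma}$ for $k$ large. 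This contradicts the first paragraph, so \eqref{spec-bound} cannot hold with $h(r)=C_f r^{\gamma}$ for any $r_0>0$; in particular the violating radii accumulate at $0$, i.e.\ occur for a sequence $R=R_n\to\infty$. Taking the union over $C_f\in\N$ and $\gamma\in\Q\cap(0,2)$ of the (null) sets of exceptional $f$ proves the theorem.

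The crux is the construction of the resonant frequencies $\om_k$: one needs them, for an \emph{arbitrary} roof vector $\vec s$, to grow only sub-polynomially in $R_k$ while $\|(\Sf_\zeta^{\sf T})^n\om_k\vec s\|_{\R^d/\Gam^*}$ stays small on average over $n\le N_k$, and this is exactly where the Salem hypothesis is indispensable. (For a Pisot substitution the system is not even weakly mixing; for a substitution with $|\alpha_2|>1$ the expanding second eigenvalue forces these distances to equidistribute, which rules out the construction --- consistently with H\"older QWM holding there for a.e.\ $\vec s$.)
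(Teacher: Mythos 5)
Your overall architecture (find resonant frequencies at which the orbit $(\Sf_\zeta^{\sf T})^n\om\vec s$ stays near the lattice $\Gam^*$, then turn this into growth of twisted Birkhoff integrals for typical $f$, contradicting the uniform bound \eqref{spec-bound}) is the same as the paper's, but the two load-bearing steps are asserted rather than proved, and the first one is precisely the new content of the paper's argument. The paper's Proposition~\ref{prop:noQWM} produces, for an \emph{arbitrary} $\vec s$ and every $\eps>0$, a single nonzero $\eta\in\Z[\alpha]$ with $\|\eta A^n\vec s\|_{\R^d/\Z^d}<\eps$ for \emph{all} $n\ge0$; the proof expands $\langle\eta A^n\vec s,v\rangle$ as an integer trace plus terms controlled by the quantities $C_2\eta^v-\sigma_0(\eta^v)$ and $C_{2j+1}\eta^v-\sigma_j(\eta^v)$ (this is where the Salem property enters: the factors $\alpha_j^{\pm n}$ stay bounded), and then makes all of these simultaneously small by Minkowski's first theorem applied to the lattice $\tau(\Z[\alpha])\subset\R^d$, whose non-degeneracy is a Vandermonde computation depending on the coordinates of $\vec s$ in the eigenbasis. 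Your substitute --- ``an Erd\H{o}s--Kahane type greedy construction'' --- does not do this: Erd\H{o}s--Kahane is a counting/covering device for showing that \emph{exceptional parameter sets} are small (it is what drives the a.e.\ upper bound of Theorem~\ref{th:main}), and it gives no mechanism for producing, for a \emph{fixed, arbitrary} roof vector $\vec s$, frequencies that keep the whole orbit near $\Gam^*$ with $|\om_k|$ growing subexponentially in $N_k$. Your Dirichlet-unit remark only addresses the self-similar choice of $\vec s$, which is exactly the case already treated in \cite{BuMMSo25} and not the content of the theorem. Since you yourself identify this construction as ``the crux,'' leaving it as an assertion is a genuine gap.

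The second gap is the claimed ``dual'' lower bound $\|S^f_R(\cdot,\om)\|^2\ge c\,|\Pi_\om f|^2\min\{1,|\om|^{-2}\}R^2\prod_n(1-\lambda\|\cdot\|^2)^2$. No such two-sided Riesz-product estimate is available in \cite{BuSo14} or \cite{BuMMSo25}: the product estimates there are one-sided upper bounds, and the recursion underlying $S^f_R$ is a \emph{matrix} (spectral) cocycle, not a scalar Riesz product, with boundary terms for generic $R$, so a pointwise-in-$R$ lower bound with a single scalar functional $\Pi_\om$ would itself require a nontrivial proof. The paper instead argues as follows: once $\|(\Sf_\zeta^{\sf T})^n\eta\vec s\|<\eps$ for all $n$, the spectral-cocycle matrices along the orbit are $O(\eps)$-perturbations of $\Sf_\zeta$, and the machinery of \cite[Theorem 4.3, Corollary 4.4]{BuSo20} together with \cite[Lemma 6.3]{BuMMSo25} (this is also where the ``almost every Lip-cylindrical $f$'' quantifier is handled, replacing your heuristic Borel--Cantelli over slabs in an infinite-dimensional function space) yields $\liminf_{r\to0}\log\sig_f(B_r(\eta(\eps)))/\log r\le\const\cdot\eps$ at the \emph{fixed} frequency $\eta(\eps)$, which for $\eps$ small contradicts \eqref{spec-bound} with $h(r)=r^\alpha$. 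Note that with a fixed $\eta(\eps)$ you also do not need frequencies varying with the scale, nor the condition $|\om_k|=\alpha^{o(N_k)}$; your reduction in the first paragraph (spectral bound $\Rightarrow$ $\|S^f_R\|^2\lesssim R^{2-\gamma}$) is correct but becomes unnecessary once the argument is phrased through local dimensions.
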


The proof of Theorem~\ref{th:noQWM} is analogous to the proof of  \cite[Theorem 2.7]{BuMMSo25}, which deals with the cases when
$\vec s = \one$ and when $\vec s$ is the PF eigenvector.


\section{Proof of the Theorem~\ref{th:main}}

\subsection{The set-up}
By assumption, we have a substitution $\zeta$ on $d$ symbols with an irreducible over $\Q$ substitution matrix ${\sf S_\zeta}$ having a Salem number
 $\alpha$ as the PF eigenvalue. The other eigenvalues are the Galois conjugates of $\alpha=\alpha_1$: they are
 $\alpha_2=\alpha^{-1}$, and the rest are $\alpha_3,\ldots,\alpha_d$, complex conjugate pairs 
of modulus one. 
Fix a height vector $\vec s$ and thus the corresponding suspension flow $(\Xxi, \phi_t,\wt\mu_{\vec s})$.
Let $A ={\sf S_\zeta^T}$ and we consider its action on $\C^d$.
The matrix $A$ is diagonalizable over $\C$; we are going to use the decomposition of (real) vectors in terms of the 
(complex) basis of eigenvectors of $A$. Let $\{\bbe^*_j\}_{j\le d}$  be a basis consisting of eigenvectors for the transpose matrix $A^{\sf T}$, and let
$\{\bbe_j\}_{j\le d}$ be the dual basis, i.e., $\langle \bbe_k,\bbe_j^*\rangle = \delta_{kj}$. If $\bx = \sum_{k=1}^d c_k \bbe_k\in \C^d$, then 
$c_k = \langle x,\bbe_k^*\rangle$ and 
\be\label{eigen1}
\langle A^n \bx, \bbe_k^*\rangle = \alpha_k^n c_k,\ \ k=1,\ldots, d.
\ee

Fix $B>1$ and then $\om\ne 0$, so that $|\om|\in [B^{-1},B]$.
Consider the lattice $L:=\Gam^*<\R^d<\C^d$, where $\Gam$ is generated by the population vectors of return words,
see Section 2.3.
Note that  $L$ depends only on $\zeta$, and 
$AL\subset L$. Below we will use $\ell^\infty$ metric and matrix norm, unless stated otherwise.

We will run the scheme of \cite[Section 3]{BuMMSo25}, replacing the vector $\vec 1$ by $\vec s\in \R^d_+$, namely, 
 we write
$$A^n \om \vec s = \bp_n + \beps_n,\ \ n\ge 0,$$
where $\bp_n$ is the nearest lattice point, so that
$$
\|\beps_n\|=\bigl\|A^n \om \vec s\bigr\|_{_{\R^d/\Gam^*}}=\bigl\|(\Sf_\zeta^{\sf T})^n \om \vec s\bigr\|_{_{\R^d/\Gam^*}}.
$$
Note that $\bp_n$  and $\beps_n$ depend on $\om$.

In order to keep the paper self-contained, we repeat the first steps of \cite[Section 3]{BuMMSo25}.
Define
	$$
	\bz_0 := -\bp_0,\ \ \bz_n:= A\bp_{n-1} - \bp_{n},\ \ n\ge 1.
	$$
	We have $A(\bp_{n-1} + \beps_{n-1}) = \bp_n + \beps_n$, hence
	\be \label{eq11}
	\bz_n= A\bp_{n-1} - \bp_{n} = \beps_{n} - A \beps_{n-1} \in L,\ \ n\ge 1.
	\ee
The following is now immediate:

	\begin{lemma} \label{lem1}
	Let $a_L>0$ be the minimal distance between distinct points in $L$ and let $b_L>0$ be such that the union of balls of radius $b_L$ with centers in $L$ covers $\R^d$.  Then
\begin{enumerate}
\item[(i)] 	
${\|\beps_n\|} \le b_L\ \ \mbox{for all}\ \ n\ge 0$;
\item[(ii)] 
	$
	\| \bz_n\|  \le (1 + \|A\|)\cdot b_L \ \ \mbox{for all}\ n\ge 0,
	$
hence there exists a finite set $F\subset L$ such that $\bz_n\in F$ for $n\ge 0$;
\item[(iii)] if
 $\max\{\|\beps_{n-1}\|, \|\beps_{n}\|\} < \frac{a_L}{4\|A\|}=: c_{A,L}$, then $\bz_n = \bf{0}$ and $\beps_{n} = A\beps_{n-1}$.
\end{enumerate}
\end{lemma}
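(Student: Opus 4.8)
The plan is to obtain all three parts by unwinding the definitions together with the identity \eqref{eq11}; no substantial new input is needed. For (i), recall that $\bp_n$ is by definition a point of $L$ at minimal $\ell^\infty$-distance from $A^n\om\vec s$. Since the balls of radius $b_L$ centred at points of $L$ cover $\R^d$, there is $p\in L$ with $\|A^n\om\vec s - p\|\le b_L$, and minimality of $\bp_n$ then gives $\|\beps_n\| = \|A^n\om\vec s - \bp_n\|\le b_L$.

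For (ii), in the range $n\ge 1$ I combine \eqref{eq11}, which says $\bz_n = \beps_n - A\beps_{n-1}$, with the triangle inequality and part (i): $\|\bz_n\|\le \|\beps_n\| + \|A\|\cdot\|\beps_{n-1}\|\le (1+\|A\|)b_L$. The term $\bz_0 = -\bp_0$ must be bounded separately: $\|\bz_0\| = \|\bp_0\|\le \|\om\vec s\| + \|\beps_0\|\le B\|\vec s\|_\infty + b_L$, a constant once $\om$ is confined to $[B^{-1},B]$ (and, if one wants a bound independent of $\vec s$, once $\vec s$ is confined to a fixed compact subset of $\R^d_+$, which costs nothing since the a.e.\ statement of Theorem~\ref{th:main} may be proved on such boxes and then reassembled by a countable union). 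In all cases the vectors $\bz_n$ lie in $L$ and in a fixed bounded set, so $F := \{z\in L:\ \|z\|\le (1+\|A\|)b_L\}\cup\{\bz_0\}$ is finite and contains every $\bz_n$.

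For (iii), \eqref{eq11} again gives $\bz_n = \beps_n - A\beps_{n-1}\in L$, and if $\max\{\|\beps_{n-1}\|,\|\beps_n\|\} < c_{A,L} = \tfrac{a_L}{4\|A\|}$ then
\[
\|\bz_n\| \le \|\beps_n\| + \|A\|\cdot\|\beps_{n-1}\| < (1+\|A\|)\,c_{A,L} = \frac{a_L}{4}\Bigl(1+\frac{1}{\|A\|}\Bigr) \le \frac{a_L}{2} < a_L,
\]
where I used $\|A\|\ge \rho(A) = \alpha > 1$ (the induced $\ell^\infty$-norm dominates the spectral radius, and $\alpha$ is a Salem number). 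Since $a_L$ is the least distance between distinct points of $L$, the only element of $L$ of norm strictly less than $a_L$ is $\mathbf 0$; hence $\bz_n = \mathbf 0$, and then \eqref{eq11} forces $\beps_n = A\beps_{n-1}$.

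The whole argument is routine, so there is no real obstacle; the only point deserving care is that the explicit constant $(1+\|A\|)b_L$ in (ii) controls $\bz_n$ only for $n\ge 1$, not $\bz_0 = -\bp_0$, whose size depends on $\om\vec s$ — which is why one either enlarges $F$ by the single point $\bz_0$ or restricts $\vec s$ to a bounded region at the outset. The genuinely useful content is the implication in (iii): two consecutive small values of $\|\beps\|$ kill the ``carry'' $\bz_n$ and reduce the recursion to the linear map $A$, and this dichotomy is what drives the Erd\H{o}s--Kahane analysis in the rest of Section 3.
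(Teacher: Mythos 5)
Your argument is correct and is exactly the verification the paper leaves implicit (the lemma is stated there as ``immediate'' from \eqref{eq11} and the definitions), including the use of $\|A\|\ge\alpha>1$ to make $(1+\|A\|)c_{A,L}<a_L$ in part (iii). Your caveat about $n=0$ is well taken: the uniform bound $(1+\|A\|)b_L$ really only follows for $\bz_n$ with $n\ge 1$, whereas $\|\bz_0\|=\|\bp_0\|$ can be of order $B\|\vec s\|$, so as literally stated for all $n\ge 0$ the paper's bound needs exactly the adjustment you describe; this is harmless downstream, since $\vec s$ lies in a fixed simplex and $\bz_0$ occupies a single position, contributing at most $O(B^{d})$ extra choices to the counting in the proof of Proposition~\ref{prop:main}, which is absorbed into the $B$-power there by taking $\Ups$ slightly larger.
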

	

\medskip

In view of \eqref{eq11}, we can write for $n\ge 1$:
	\begin{eqnarray*}
		\beps_{n} = \bz_n + A \beps_{n-1} & = & \bz_n + A(\bz_{n-1} + A\beps_{n-2})\\
		& = & \bz_n + A\bz_{n-1} + A^2\beps_{n-2} = \ldots \\
		& = & \bz_n + A\bz_{n-1} + \cdots + A^{n-1}\bz_{1} + A^{n} \beps_0,
	\end{eqnarray*}
hence
\be \label{eq101}
\om \vec s = -\bz_0 - A^{-1}\bz_1 - A^{-2}\bz_2 - \cdots -A^{-n}\bz_n + A^{-n} \beps_n,\ \ n\ge 1,
\ee
since $$\beps_0 = \om\vec s - \bp_0 = \om\vec s + \bz_0.$$
We view \eqref{eq101} as a ``vector digit expansion in base $A$''; note that 
$\bz_j\in F$, a finite subset of $L$.

Denote $b_n^{(j)}:= \langle \bz_n, \bbe^*_j\rangle$ for $n\ge 0$, and write
$$
\Phi_n^{(j)}(x) = - b_0^{(j)} - b_1^{(j)}x - \cdots - b_n^{(j)} x^n.
$$
Now \eqref{eq101} implies
\be \label{eq2}
\om\langle \vec s, \bbe_j^*\rangle = \Phi^{(j)}_n(1/\alpha_j) + 
\frac{\langle \beps_n,\bbe_j^*\rangle}{\alpha_j^n},\ \ j=1,\ldots,d,
\ee
where
\be \label{eps-bound2}
|\langle \beps_n,\bbe_j^*\rangle|\le C_0\|\beps_n\| \le C_1 = C_1(A,L)\ \ \mbox{for all}\ \ n,j.
\ee
Thus $C_1$ depends only on $\zeta$.
We can assume that $C_1\ge 1$. Without loss of generality, $\langle \vec s, \bbe_1^*\rangle=1$, since $\bbe_1^*$ is a positive PF eigenvector, hence
\be \label{om-eq}
\om = \Phi^{(1)}_n(1/\alpha) + \frac{\langle \beps_n,\bbe_1^*\rangle}{\alpha^n}.
\ee
We obtain
\be \label{eq2a}
\langle \vec s, \bbe_3^*\rangle = 
\frac{\Phi^{(3)}_n(1/\alpha_3) + \frac{\langle \beps_n,\bbe_3^*\rangle}{\alpha_3^n}}{\om} = 
\frac{\Phi^{(3)}_n(1/\alpha_3) + \frac{\langle \beps_n,\bbe_3^*\rangle}{\alpha_3^n}}{\Phi^{(1)}_n(1/\alpha) + \frac{\langle \beps_n,\bbe_1^*\rangle}{\alpha^n}}\,
\ee
(in fact, any $\bbe_j^*$ for $j=3,\ldots,d$ could be used).
Observe that 
$$
|\Phi^{(1)}_n(1/\alpha)|\ge B^{-1} - C_1\alpha^{-n}
$$
by \eqref{om-eq} and \eqref{eps-bound2}, since $|\om|\ge B^{-1}$.
Now\eqref{eq2a} yields, in view of $|\alpha_3|=1$:
\begin{eqnarray}
\left|\langle \vec s, \bbe_3^*\rangle - \frac{\Phi^{(3)}_n(1/\alpha_3)}{\Phi^{(1)}_n(1/\alpha)}\right| & \le &
\frac{|\langle \beps_n,\bbe_3^*\rangle|}{\om}
+ \frac{|\Phi^{(3)}_n(1/\alpha_3)|\cdot |\langle \beps_n,\bbe_1^*\rangle|}
{\om \alpha^n |\Phi^{(1)}_n(1/\alpha)|}\nonumber \\[1.1ex]
& \le & B |\langle \beps_n,\bbe_3^*\rangle| + \frac{C_1 C_2 B(n+1)}{\alpha^n (B^{-1} - C_1\alpha^{-n})}, \label{eq3}
\end{eqnarray}
where $C_2 = \max_{k,j}\|b_k^{(j)}\| = C_2(A,L)=C_2(\zeta)$.

Note that $\bbe_3^*$ is a complex vector; the inequality \eqref{eq3} is equivalent to two real inequalities.
Identical inequalities are obtained if we use the eigenvalue $\ov\alpha_3$, corresponding to the complex conjugate eigenvector $\ov\bbe_3^*$.
Let $|\bx|_3 := |\langle \bx, \bbe_3^*\rangle|$; note that $|A\bx|_3 = |\bx|_3$ for a real vector $\bx\in \R^d$ by \eqref{eigen1}.
Recall that we have
\be \label{eq-step}
\bz_n={\bf 0}\ \implies \ \beps_n = A\beps_{n-1}\ \implies {|\beps_n|}_3 = {|\beps_{n-1}|}_3.
\ee
Further, let $\Proj_3$ be the projection, commuting with $A$, onto
the {\em real} $A$-invariant plane, on which $A$ acts a rotation, with eigenvalues $\alpha_3$ and $\ov\alpha_3$ (in appropriate coordinates).

\begin{remark} \label{rem}
It is easy to see that if we know $\langle \beps_n,\bbe_3^*\rangle\in \C$ up to some error $\delta$, for a real vector $\vec s$, then we know $\proj_3(\vec s)$
up to an error $\const\cdot \delta$, with a constant depending only on $A$. 
\end{remark}

\subsection{The main proposition}
The estimate \eqref{eq:main} is obtained first for $\om\in [B^{-1},B]$, for any Lip-cylindrical $f$, with an explicit dependence on $B>1$, which is then
``glued'' with a standard estimate for $\om=0$ and $f$ of mean zero.


\begin{prop}\label{prop:main}
There exists $\beta = \beta(\zeta)>0$ such that for Lebesgue-a.e.\ $\vec s\in \R^d_+$, 
for every Lip-cylindrical function $f$, the spectral measure
$\sig_f$ of the suspension flow $(\Xxi, \phi_t,\wt\mu_{\vec s})$ satisfies
\be \label{bound44}
\sig_f(B_r(\om)) \le C_{\zeta,\vec s} \|f\|^2_\infty \cdot \min\{1, |\om|^{-2}\}\cdot 
\exp\left(-\frac{\beta \log\log\log_\alpha (1/2r)}{\log\log\log \log_\alpha (1/2r)}\right),\ \ r\le (2R_0)^{-1},
\ee
for all $\om\ne 0$, where
\be \label{def-R0}
R_0 = \alpha^{2^{\scriptstyle{\lfloor\gam \log(B)/(\log\log B)\rfloor}!}},\ \ \mbox{with}\ \  \gam = \gam (\zeta,\vec s)\ \ \mbox{and}
\ \ B = \max\{16, \lceil|\om|\rceil, \lceil|\om|^{-1}\rceil\}.
\ee
\end{prop}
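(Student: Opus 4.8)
The plan is to control, uniformly in the spectral parameter, the number of indices $n\le N$ for which the distance $\|\beps_n\|=\|A^n\om\vec s\|_{\R^d/\Gam^*}$ is ``small'' (say below the threshold $c_{A,L}$ of Lemma~\ref{lem1}(iii)), and to show that for a.e.\ $\vec s$ this number is at most $N - g(N)$ for a function $g$ growing like $\log\log N/\log\log\log N$; feeding this into \eqref{eq:product} with $N=\lfloor\log_\alpha R\rfloor$ and then invoking Lemma~\ref{lem-easy1} yields \eqref{bound44}. Concretely, when $\|\beps_{n-1}\|,\|\beps_n\|$ are both small we have $\bz_n=\bf 0$ and $\beps_n=A\beps_{n-1}$ by \eqref{eq-step}, so a long run of small $\beps_n$ forces a long stretch of the digit expansion \eqref{eq101} to have $\bz_j=\bf 0$; the Erd\H{o}s--Kahane mechanism then says such $\vec s$ must lie in a very thin set. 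The key device is the identity \eqref{eq2a}--\eqref{eq3}: the ``good'' coordinate $\langle\vec s,\bbe_3^*\rangle$ (equivalently $\proj_3(\vec s)$, by Remark~\ref{rem}) is pinned down, to within an error that is essentially $\alpha^{-n}$ times a polynomial factor, by the integer data $b_0^{(3)},\dots,b_n^{(3)}$ coming from $\bz_0,\dots,\bz_n\in F$. Since $\alpha_3$ lies on the unit circle, the $\C$-valued polynomial $\Phi_n^{(3)}(1/\alpha_3)$ does \emph{not} converge; but once a run of zero digits occurs, along that run the value is frozen, and the number of possible frozen values is bounded by $(\card F)^{\,(\text{number of nonzero digits before the run})}$.

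The steps, in order. First, fix $B$ as in \eqref{def-R0} and work with $|\om|\in[B^{-1},B]$. Chop $\{0,1,\dots,N\}$ into consecutive blocks $I_k$ whose lengths $L_k$ grow like a factorial/tower schedule — this is exactly the schedule that produces the $\log\log\log/\log\log\log\log$ shape in \eqref{def:hrate}, and it is what the choice $R_0=\alpha^{2^{\lfloor\gam\log B/\log\log B\rfloor!}}$ in \eqref{def-R0} is tuned to. Second, call a block $I_k$ ``bad'' if $\|\beps_n\|<c_{A,L}$ for every $n\in I_k$ (so the digits $\bz_n$ vanish throughout $I_k$, and $\proj_3(\vec s)$ is determined up to error $\sim\alpha^{-\min I_k}\cdot\mathrm{poly}$ by the digits before $I_k$). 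Third — the counting heart — estimate the Lebesgue measure of the set of $\vec s\in\R_+^d$ (equivalently, of the coordinate $\proj_3(\vec s)$, the rest of $\vec s$ being irrelevant to smallness of $|\beps_n|_3$) for which $I_k$ is bad: the number of admissible prefixes $(\bz_0,\dots,\bz_{\min I_k-1})$ is at most $(\card F)^{\min I_k}$, each confining $\proj_3(\vec s)$ to a ball of radius $\asymp \alpha^{-\min I_k}\cdot(\min I_k)^{C}$, so the measure of ``$I_k$ bad'' is $\lesssim (\card F\cdot\alpha^{-1}\cdot \mathrm{poly})^{\min I_k}$, which is summable along the block endpoints provided $\card F<\alpha$ after passing to a high enough power of $\zeta$ (this is where irreducibility and the Salem hypothesis enter, guaranteeing the contraction rate beats the branching). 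Fourth, by Borel--Cantelli, for a.e.\ $\vec s$ only finitely many blocks are bad, so for $N$ large at least one $n$ in each of the last $\Omega(g(N))$ blocks has $\|\beps_n\|\ge c_{A,L}$; each such $n$ contributes a factor $(1-\lambda c_{A,L}^2)$ in \eqref{eq:product}. Fifth, assemble: $\prod_{n\le N}(1-\lambda\|\cdot\|^2)\le (1-\lambda c_{A,L}^2)^{\,g(N)}$ with $N=\lfloor\log_\alpha R\rfloor$, plug $R=(2r)^{-1}$ into Lemma~\ref{lem-easy1}, match the block schedule to $h_\beta$, and finally glue the $\om=0$ case (standard, using $\int f\,d\mu=0$ and the mean-zero bound on $S_R^f$).

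A couple of technical points I would be careful with. The error in \eqref{eq3} is not purely $\alpha^{-n}$: it has the polynomial factor $C_1C_2B(n+1)/(B^{-1}-C_1\alpha^{-n})$, so the argument only starts once $\alpha^n\gtrsim B^2$, i.e., once $n\gtrsim \log B$ — this is precisely why $R_0$ in \eqref{def-R0} depends on $B$ and why the block schedule must begin past index $\sim\log B/\log\log B$. Also, I must extract a genuinely \emph{one-dimensional} (or finite-dimensional) Erd\H{o}s--Kahane count: the quantity that matters is $\langle\vec s,\bbe_3^*\rangle\in\C$, a $2$-real-parameter slice of $\vec s$, and I should use Fubini to integrate over the remaining parameters harmlessly, with the normalization $\langle\vec s,\bbe_1^*\rangle=1$ fixed as in \eqref{om-eq}.

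The main obstacle I expect is the third step — getting the branching/contraction balance right so the measure bound is summable. One needs $\card F$ (or more precisely the number of digit-strings compatible with $|\om|\in[B^{-1},B]$) to be strictly smaller than the contraction factor $\alpha$ per step, after replacing $\zeta$ by a power; showing this uses that $\alpha$ is Salem (so $\alpha>1$ while the relevant conjugate $\alpha_3$ has modulus exactly $1$, giving no extra contraction in that coordinate — hence the freezing-along-runs trick is essential rather than a naive geometric-series estimate) and that $\Gam^*$ is a genuine lattice (irreducibility over $\Q$). Managing the interaction of this count with the $B$-dependent polynomial slack, and arranging the block lengths $L_k$ so that the surviving count $g(N)$ is exactly of order $\log\log N/\log\log\log N$ rather than something smaller, is the delicate part; everything else is bookkeeping around \eqref{eq:product} and Lemma~\ref{lem-easy1}.
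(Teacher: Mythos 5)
There is a genuine gap, and it sits exactly at the step you yourself flagged as the "counting heart." Your measure estimate for a bad block assumes that knowing the digit prefix $(\bz_0,\dots,\bz_{\min I_k-1})$, together with the vanishing of the digits along $I_k$, confines $\proj_3(\vec s)$ to a ball of radius $\asymp \alpha^{-\min I_k}\cdot\mathrm{poly}$. That is false: by \eqref{eq3} the accuracy with which the digits pin down $\langle\vec s,\bbe_3^*\rangle$ is $B\,|\langle\beps_n,\bbe_3^*\rangle| + O(B^2 n\,\alpha^{-n})$, and since $|\alpha_3|=1$ a run of zero digits only \emph{freezes} $|\beps_n|_3$ (see \eqref{eq-step}); it does not shrink it, so the first term is generically of order $1$, not $\alpha^{-\min I_k}$. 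You even note that the freezing trick must replace the "naive geometric-series estimate," but the radius you then use \emph{is} the naive geometric estimate. The fallback you propose, arranging $\card F<\alpha$ by passing to a power of $\zeta$, also cannot work: replacing $\zeta$ by $\zeta^m$ replaces $\alpha$ by $\alpha^m$ but simultaneously inflates $F$ (its diameter is of order $(1+\|A^m\|)b_L\asymp\alpha^m$ by Lemma~\ref{lem1}, hence $\card F\asymp\alpha^{md}$), so branching beats contraction for every $d\ge 2$. The paper's proof circumvents both issues by defining the bad event at scale $n_k=2^{k!}$ as the \emph{conjunction} of three conditions: (E1) fewer than $k/2$ nonzero digits up to $n_k$ (this, not $(\card F)^{n_{k-1}}$, is what keeps the number of candidate centers down to $\binom{n_{k-1}}{<k/2}(\# F)^{k/2}$), (E2) a zero-digit run on $(n_{k-1},n_k)$, and (E3) smallness $|\beps_{n_{k-1}}|_3<(2k/n_k)^{1/2}$; it is (E3), not any contraction in the $\bbe_3$ direction, that supplies the small covering radius $\sim B^2(2k/n_k)^{1/2}$, and the resulting measure bound is summable with no condition relating $\card F$ to $\alpha$.

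This defect propagates to your steps 4--5: for $\vec s$ outside the exceptional set, at a given scale it may happen that (E1) and (E2) hold but (E3) fails, in which case there need not be \emph{any} index in the long block with $\|\beps_n\|\ge c_{A,L}$, so your "one big hit per good block" count does not produce the required decay. The paper extracts the gain in that case from the entire frozen run: $|\beps_\ell|_3\ge(2q/n_q)^{1/2}$ for all $\ell\in(n_{q-1},n_q)$, whence $\prod_{\ell}(1-\lambda\|\beps_\ell\|^2)\le(1-2\lambda q/n_q)^{n_q-n_{q-1}}\le\exp(-\lambda k/2)$ — many small factors accumulating, a mechanism absent from your scheme. (Your outer architecture — the digit expansion \eqref{eq101}, the factorial block schedule, feeding the count into \eqref{eq:product} and Lemma~\ref{lem-easy1}, the $B$-dependent starting scale, and the gluing at $0$ and $\infty$ — does match the paper, and the uniformity in $\om$ via "there exists $\om$ with $|\om|\in[B^{-1},B]$" plus the union over $B$ with the $\Psi(B)$ offset is bookkeeping you could repair; but without the (E3)-type trichotomy and the frozen-run gain the proof does not close.)
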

Denote
\be \label{def:func}
\Psi(\tau):= \frac{\log\tau}{\log\log\tau},
\ee
where $\tau \ge 4$,
so that the formula for $R_0$ in \eqref{def-R0} can be written as
\be \label{def1-R0}
R_0 = \alpha^{2^{\scriptstyle{\lfloor \gam\cdot \Psi(B) \rfloor !}}}.
\ee

\begin{proof}
By ``scale $n$'' we mean the range $R\sim \alpha^n$.
It is convenient to consider the sequence of scales
$$
n_k:= 2^{k!},
$$
so that $n_1 = 2, n_2 = 2^{2}, n_3 = 2^{6},\ldots$

Now we define the exceptional set. Recall that the sequences $\bz_n$ and $\beps_n$ depend on $\om$ and $\vec s$. First we define the ``bad set at scale $n_k$'' as follows:
\begin{eqnarray*}
\Ek_k = \Ek_k(B) & = &    \left\{\vec s\in \R^d_+,\ \langle \vec s, \bbe_1^*\rangle=1:  \ \exists\,\om\ne 0,\ |\om|\in [B^{-1},B],\ \ \mbox{satisfying:}\right.\\
& & \ \ \ \ \ \ 
 \mbox{(E1)} \ \ \#\{\ell \le n_k,\ \bz_\ell\ne 0\} < k/2;\\
 & & \ \ \ \ \ \
 \mbox{(E2)} \ \ \bz_\ell=0\ \mbox{for all}\ \ell\in (n_{k-1},n_k);\\
 & & \left. \ \ \ \ \ \
 \mbox{(E3)} \ \ |\beps_{n_{k-1}}|_3 < (2k/n_k)^{1/2}\right\}.
\end{eqnarray*}
And then let
\be \label{def-excep1}
\Ek = \Ek(\Ups) = \bigcap_{k_0=1}^\infty \bigcup_{B=16}^\infty\ \bigcup_{k=k_0+\lfloor \Ups \cdot\Psi(B)\rfloor}^\infty \Ek_k(B).
\ee
The parameter $\Ups\ge 1$  will be fixed later.
Because of our normalization, admissible roof vectors $\vec s$ lie in
a $(d-1)$-dimensional simplex in $\R^d_+$. Denote by $\Lk^r$ the $r$-dimensional Lebesgue measure.

\begin{lemma}
There exists $\Ups\ge 1$ such that $\Lk^{d-1}(\Ek(\Ups)) = 0$.
\end{lemma}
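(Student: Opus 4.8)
The plan is to bound the $\Lk^{d-1}$-measure of each bad set $\Ek_k(B)$ using the Erd\H{o}s--Kahane-type argument, and then show that the resulting bounds are summable in the sense needed to conclude from \eqref{def-excep1} that $\Lk^{d-1}(\Ek(\Ups))=0$ for $\Ups$ large. Fix $k$ and $B$. Conditions (E1) and (E2) together say that among the $\leq n_k$ indices $\ell$, fewer than $k/2$ have $\bz_\ell\neq 0$, and all nonzero $\bz_\ell$ occur with $\ell\le n_{k-1}$. Since each $\bz_\ell$ ranges over the finite set $F$ of Lemma~\ref{lem1}(ii), there are at most $\binom{n_{k-1}}{<k/2}\cdot |F|^{k/2}$ choices for the finite data $(\bz_0,\dots,\bz_{n_{k-1}})$ determining the truncated polynomials $\Phi^{(j)}_{n_{k-1}}$. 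Given such data, the point is that $\vec s$ is essentially pinned down by \eqref{eq2a} and Remark~\ref{rem}: from \eqref{om-eq}, $\om = \Phi^{(1)}_{n_{k-1}}(1/\alpha) + O(C_1\alpha^{-n_{k-1}})$ is determined up to an error exponentially small in $n_{k-1}$, and then \eqref{eq3} shows that $\langle\vec s,\bbe_3^*\rangle$ is determined up to an error of size $\sim B|\beps_{n_{k-1}}|_3 + (\text{poly}\cdot B)/\alpha^{n_{k-1}}$. By (E3) the first term is at most $B\cdot(2k/n_k)^{1/2}$; by Remark~\ref{rem} this locates $\proj_3(\vec s)$ in a ball of comparable radius.

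The next step is to propagate this to a genuine measure bound on the simplex of admissible $\vec s$. The eigenvectors $\bbe_3^*,\dots,\bbe_d^*$ span (together with $\bbe_1^*$, which is fixed by the normalization $\langle\vec s,\bbe_1^*\rangle=1$, and $\bbe_2^*$) the full space; since $\alpha$ is Salem of degree $d$, the planes $\proj_3,\proj_5,\dots$ and the $\bbe_2^*$-direction account for the remaining $d-1$ dimensions — here I would invoke the irreducibility of $\Sf_\zeta$ over $\Q$ so that no proper rational invariant subspace obstructs this. Actually, to control all $d-1$ free coordinates one also needs information in the $\alpha_2=\alpha^{-1}$ direction; this is obtained for free, since from $A^{n}\om\vec s = \bp_n + \beps_n$ and \eqref{eigen1}, $\langle\beps_n,\bbe_2^*\rangle = \alpha^{-n}\langle\om\vec s,\bbe_2^*\rangle - \langle\bp_n,\bbe_2^*\rangle$, and boundedness of $\beps_n$ forces $\om\langle\vec s,\bbe_2^*\rangle$ to lie within $O(\alpha^{-n_{k-1}})$ of a point determined by the finite data. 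Combining, each choice of the finite data $(\bz_0,\dots,\bz_{n_{k-1}})$ together with the choice of $\om$ confines $\vec s$ to a set of $\Lk^{d-1}$-measure $\lesssim \bigl(B(k/n_k)^{1/2} + B\,\mathrm{poly}(n_{k-1})\alpha^{-n_{k-1}}\bigr)^{d-1}$; since $n_k = 2^{k!} \gg n_{k-1}^{2}$, the first term dominates and this is $\lesssim (Bk/n_k)^{(d-1)/2}$. Summing over the $\leq (n_{k-1})^{k/2}|F|^{k/2}$ data choices and over the $O(B)$-many relevant $\om$ (one discretizes $[B^{-1},B]$ at scale $\alpha^{-n_{k-1}}$, giving $\lesssim B^2\alpha^{n_{k-1}}$ values, but a single $\om$ actually determines the data, so the count is the other way: the $\lesssim (n_{k-1})^{k/2}|F|^{k/2}$ data choices each come with an interval of $\om$'s of length $\lesssim \alpha^{-n_{k-1}}$, which is harmless), one gets
\[
\Lk^{d-1}(\Ek_k(B)) \;\lesssim\; (n_{k-1})^{k/2}\,|F|^{k/2}\cdot (Bk)^{(d-1)/2}\,n_k^{-(d-1)/2}.
\]
Since $n_{k-1} = 2^{(k-1)!}$ and $n_k = 2^{k!}$, the logarithm of the right side is $\sim \tfrac{k}{2}(k-1)! - \tfrac{d-1}{2}k! + O(k\log B)$, which is $\to -\infty$ like $-\tfrac{d-1}{2}k!$; so $\Lk^{d-1}(\Ek_k(B))$ decays super-exponentially in $k$, fast enough to beat any polynomial-in-$B$ loss.

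The final bookkeeping step is to feed these estimates into \eqref{def-excep1}. For the inner double union over $B\geq 16$ and $k \geq k_0 + \lfloor\Ups\Psi(B)\rfloor$, I would use $\sum_{k\geq k_0+\lfloor\Ups\Psi(B)\rfloor}\Lk^{d-1}(\Ek_k(B)) \lesssim \Lk^{d-1}(\Ek_{k_0+\lfloor\Ups\Psi(B)\rfloor}(B))$ (super-exponential tail dominated by first term), and then choose $\Ups$ large enough that $k_0 + \lfloor\Ups\Psi(B)\rfloor$ is large enough relative to $\log B$ to make the whole sum over $B$ converge: one needs $(k_0+\Ups\Psi(B))!$ to grow faster than $\log B$, i.e. roughly $\Ups\Psi(B)\log(\Ups\Psi(B)) \gtrsim \log\log B$, which holds for any fixed $\Ups\ge 1$ since $\Psi(B) = \log B/\log\log B$ already exceeds $\log\log B$ for large $B$ — so even $\Ups = 1$ works once the super-exponential decay is in hand, though taking $\Ups$ somewhat large gives room. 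Thus $\sum_{B\geq 16}\sum_{k\geq k_0+\lfloor\Ups\Psi(B)\rfloor}\Lk^{d-1}(\Ek_k(B)) < \infty$ and in fact $\to 0$ as $k_0\to\infty$, so the intersection over $k_0$ has measure zero by continuity of measure. The main obstacle, I expect, is the combinatorial/counting estimate in the middle paragraph: one must be careful that the finite data $(\bz_\ell)$ \emph{plus} the spectral parameter $\om$ really do confine $\vec s$ to a small set in \emph{all} $d-1$ directions (not just the $\proj_3$ plane), which requires simultaneously exploiting the contracting directions $\alpha_3,\dots,\alpha_d,\alpha_2$ and the Salem/irreducibility structure — and that the number of data choices, $\sim (n_{k-1})^{k/2}$, genuinely loses only a factor that is negligible against $n_k^{-(d-1)/2}$, which is exactly why the lacunary scales $n_k = 2^{k!}$ (with $n_k/n_{k-1}^{k}\to\infty$) are chosen.
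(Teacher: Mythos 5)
Your overall scheme (count the admissible digit strings $(\bz_\ell)$, use them to pin down $\vec s$ via \eqref{eq2a}, sum over $k$ and $B$, let $k_0\to\infty$) is the same Erd\H{o}s--Kahane strategy as the paper's, and your elimination of $\om$ through \eqref{om-eq} is correct in spirit. But the middle step — the one you yourself flag as the main obstacle — has a genuine gap. The data (E1)--(E3) does \emph{not} confine $\vec s$ in all $d-1$ free directions. In the $\bbe_2^*$-direction the identity \eqref{eq2} reads $\om\langle\vec s,\bbe_2^*\rangle=\Phi^{(2)}_n(1/\alpha_2)+\alpha_2^{-n}\langle \beps_n,\bbe_2^*\rangle$ with $\alpha_2=\alpha^{-1}$, so the error term is $\alpha^{n}\langle\beps_n,\bbe_2^*\rangle$, which \emph{expands} like $\alpha^{n_{k-1}}$; your claim that boundedness of $\beps_n$ localizes $\om\langle\vec s,\bbe_2^*\rangle$ to within $O(\alpha^{-n_{k-1}})$ has the contraction backwards. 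Likewise, in the other unit-modulus directions $\bbe_5^*,\bbe_7^*,\dots$ condition (E3) gives no smallness at all — only the trivial bound \eqref{eps-bound2} — so the radius there is $O(B)$, not $O(B(k/n_k)^{1/2})$. Hence your measure bound $\lesssim (Bk/n_k)^{(d-1)/2}$ per digit configuration, and the ensuing ``decay like $2^{-\frac{d-1}{2}k!}$, beating any polynomial in $B$, so even $\Ups=1$ works,'' are unjustified. The paper sidesteps this entirely: it only estimates the two-dimensional quantity $\Lk^2(\Pr_3(\Ek_k(B)))$, observing that $\Lk^2(\Pr_3(\Ek))=0$ already forces $\Lk^{d-1}(\Ek)=0$ (a null preimage under a surjective linear map), so no control in the remaining $d-3$ directions or in the $\alpha^{-1}$-direction is needed.

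This changes the bookkeeping in a way that matters for the statement being proved. With only two dimensions of localization (and the paper's crude binomial bound), the per-scale estimate is $\Lk^2(\proj_3(\Ek_k))\lesssim B^4(\#F\cdot e)^k/k^{k-1}$ — the exact cancellation $n_{k-1}^k=n_k$ leaves only $k^{-ck}$ decay, not super-exponential in $k!$ — so the factor $B^4$ (from the disk radius $\sim B^2(2k/n_k)^{1/2}$) must be beaten by taking $k\ge k_0+\lfloor\Ups\Psi(B)\rfloor$ with $\Ups$ \emph{large}: the paper needs the Claim $\Psi(B)^{\Psi(B)}\ge B^{1/2}$ and $\Ups\ge 24$ to make $\sum_B B^{4-\Ups/4}$ converge, and it also needs $\Ups\ge\Ups_0(\zeta)$ earlier to guarantee $\alpha^{n_{k-1}}\ge 2BC_1$ so that the denominator $B^{-1}-C_1\alpha^{-n_{k-1}}$ in \eqref{eq3} is harmless — a point your write-up glosses over. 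So the largeness of $\Ups$ is not an artifact: in the paper's argument it is exactly what couples $k$ to $B$ and makes the double sum in \eqref{def-excep1} finite. To repair your version you would either have to drop the unobtainable $(d-1)$-dimensional localization and redo the sum as the paper does, or genuinely sharpen the configuration count (e.g.\ using $\binom{n_{k-1}}{k/2}$ rather than $\binom{n_{k-1}}{k}$) to recover super-exponential decay in the two-dimensional projection — but as written the argument does not close.
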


\begin{proof}[Proof] We write $\Ek = \Ek(\Ups)$, but keep the dependence on $\Ups$ in mind.
In  is enough to show that $\Lk^2(\proj_3(\Ek))=0$. In view of Remark~\ref{rem}, this is equivalent to $\Lk^2(\Pr_3(\Ek))=0$, where $\Pr_3(\bx) =
\langle \bx,\bbe_3^*\rangle \in \C$. Clearly, 
\be \label{excep2}
\Pr_3(\Ek) \subseteq \bigcap_{k_0=1}^\infty \bigcup_{B=16}^\infty\ \bigcup_{k=k_0+\lfloor \Ups\cdot \Psi(B)\rfloor}^\infty \Pr_3(\Ek_k(B)).
\ee
Thus it suffices to  get an efficient cover of the set of projections $\Pr_3(\Ek_k(B))$ for $k$ sufficiently large.
Let $\vec s \in \Ek_k(B)$. This means that there exists $\om$ with $|\om|\in [B^{-1},B]$, such that the properties (E1)-(E3) hold.
We fix such an $\om$ and use \eqref{eq3} for $n=n_{k-1}$:
\begin{eqnarray}
\left|\langle \vec s, \bbe_3^*\rangle - \frac{\Phi^{(3)}_{n_{k-1}}(1/\alpha_3)}{\Phi^{(1)}_{n_{k-1}}(1/\alpha)}\right| & \le & 
B |\langle \beps_{n_{k-1}},\bbe_3^*\rangle| + \frac{C_1 C_2 B(n_{k-1}+1)}{\alpha^{n_{k-1}} (B^{-1} - C_1\alpha^{-n_{k-1}})} \nonumber \\[1.1ex]
& \le & B^2\left(\frac{2k}{n_k}\right)^{1/2} + \frac{C_1 C_2 B(n_{k-1}+1)}{\alpha^{n_{k-1}} (B^{-1} - C_1\alpha^{-n_{k-1}})}, \label{eq34}
\end{eqnarray}
where the second inequality follows from $B\ge 2$ and from (E3).

We want to make sure that the second term in \eqref{eq34} is dominated by the first one.
Note that if $\alpha^{n_{k-1}}\ge 2BC_1$, then 
\be \label{cond1}
\frac{C_1 C_2 B(n_{k-1}+1)}{\alpha^{n_{k-1}} (B^{-1} - C_1\alpha^{-n_{k-1}})} \le \frac{C_1 C_2 B^2(n_{k-1}+1)}{ \alpha^{n_{k-1}}}.
\ee
We have $n_{k-1} = 2^{(k-1)!}\ge k^2$ for $k\ge 4$, hence $\alpha^{k^2}  \ge 2BC_1$ will guarantee \eqref{cond1} for $k\ge 4$.
In the definition \eqref{def-excep1} we have
$k\ge \Ups \cdot \frac{\log B}{\log\log B}\ge \Ups\cdot (\log B)^{1/2}$ for $B\ge 16$, hence
$$
\alpha^{k^2} \ge \alpha^{\Ups^2\log B} = B^{\Ups^2 \log\alpha} \ge 2BC_1,
$$
for $\Ups \ge \Ups_0(\zeta)$.
It remains to check that
$$
\frac{C_1 C_2 B^2(n_{k-1}+1)}{ \alpha^{n_{k-1}}} \le B^2\left(\frac{2k}{n_k}\right)^{1/2},
$$
for $k\ge k_0(\zeta)$ (most importantly, independent of $B$), which is easy to see from the fact that
$n_k = 2^{k!} \ll \alpha^{2^{(k-1)!}} = \alpha^{n_{k-1}}$ for large $k$.
Finally, all of the above 
 imply that
$$
\left|\langle \vec s, \bbe_3^*\rangle - \frac{\Phi^{(3)}_{n_{k-1}}(1/\alpha_3)}{\Phi^{(1)}_{n_{k-1}}(1/\alpha)}\right|\le 2B^2\left(\frac{2k}{n_k}\right)^{1/2}
$$
for $k\ge k_0+\lfloor \Ups\cdot \Psi(B) \rfloor$, with $k_0 = k_0(\zeta)$ and $\Ups\ge \Ups_0(\zeta)$.
In view of Remark~\ref{rem}, $\Pr_3(\vec s)$ belongs to the disk of radius $2C_3B^2\cdot(2k/n_k)^{1/2}$, centered at the point given by the real and
imaginary parts of the complex number $\displaystyle{\frac{\Phi^{(3)}_{n_{k-1}}(1/\alpha_3)}{\Phi^{(1)}_{n_{k-1}}(1/\alpha)}}$, in
the invariant plane corresponding to $\alpha_3$, for some $C_3 = C_3(\zeta)$.
In view of (E1), the number of possible such expressions is bounded by the number of subsets of $[1,n_{k-1}]$ of cardinality $< k/2$, times
$(\# F)^{\frac{k}{2}}$.Thus,
$$
\Lk^2(\proj_3(\Ek_k)) \le 4C^2_3 B^4 \sum_{1\le i < k/2} {n_{k-1}\choose i}\cdot (\# F)^{k/2}\cdot \frac{2k}{n_k}\,.
$$
Using a crude upper bound 
$$
\sum_{1\le i <\ell/2} {m\choose i} < {m\choose \ell} < \frac{m^\ell}{\ell!} < \Bigl(\frac{m}{\ell}\Bigr)^\ell e^\ell\ \ \mbox{ for}\ \  m\gg \ell>1,
$$ 
we obtain for $k\ge k_0+\lfloor \Ups\cdot \Psi(B) \rfloor$:
\begin{eqnarray*}
\Lk^2(\proj_3(\Ek_k)) & \le  & 4C^2_3 B^4 \left(\frac{2^{(k-1)!}}{k}\right)^k\cdot (\# F \cdot e)^{k}\cdot \frac{2k}{2^{k!}} 
= 8C^2_3 B^4 \frac{(\# F \cdot e)^k}{k^{k-1}}\,.
\end{eqnarray*}
Next, assuming that $k_0=k_0(\zeta)$ is sufficiently large, we have
$$
\frac{(\# F \cdot e)^k}{k^{k-1}} < 2^{-k} k^{-k/2} < 2^{-k_0} k^{-k/2}, \ \ \ \mbox{for}\ \ \ k\ge k_0,
$$
and hence
\begin{eqnarray}
\Lk^2\left(\bigcup_{k=k_0+\lfloor \Ups\cdot \Psi(B)\rfloor}^\infty \Pr_3(\Ek_k(B))\right) & \le & 8C^2_3 B^4 \cdot 2^{-k_0} \cdot\!\!\!\!\!\! \nonumber
\sum_{k=\lfloor \Ups\cdot \Psi(B)\rfloor}^\infty k^{-k/2} \\[1.1ex]
& < & C_4B^4\cdot 2^{-k_0} (\Ups\cdot \Psi(B))^{-\Ups\cdot \Psi(B)/2}. \label{bound47}
\end{eqnarray}
for some $C_4 = C_4(\zeta)$, 
since the series decays super-exponentially.

\begin{claim} \label{claim}
We have
\be \label{eq:claim}
\Psi(B)^{\Psi(B)} \ge B^{1/2},\ \ \ \mbox{for all}\ \ B\ge 16.
\ee
\end{claim}

\begin{proof}[Proof of the claim]
Note that $\Psi(B)$ is positive for $B\ge 16$.
We have
$$
\log\Bigl(\Psi(B)^{\Psi(B)}\Bigr) = \frac{\log B}{\log\log B} \cdot \log\Bigl(\frac{\log B}{\log\log B}\Bigr) = \log B 
\Bigl(1 - \frac{\log\log\log B}{\log\log B}\Bigr) \ge \half \log B,\ \ B\ge 16,
$$
and the claim follows.
\end{proof}

We will have $\Ups\ge 2$, so that 
$$
(\Ups\cdot \Psi(B))^{-\Ups\cdot \Psi(B)/2} \le \Psi(B)^{-\Ups\cdot \Psi(B)/2} \le B^{-\Ups/4},
$$
in view of the claim.
Together with \eqref{bound47}, this yields
$$
\Lk^2\left(\bigcup_{k=k_0+\lfloor \Ups\cdot \Psi(B)\rfloor}^\infty \Pr_3(\Ek_k(B))\right)\le C_4B^4 \cdot 2^{-k_0} B^{-\Ups/4}.
$$
Assuming that $\Ups \ge 24$, we obtain
$$
\Lk^2\left(\bigcup_{B=16}^\infty\ \bigcup_{k=k_0+\lfloor \Ups\cdot \Psi(B)\rfloor}^\infty \Pr_3(\Ek_k(B))\right) < C_4 \cdot 2^{-k_0}
\sum_{B=16}^\infty B^{4-\Ups/4} < C_4 (\pi^2/6)\cdot 2^{-k_0},
$$
using the formula for the sum of the series $\sum n^{-2}$. The latter tends to $0$ as $k_0\to \infty$, and hence $\Lk^2(\Pr_3(\Ek))=0$ by \eqref{excep2}, 
as desired. The proof of the lemma
is complete.
\end{proof}

Next we are going to show local estimates for spectral measures of Lip-cylindrical functions, for suspension flows with $\vec s\notin \Ek$.
Suppose that $\vec s\notin \Ek$, then there exists $k_0=k_0(\vec s)\in \N$ such that $\vec s\notin \Ek_k(B)$ for all $B\ge 16$ and 
$k\ge k_0 + \lfloor\Ups \cdot \Psi(B)\rfloor$. Fix an $\om$, with $|\om|\in [B^{-1},B]$. By the definition of $\Ek(B)$, one of the properties
(E1), (ii), (E3)  is violated.

Consider a scale $n_k$ for $k \ge 2(k_0 + \lfloor\Ups \cdot\Psi(B)\rfloor$), more precisely,  assume that
$$
\alpha^{2^{k!}} \le R < \alpha^{2^{(k+1)!}}.
$$
We copy \eqref{eq:product} for the reader's convenience:
$$
\left|S^f_R((x,t),\omega)\right| \leq C_1\|f\|_\infty  \cdot \min\{1, |\om|^{-1}\}
		 \cdot R\prod_{n=0}^{\floor{\log_{\alpha}R}} \left( 1-\lambda \cdot
		\bigl\|(\Sf_\zeta^{\sf T})^n \om \vec s\bigr\|^2_{_{\R^d/\Gam^*}}\right).
		$$

If (E1) is violated, then $\#\{\ell \le n_k: \bz_\ell\ne 0\} \ge k/2$, hence, in view of Lemma~\ref{lem1}(iii), we have
$\#\{\ell\le n_k: \|\beps_\ell\| \ge c_{A,L}\} \ge k/4$, which implies
\be \label{bound1}
\left|S^f_R((x,t),\omega)\right| \leq C_1\|f\|_\infty  \cdot \min\{1, |\om|^{-1}\}
		  \cdot R  \cdot (1-\lam\cdot c_{A,L}^2)^{k/4}.
\ee
We have 
\be \label{bound2}
\log\log_\alpha R <  (k+1)! < k^k \implies\log\log \log_\alpha R < k\log k \implies k> 
\frac{\log\log \log_\alpha R}{\log\log\log \log_\alpha R}
\ee
for $R\ge \alpha^{16}$, using an elementary fact:
\be \label{elem}
k\log k>T\implies k > \frac{T}{\log T}\ \ \mbox{for}\ \ T\ge 2.
\ee
Combining \eqref{bound2} with \eqref{bound1}, we obtain for $k\ge 2(k_0 + \lfloor\Ups \cdot \Psi(B)\rfloor)$:
\be \label{bound3}
 \left|S^f_R((x,t),\omega)\right| \leq C_1\|f\|_\infty  \cdot \min\{1, |\om|^{-1}\}
		 \cdot R  \cdot  \exp[-\beta_1 \Psi(\log\log_\alpha R)],\ \ 
R\ge R_0,
\ee
for $\beta_1= -\frac{1}{4} \ln(1-\lam\cdot c_{A,L}^2)$ and
$$
R_0 = \alpha^{2^{{\scriptstyle{2(k_0+\lfloor \Ups\cdot \Psi(B)\rfloor)!}}}}. 
$$
We can  simplify this by getting a slightly worse bound, using that 
$2(k_0+\lfloor \Ups\cdot \Psi(B)\rfloor)\le \lfloor\gam\cdot\Psi(B)\rfloor$, where $\gam = 4\max\{k_0+1,\Ups\}$, since $\Psi(B)\ge 2$ for $B\ge 16$.
Thus we have \eqref{bound3} with
\be \label{R0}
R_0 = \alpha^{2^{\scriptstyle{\lfloor\gam \cdot\Psi(B)\rfloor!}}}. 
\ee

Now we can apply Lemma~\ref{lem-easy1} to obtain \eqref{bound44}, as desired. Note that $\gam$ depends on $\vec s$, since 
$k_0$ depends on $\vec s$.

If (E1) holds at the scale $n_k$, then 
$$
\exists\, q\in (k/2,k):\  \bz_\ell=0\ \mbox{for all}\ \ell\in (n_{q-1},n_q).
$$
Recall that $\vec s\notin \Ek_q(B)$ by assumption, since $q\ge k/2 \ge k_0 + \lfloor\Ups\cdot \Psi(B) \rfloor$. If (E1) is violated at scale
$n_q$, we get a bound like \eqref{bound1}, only with $(1-\lam\cdot c_{A,L}^2)^{q/4} \le (1-\lam\cdot c_{A,L}^2)^{k/8}$, 
and we can conclude to obtain the analog of \eqref{bound44} 
similarly. The remaining case is when both (E1) and (E2) hold at scale $n_q$. Then (E3) must be false at that scale,
that is,
$$
|\beps_{n_{q-1}}|_3 \ge (2q/n_q)^{1/2}.
$$
Since (E2) holds, \eqref{eq-step} implies
that $|\beps_\ell|_3 =|\beps_{n_{q-1}}|_3 \ge (2q/n_q)^{1/2}$ for all $\ell\in (n_{q-1},n_q)$. Since $\|\beps_n\| \ge |\beps_n|_3$, we obtain from 
\eqref{eq:product} that
\begin{eqnarray}
 \left|S^f_R((x,t),\omega)\right| & \le & C_1\|f\|_\infty  \cdot \min\{1, |\om|^{-1}\}\cdot R \cdot \left(1 - \lam(2q/n_q)\right)^{n_q-n_{q-1}} \nonumber \\
                               & \le & C_1\|f\|_\infty  \cdot \min\{1, |\om|^{-1}\}\cdot R \cdot   \exp\bigl(-2\lam q(1 - \textstyle{\frac{n_{q-1}}{n_q}})\bigr) \nonumber   \\
                               & \le & C_1\|f\|_\infty  \cdot \min\{1, |\om|^{-1}\} \cdot R\cdot  \exp(-\lam k/2), \label{order}
\end{eqnarray}
because $n_{q-1}/n_q < 1/2$ for all $q\ge 2$, and \eqref{order}
 is again of the same order as \eqref{bound1}. In all cases the bound for $R_0$  is the same, since it comes from
$\alpha^{2^{k!}} \le R$ for  $k \ge 2(k_0 + \lfloor\Ups \cdot \Psi(B)\rfloor)$. Again we apply Lemma~\ref{lem-easy1}, and the proof is complete.
\end{proof}

\subsection{Conclusion of the proof: ``glueing'' near zero and at infinity} \label{sec:glue}

 Here we finally prove \eqref{eq:main}.
In view of $\sig_f(\R) = \|f\|_2^2\le \|f\|_\infty^2$, it is enough to obtain this inequality for $r\in (0,\rho_0)$, with a uniform constant 
$\rho_0$, since we can always ensure \eqref{eq:main} for $r\ge \rho_0$ by increasing the constant $\wt C_{\zeta,\vec s}$. Thus, we can immediately
assume that $|\om|\notin [\frac{1}{16}, 16]$, since otherwise \eqref{bound44} gives a required bound with a uniform $r_0$. Under this assumption we have
$$
B = B(\om) = \max\{\lceil|\om|\rceil, \lceil|\om|^{-1}\rceil\},
$$
see \eqref{def-R0}.
By \cite[Theorem 1]{Adam} (we are in case (iii) of Adamczewski's theorem), together with \cite[Prop.\,5.1]{BuSo18b}, whose proof is valid for $|\theta_2|=1$ as well, we obtain that if $f$ is a Lip-cylindrical function of mean zero, then 
\be \label{adam}
\sig_f(B_r(0)) \le C_5\|f\|_\infty^2\cdot (\log(1/r))^{2d-2}r^2\le C_5 \|f\|_\infty^2 \cdot r,\ \ r\in (0,\rho_0),
\ee
for some $C_5 = C_5(\zeta)$ and $\rho_0>0$, depending on $d$. This provides a much stronger bound than \eqref{eq:main} for $\om=0$.

Next we are going to ``glue'' this bound with \eqref{bound44} for $|\om|$ sufficiently small.
Thus, assume  that $|\om|\in (0,\frac{1}{16})$. We have by \eqref{adam}:
$$\sig_f(B_r(\om))\le \sig_f(B_{r + |\om|}(0)) \le C_5 \|f\|_\infty^2 (r+|\om|).$$
If $r>|\om|$, then $r+\om <  2r$, and  \eqref{eq:main} follows. If
\be \label{r-bound}
r \le r_0(\om) = (1/2) \alpha^{-2^{^{\scriptstyle{\lfloor\gam \cdot\Psi(B(\om))\rfloor!}}}},
\ee
then \eqref{bound44} implies the desired inequality. It remains to consider the case when $r_0(\om) < r \le |\om|$. Then \eqref{adam} implies
$$
\sig_f(B_r(\om)) \le \sig_f(B_{r + |\om|}(0)) \le 2C_5\|f\|_\infty^2\cdot |\om| \le 4C_5 \|f\|_\infty^2\cdot B(\om)^{-1},
$$
using that $B(\om) \le 2|\om|^{-1}$ for $|\om|\in (1,\frac{1}{16})$. In the next lemma we
are going to verify that this yields an upper bound of the same order as \eqref{bound44} for $|\om|$ and $r$ sufficiently small.

\begin{lemma} \label{lem:glue}
There exist $c_6 = c_6(\zeta,\vec s)\in (0,1)$ and a uniform constant $\rho_0>0$ such that if $B(\om)^{-1}\le c_6$ and $r\in (0,\rho_0)$, 
then
\be \label{om-bound}
r > r_0(\om) \implies B(\om)^{-1} <  \exp\left(-\frac{\beta \log\log\log_\alpha (1/2r)}{\log\log\log \log_\alpha (1/2r)}\right) =
\exp\bigl(-\beta \Psi(\log\log_\alpha(1/2r))\bigr).
\ee
\end{lemma}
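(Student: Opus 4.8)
The plan is to unwind the definitions of $r_0(\om)$ in \eqref{r-bound} and of the rate $h_\beta$, and to reduce \eqref{om-bound} to an elementary asymptotic comparison between $\Psi(B)$ and $\ln B$, where $B=B(\om)$. First I would set $m:=\lfloor\gam\cdot\Psi(B)\rfloor$ and note from \eqref{r-bound} that $1/(2r_0(\om))=\alpha^{2^{m!}}$, hence (recall that ``$\log$'' is the base-$2$ logarithm) $\log\log_\alpha(1/2r_0(\om))=m!$. Since $x\mapsto\log\log_\alpha x$ is increasing for $x>1$ and $r\mapsto 1/(2r)$ is decreasing, the hypothesis $r>r_0(\om)$ gives
\be \label{pp:tau}
\tau:=\log\log_\alpha(1/2r) < m!.
\ee
I would then choose $\rho_0$ small enough — depending only on $\alpha$, hence on $\zeta$; concretely $\rho_0=\tfrac12\alpha^{-2^{16}}$ works — that $r\in(0,\rho_0)$ forces $\tau\ge 16$, and I would shrink $c_6=c_6(\zeta,\vec s)\in(0,1)$ enough that $B^{-1}\le c_6$ forces $m\ge 16$ (possible since $\Psi(B)\to\infty$). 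As $\Psi$ is increasing on $[16,\infty)$ and $16\le\tau<m!$, \eqref{pp:tau} yields $\Psi(\tau)\le\Psi(m!)$.

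The key step is the estimate $\Psi(m!)\le m$ for $m\ge 16$. Indeed $\log(m!)=\sum_{j=1}^{m}\log j\le m\log m$, while for $m\ge 16$ one has $\log(m!)\ge\sum_{j=\lceil m/2\rceil}^{m}\log j\ge\tfrac{m}{2}(\log m-1)\ge\tfrac{m\log m}{4}$, so that $\log\log(m!)\ge\log m+\log\log m-2\ge\log m$ (using $\log\log m\ge 2$ for $m\ge 16$); hence $\Psi(m!)=\log(m!)/\log\log(m!)\le m$. Combining this with $\Psi(\tau)\le\Psi(m!)$ and $m\le\gam\cdot\Psi(B)$, I obtain
\be \label{pp:key}
\Psi(\tau)\ \le\ \Psi(m!)\ \le\ m\ \le\ \gam\cdot\Psi(B)\ =\ \frac{\gam}{\ln 2}\cdot\frac{\ln B}{\log\log B}.
\ee

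To conclude, observe that \eqref{om-bound} amounts to $B^{-1}<\exp(-\beta\Psi(\tau))$, i.e.\ to $\ln B>\beta\Psi(\tau)$; by \eqref{pp:key} it is enough that $\ln B>\tfrac{\beta\gam}{\ln 2}\cdot\tfrac{\ln B}{\log\log B}$, that is, $\log\log B>\beta\gam/\ln 2$. Since $\beta=\beta(\zeta)$ and $\gam=\gam(\zeta,\vec s)$ are fixed, this holds once $B$ is sufficiently large, so I would simply shrink $c_6=c_6(\zeta,\vec s)$ further, if necessary, so that $B^{-1}\le c_6$ guarantees it; this yields \eqref{om-bound} with $\rho_0$ as above.

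I do not expect a genuine obstacle here: the lemma is a routine asymptotic comparison. The one point that requires care is that the available slack is only a single $\log\log$ factor — the chain \eqref{pp:key} works precisely because $\Psi(B)/\ln B=(\ln 2\cdot\log\log B)^{-1}\to 0$, and the argument would collapse were the rate $h_\beta$ of the form $\exp(-\beta\log\log_\alpha(1/2r))$ rather than $\exp(-\beta\,\Psi(\log\log_\alpha(1/2r)))$. One must also keep the two smallness thresholds separate: $\rho_0$ serves only to make $\tau$ (and with it $m!$) large enough to invoke monotonicity of $\Psi$, whereas $c_6$ makes $B$, hence $m$ and $\log\log B$, large enough for \eqref{pp:key} and the final inequality.
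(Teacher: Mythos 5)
Your proof is correct and is essentially the paper's argument in direct rather than contrapositive form: you pass from $r>r_0(\om)$ to $\Psi(\tau)\le\Psi(m!)\le m\le\gam\Psi(B)$ using monotonicity of $\Psi$ and $\Psi(m!)\le m$, while the paper assumes the failure of the bound on $B(\om)^{-1}$ and inverts $\Psi$ via the implication $\Psi(T)\ge A\Rightarrow\log T\ge A\log A$ to force $r\le r_0(\om)$. Both routes hinge on the same comparison of $m!$ with $m^m$ and reduce to the same final condition $\log\log B>\beta\gam/\ln 2$, absorbed into the choice of $c_6(\zeta,\vec s)$, so no further changes are needed.
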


\begin{proof} We write $B=B(\om)$ and prove the contrapositive. We have, denoting $\wt\beta = \beta/\ln 2$:
\be \label{imp1}
\exp\bigl(-\beta \Psi(\log\log_\alpha(1/2r))\bigr) \le B(\om)^{-1} \implies \wt\beta^{-1}\log B  \le \Psi(\log\log_\alpha(1/2r)).
\ee
Recall \eqref{elem}, which implies that
\be \label{elem2}
\Psi(T) \ge A \implies \log T \ge A\log A,\ \ \ \mbox{for}\ \ T\ge 4.
\ee
Denote $\Phi(A) = A\log A$.
Now, \eqref{imp1} and \eqref{elem2} yield
$$
\log\log\log_\alpha(1/2r) \ge \Phi(\wt\beta^{-1}\log B),\ \ \ \mbox{for}\ \ 0< r\le (1/2)\alpha^{-16},
$$
hence
$$
r \le (1/2) \alpha^{-2^{^{\scriptstyle{2^{^{\scriptstyle{\Phi(\wt\beta^{-1}\log B)}}}}}}}.
$$
In order to verify \eqref{r-bound}, it suffices to check that 
\be \label{bound51}
2^{\Phi(\wt\beta^{-1}\log B)} \ge \gam\Psi(B)^{\gam\Psi(B)},
\ee
equivalently,
$
\Phi(\wt\beta^{-1}\log B) \ge \Phi\bigl(\gam\Psi(B)\bigr).
$
Since $\Phi$ is an increasing function, the latter reduces to showing that
$$
\wt\beta^{-1}\log B\ge \gam\Psi(B) = \frac{\gam \log B}{\log\log B},
$$
which clearly holds for $B$ sufficiently large, depending on $\beta$ and $\gam$, that is, for $B(\om)^{-1}\le c_6$, for some constant $c_6 = c_6(\zeta,\vec s)\in
(0,1)$. This completes the proof of the lemma.
\end{proof}

It remains to consider the case when $|\om|>16$, which is actually analogous, since we can
rely on the factor $\min\{1, |\om|^{-1}\}$ in \eqref{eq:product}. If $B(\om)^{-1} > c_6$, then\eqref{def-R0} and \eqref{bound44} provide the desired bound with a
uniform $r_0(\om)$. If $r\le r_0(\om)$, again \eqref{bound44} applies. The remaining case is when $B(\om)^{-1} \le  c_6$, $r>r_0(\om)$, 
and we can assume without loss of generality that $r \in (0,\rho_0)$. But then Lemma~\ref{lem:glue} implies the desired inequality.

The proof of \eqref{eq:main} is complete. Finally, the assertion on quantitative mixing follows from Remark~\ref{rem:QWM}. This finishes the proof of 
Theorem~\ref{th:main}.




\section{Proof of lower bounds (no H\"older QWM for any suspension flow)}

\subsection{Some algebraic number theory (see e.g.\,\cite{Bug04})}.	A \textit{number field} is a finite field extension $K$ of $\Q$, and the \textit{degree} of $K$ is the dimension of $K$ as a vector space over $\Q$. An element $\alpha\in K$ is an \textit{algebraic integer} if it is a root of a monic polynomial with coefficients in $\Z$. Algebraic integers in $K$ form a ring denoted by $\mathcal{O}_K$.
	
	Let $K_1,K_2$ be two number fields. A $\Q$-\textit{homomorphism} between $K_1$ and $K_2$ is a field homomorphism $\sigma: K_1 \longrightarrow K_2$ such that $\sigma(r) = r$, for every $r\in\Q$. It is well known that there exist exactly $d$ distinct $\Q$-homomorphisms between a number field $K$ of degree $d$ and $\C$. We will call these $\Q$-homomorphisms between $K$ and $\C$ the \textit{embeddings} of $K$ into $\C$. If the image $\sigma(K)$ is contained in $\R$, we will say $\sigma$ is a \textit{real} embedding. Note that if $\sigma$ is not real, then $\overline{\sigma}$ is another embedding, and  we will call $\{\sigma,\overline{\sigma}\}$ a \textit{conjugate pair}.
	
	Let $\alpha_1$ be an algebraic number and $\alpha_2,\dots,\alpha_d$ the rest of the roots of its minimal polynomial. In the particular case that $K = \Q(\alpha_1)$, the $d$ distinct embeddings are completely determined by $\sigma_j(\alpha_1) = \alpha_j$, $j=1,\dots,d$. For $\eta\in K$, we call $\sigma_1(\eta),\dots,\sigma_d(\eta)$ the \textit{conjugates} of $\eta$. We define the \textit{trace} of $\eta\in K$ by $\textup{Tr}(\eta) = \sum_\sigma \sigma(\eta)$, where the sum runs over all embedding of $K$ in $\C$. In particular, if $\eta\in\mathcal{O}_K$ then $\textup{Tr}(\eta)\in\Z$.

\subsection{Proof of Theorem~\ref{th:noQWM}}
This is a minor variation of the proof of \cite[Theorem 2.7]{BuMMSo25}, for which we need a version of \cite[Lemma 6.2]{BuMMSo25}.
Thus we introduce notation consistent with \cite{BuMMSo25} rather than with  the earlier sections here.

Let $A\in GL(d,\Z)$ be an integer matrix, irreducible over $\Q$, whose characteristic polynomial is the minimal polynomial of
 a Salem number $\alpha$ of degree $d=2m+2$. Denote by $\alpha_0=\alpha^{-1}$,
  $\alpha_1,\alpha_2 \dots \alpha_{m}$, the real and complex conjugates of $\alpha$ in the (closed) upper half-plane, and let  $\sigma_0, \sigma_1,\sigma_2,\dots,\sigma_{m}$ be the corresponding embeddings.

Let $\{e_i\}^d_{i=1}$ be an eigenbasis of $A$ with entries in $\Z[\alpha]^d$. Consider a non-zero vector $\vec{s}\in\R^d$ such that
\[
\vec{s} = e_1 + C_2e_2 + \dots + C_de_d,
\]
that is, $C_1=1$. It is sufficient to consider such roof vectors for the suspension flow, since multiplying $\vec s$ by a (positive)
constant results in a
measurably isomorphic $\R$-action.
\begin{prop} \label{prop:noQWM}
	For every $\eps>0$ there exists $\eta\in\Z[\alpha]$ such that 
	\begin{equation} \label{dist1}
	\norm{\eta A^n\vec{s}}_{\R^d/\Z^d} < \eps, \quad \forall n\geq0.
	\end{equation}
\end{prop}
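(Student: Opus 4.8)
\medskip
\noindent\emph{Proof sketch.}
The plan is to exploit the Galois action on $\Q(\alpha)$ together with the defining feature of a Salem number: every conjugate of $\alpha$ other than $\alpha$ itself has modulus at most $1$. Write $e_1=(v_1,\dots,v_d)$ with $v_k\in\Z[\alpha]$; this is the eigenvector of $A$ for $\alpha$ occurring in $\vec s$, so $C_1=1$. For $j=2,\dots,d$ let $\sigma_j$ denote the embedding of $\Q(\alpha)$ into $\C$ with $\sigma_j(\alpha)=\lambda_j$, the $j$-th eigenvalue of $A$. Since the $\alpha$-eigenspace is a line spanned by $e_1$, the other eigenspaces are spanned by the entrywise images $\sigma_j(e_1)$ (because $A$ has integer entries), and as the characteristic polynomial of $A$ is separable these $d$ vectors form an eigenbasis; replacing the eigenbasis used in $\vec s$ by $\{e_1,\sigma_2(e_1),\dots,\sigma_d(e_1)\}$ (which alters the coefficients $C_j$, $j\ge2$, but keeps $C_1=1$), we may assume $e_j=\sigma_j(e_1)$. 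Then for $\eta\in\Z[\alpha]$ and $n\ge0$ the $k$-th coordinate of $\eta A^n\vec s$ equals
\[
(\eta A^n\vec s)_k \;=\; \eta\,\alpha^n v_k \;+\; \eta\sum_{j=2}^{d} C_j\,\lambda_j^{\,n}\,\sigma_j(v_k),
\]
using $C_1=1$, $\lambda_1=\alpha$, $\sigma_1=\mathrm{id}$.

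The crucial step is a trace identity. Since $\eta\,\alpha^n v_k\in\Z[\alpha]$ is an algebraic integer, its trace $\mathrm{Tr}_{\Q(\alpha)/\Q}(\eta\,\alpha^n v_k)=\sum_{j=1}^{d}\sigma_j(\eta)\,\lambda_j^{\,n}\,\sigma_j(v_k)$ lies in $\Z$. Subtracting this from the display above yields
\[
(\eta A^n\vec s)_k \;\equiv\; \sum_{j=2}^{d}\bigl(\eta C_j-\sigma_j(\eta)\bigr)\,\lambda_j^{\,n}\,\sigma_j(v_k)\pmod{\Z}.
\]
This is exactly where the Salem hypothesis is used: $|\lambda_j|\le1$ for all $j\ge2$, so with $M:=\max_{j,k}|\sigma_j(v_k)|$ (a constant depending only on $A$) we obtain the bound
\[
\bigl\|\eta A^n\vec s\bigr\|_{\R^d/\Z^d}\;\le\; M\sum_{j=2}^{d}\bigl|\eta C_j-\sigma_j(\eta)\bigr|\qquad\text{for every }n\ge0 ,
\]
which is \emph{uniform in $n$}. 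Hence \eqref{dist1} follows once we produce $\eta\in\Z[\alpha]\setminus\{0\}$ making the right-hand side $<\eps$.

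For that, consider the group homomorphism $L\colon\Z[\alpha]\to\R^{d-1}$, $L(\eta)=\bigl(\eta C_j-\sigma_j(\eta)\bigr)_{j=2}^{d}$, where each conjugate pair of coordinates is viewed as one copy of $\C\cong\R^2$, so that the target is genuinely $\R^{d-1}$. If $\ker L\ne\{0\}$, any nonzero $\eta\in\ker L$ already does the job (indeed then $\|\eta A^n\vec s\|_{\R^d/\Z^d}=0$, i.e.\ $\eta$ is an eigenvalue of the flow). If $\ker L=\{0\}$, then $L(\Z[\alpha])$ is a subgroup of $\R^{d-1}$ of rank $d$; since a discrete subgroup of $\R^{d-1}$ is free of rank at most $d-1$, the subgroup $L(\Z[\alpha])$ cannot be discrete, so $0$ is a non-isolated point of it and there are nonzero $\eta\in\Z[\alpha]$ with $\sum_{j\ge2}|\eta C_j-\sigma_j(\eta)|$ arbitrarily small. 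Either way, \eqref{dist1} holds.

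\medskip
\noindent I do not anticipate a genuine obstacle: the entire content is the trace identity above, and the only point demanding attention is \emph{why} the Salem assumption is essential --- it is precisely $|\lambda_j|\le1$ for $j\ge2$ that keeps the error terms bounded uniformly in $n$. (If some $|\lambda_j|>1$, as in the situations where H\"older QWM is known, these terms grow and the argument collapses, consistently with Theorem~\ref{th:main} being essentially the best outcome available in the Salem case.) The rank-versus-dimension count for $L(\Z[\alpha])$ is elementary.
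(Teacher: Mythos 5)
Your proof is correct, and its first half is the same computation as the paper's: you expand $\eta A^n\vec s$ in the Galois-conjugate eigenbasis, subtract the rational integer $\mathrm{Tr}(\eta\alpha^n v_k)$, and use $|\lambda_j|\le 1$ for $j\ge 2$ (the Salem property) to get a bound on $\norm{\eta A^n\vec s}_{\R^d/\Z^d}$ by $\sum_{j\ge2}|\eta C_j-\sigma_j(\eta)|$, uniformly in $n$; this is exactly the identity behind the paper's display \eqref{dist2}, and your normalization $e_j=\sigma_j(e_1)$ arguably makes the bookkeeping cleaner. Where you diverge is the existence step. The paper views $\eta\mapsto(\eta,\,C_2\eta-\sigma_0(\eta),\,\dots)$ as an embedding of $\Z[\alpha]$ onto a lattice in $\R^d$ (non-degeneracy checked via a Vandermonde determinant) and applies Minkowski's first theorem, which yields a \emph{quantitative} statement: a nonzero $\eta$ with all error coordinates $<\eps$ and $|\eta|\le E$ with $E\asymp\eps^{-(d-1)}$ as in \eqref{eq:Mink}. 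You instead argue softly: the map $L(\eta)=(\eta C_j-\sigma_j(\eta))_{j\ge2}$ lands in $\R^{d-1}$, so either $\ker L\ne\{0\}$ (then the norm is exactly zero) or the image is a rank-$d$ subgroup of $\R^{d-1}$, hence non-discrete, hence has nonzero elements arbitrarily close to $0$. This rank-versus-dimension argument is correct and more elementary, and it suffices for the proposition as stated and for the proof of Theorem~\ref{th:noQWM}; what it gives up is the explicit control on the size of $\eta$ in terms of $\eps$, which the Minkowski route provides and which the paper invokes again in Appendix B when discussing quantitative lower bounds for the Fourier decay of Salem Bernoulli convolutions.
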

\begin{proof}
	The argument is  very similar to that  of \cite[Lemma 6.2]{BuMMSo25}. Denote $\omega^v = \omega \langle e_1,v\rangle$, for $v\in\Z^d$. Then,
	\begin{align}
		\langle \omega A^n\vec{s}, v\rangle &= \Tr(\omega^v\alpha^n) - \sigma_0(\omega^v)\alpha^{-n} - \sum_{j=1}^{m} \bigl(\sigma_j(\omega^v)\alpha^n_j + \overline{\sigma_j(\omega^v)}\alpha^{-n}_j\bigr) \nonumber \\
		& \quad +  C_2\omega^v\alpha^{-n} + \omega^v\sum_{j=1}^{m} \bigl(C_{2j+1}\alpha^n_j + \overline{C_{2j+1}}\alpha^{-n}_j\bigr) \nonumber \\
		&= \Tr(\omega^v\alpha^n) + (C_2\omega^v - \sigma_0(\omega^v))\alpha^{-n} + \sum_{j=1}^{m}  \bigl(H_j\alpha_j^{n} + \overline{H_j}\alpha_j^{-n}\bigr), \label{dist2}
	\end{align}
	where $H_j = C_{2j+1}\omega^v - \sigma_j(\omega^v)$. Consider the following embedding of $\Q(\alpha)$ into $\R^2\oplus \C^m \simeq \R^d$ (we identify $\C$ with $\R\oplus\R$):
	\begin{align*}
		\tau:\omega &\mapsto \begin{pmatrix}
			\om,&
			C_2\om - \sigma_0(\om),&
			C_3\om - \sigma_1(\om),&
			C_5\om - \sigma_2(\om),&
			\dots,&
			C_{2m+1}\om - \sigma_{m}(\om)
		\end{pmatrix} \\
		&\simeq \begin{pmatrix}
			\om,&
			C_2\om - \sigma_0(\om),&
			\re(C_3\om - \sigma_1(\om)),&
			\im(\om - \sigma_1(\om)),&
			\dots,&
			\im(C_{2m+1}\om - \sigma_{m}(\om))
		\end{pmatrix}
	\end{align*}
	The image of $\Z[\alpha]$ under this map is a lattice in $\R^d$. Indeed, we just need to verify that for a basis of $\Z[\alpha]$, its image is a basis for $\R^d$. We may use the power basis given by $\{1,\alpha,\dots,\alpha^{d-1}\}$. It suffices to show that the determinant of the next square real matrix of dimension $d$ is different from zero:
	\[\Small{
		M = \begin{pmatrix}
			1 & C_2-1& \re(C_3-1) & \im(C_3-1) & \cdots & \im(C_{2m+1}-1) \\
			\alpha & C_2\alpha - \alpha^{-1} & \re(C_3\alpha - \alpha_1) & \im(C_3\alpha - \alpha_1) & \cdots & \im(C_{2m+1}\alpha - \alpha_{m}) \\
			\vdots & \vdots & \vdots & \vdots & \vdots & \vdots \\
			\alpha^{d-1} & C_2\alpha^{d-1} - \alpha^{1-d} & \re(C_3\alpha^{d-1} - \alpha_1^{d-1}) & \im(C_3\alpha^{d-1} - \alpha_1^{d-1}) & \cdots & \im(C_{2m+1}\alpha^{d-1} - \alpha_{m}^{d-1})
	\end{pmatrix}}
	\]
	(this is just the matrix $(\tau(1),\tau(\alpha),\dots,\tau(\alpha^{d-1}))^{\sf T}$) written explicitly). By elementary operations on the columns, 
	which we can perform as vectors in $\C^d$, since it does not affect
	 whether the determinant vanishes or not, we obtain the complex matrix
	\[
	\widetilde{M} = \begin{pmatrix}
		1 & 1 & 1 & 1 & \cdots & 1 \\
		\alpha & \alpha^{-1} & \alpha_3 &  \alpha_4 & \cdots & \alpha_d \\
		\vdots & \vdots & \vdots & \vdots & \vdots & \vdots \\
		\alpha^{d-1} &  \alpha^{1-d} & \alpha_1^{d-1} &  \overline{\alpha_1}^{d-1} & \cdots & \overline{\alpha_{m}}^{d-1}
	\end{pmatrix},
	\]
	whose determinant is not equal to zero, since it is the Vandermonde matrix of different numbers.
	
	For any $\eps>0$, let $E>0$, to be specified later. Consider the system of inequalities in the variables $n_i\in \Z$, where 
	$\tau_j(\omega) = C_{2j+1}\omega - \sigma_j(\omega)$, $j=1,\dots,m$, and $\tau_0(\omega) = C_2\omega - \sigma_0(\omega)$:
	\[
	\left|\sum_{i=0}^{d-1} n_i \alpha^i\right| < E, \quad \left| \sum_{i=0}^{d-1} n_i \tau_j(\alpha^i) \right| < \eps, \quad \text{ for } j=0,1,\dots,m.
	\]
	By ``Minkowski's first theorem," (see \cite[Theorems B.1 and B.2]{Bug04}),  if
	\beq \label{eq:Mink}
	E>\const\cdot |\det(M)|\cdot \eps^{-(d-1)},
	\eeq
	then 
	there exists a non-trivial integer solution $(n_i)_{0\leq i\leq d-1}$ to the latter system. For such a solution, set $\eta = \sum_{i=0}^{d-1} n_i \alpha^i$.
	It satisfies \eqref{dist1}, in view of \eqref{dist2} and the observation that
	$$
	\norm{w}_{\R^d/\Z^d} \asymp \max_j |\langle w,{\mathbf e_j}\rangle|,\ \ w\in \R^d,
	$$
	where $\{{\mathbf e_j}\}_{j=1}^d$ is the standard basis in $\R^d$.
\end{proof}

The rest of the proof of Theorem~\ref{th:noQWM} proceeds exactly as in \cite[Section 6]{BuMMSo25}, the only difference being that the vector ${\bf 1}$ is replaced by a general vector $\vec{s}$. 
It is based on an upper bound for the lower local dimension of the spectral measure for a typical Lip-cylindrical function $f$, which uses
\cite[Lemma  6.3]{BuMMSo25}, and \cite[Theorem 4.3 and Corollary 4.4]{BuSo20}, yielding that for any $\eps>0$ and $\eta = \eta(\eps)$ from
Proposition~\ref{prop:noQWM} holds
\beq
\liminf \frac{\log\sig_f(B_r(\eta(\eps))}{\log r} < \const\cdot \eps,
\eeq
precluding H\"older QWM. The reader is referred to \cite{BuMMSo25} for details. \qed

\appendix

\section{$\log^*$ quantitative weak mixing for the self-similar suspension flow}

Define functions $x\mapsto \log^*(x),\ x\ge 1$  and $x\mapsto \log^*_{L}(x),\ x\ge L$ for $L>2$. Let $\log^k(x)$ be the $k$-th iterate of $\log$ (to base
	$2$), if it is well-defined. Similarly, let $\log_L^k(x)$  be the $k$-th iterate of $\log_L$ (log to base $L$). For $x\ge 1$ we let
	$$
	\log^*(x) := \min\{n\ge 0: \ \log^n(x)\le 1\},\ \ \ \log_L^*(x) :=\min\{n\ge 0: \ \log_L^n(x)\le 1\}.
	$$
	We start with an elementary lemma.
	
	\begin{lemma}
	Let $L>2$. Let $t_L>0$ and $K_L>0$ be defined as follows:
	$$
	\frac{\log t_L}{\log\log t_L} = \log L,\ \ \ \ \ \ K_L = \max\{4, t_L\}.
	$$
	Then
	\beq \label{ineq-star}
	\log_L^*(x) \ge \lfloor \shalf(\log^*(x) - \log^*(K_L))\rfloor.
	\eeq
	\end{lemma}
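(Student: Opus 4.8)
The plan is to reduce the whole statement to a single pointwise inequality, namely that
\[
\log^{2}(y)\le\log_{L}(y)\qquad\text{for every }y\ge K_{L},
\]
i.e.\ that one step of the faster iteration $\log_{L}$ is dominated by two steps of $\log$. Since $\log_{L}(y)=\log(y)/\log L$, this is precisely the claim that $\frac{\log y}{\log\log y}\ge\log L$ for $y\ge K_{L}$. First I would observe that, by the defining relation $\frac{\log t_{L}}{\log\log t_{L}}=\log L$, equality holds at $y=t_{L}$, that $y\mapsto\frac{\log y}{\log\log y}$ is increasing on $[K_{L},\infty)$, and that $K_{L}=\max\{4,t_{L}\}$ guarantees $\log\log y>0$ there so that all the expressions are well defined; hence the inequality holds for all $y\ge K_{L}$.

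Granting this, I would set $n:=\log_{L}^{*}(x)$ and prove by induction on $j$ that
\[
\log^{*}(x)\ \le\ 2j+\log^{*}\!\bigl(\max\{K_{L},\ \log_{L}^{j}(x)\}\bigr),\qquad 0\le j\le n.
\]
The case $j=0$ is immediate because $\log^{*}$ is non-decreasing on $[1,\infty)$ and the argument on the right is $\ge K_{L}$. For the inductive step $j\to j+1$ (with $j+1\le n$, so $\log_{L}^{j}(x)>1$): if $\log_{L}^{j}(x)\le K_{L}$ there is nothing to do, since then $\log_{L}^{j+1}(x)<K_{L}$ and the right-hand side above simply increases by $2$; if $\log_{L}^{j}(x)>K_{L}\ge4$, I would apply the pointwise inequality with $y=\log_{L}^{j}(x)$, together with $\log^{*}(y)=2+\log^{*}\!\bigl(\log^{2}(y)\bigr)$ (valid as $y>2$) and the monotonicity of $\log^{*}$, to convert the hypothesis at level $j$ into the bound at level $j+1$.

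Finally I would specialize the displayed inequality to $j=n$: since $\log_{L}^{n}(x)\le1$ the maximum equals $K_{L}$, and we obtain $\log^{*}(x)\le 2n+\log^{*}(K_{L})$, that is $n\ge\frac12\bigl(\log^{*}(x)-\log^{*}(K_{L})\bigr)$; as $n$ is a nonnegative integer this forces $n\ge\lfloor\frac12(\log^{*}(x)-\log^{*}(K_{L}))\rfloor$, which is \eqref{ineq-star}. The only real obstacle is the pointwise inequality $\log^{2}(y)\le\log_{L}(y)$ with the sharp threshold $K_{L}$ — concretely, checking the monotonicity (and branch) behaviour of $y\mapsto\frac{\log y}{\log\log y}$ near its minimum — but this is routine single-variable calculus; conceptually the lemma just records that two base-$2$ logarithms outpace one base-$L$ logarithm once the argument is large enough.
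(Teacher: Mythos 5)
Your proposal is correct and follows essentially the same route as the paper: both rest on the pointwise inequality $\log_L(y)\ge\log\log(y)$ for $y\ge K_L$ (equivalently $\frac{\log y}{\log\log y}\ge\log L$, from the defining relation at $t_L$ and monotonicity), and both iterate it, the paper in the form $\log_L^m(x)\ge\log^{2m}(x)$ and you via the equivalent bookkeeping $\log^*(x)\le 2j+\log^*(\max\{K_L,\log_L^j(x)\})$. Your version is just a more explicitly tracked induction leading to $\log^*(x)\le 2\log_L^*(x)+\log^*(K_L)$, which is the same conclusion.
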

	
	\begin{proof}
	Using that $\frac{\log x}{\log\log x}$ is increasing and positive for $x\ge 4$, we obtain for $x\ge K_L$:
	$$
	\log_L x = \frac{\log x}{\log L} \ge \log\log x,\ \ \mbox{if}\ \ \frac{\log x}{\log\log x} \ge \log L
	$$
	Iterate this and obtain
	$$
	\log_L^m(x) \ge \log^{2m}(x),\ \ \mbox{where}\ \ m = \lfloor n/2\rfloor,\ \ n = \min\{k: \log^k(x) < K_L\}.
	$$
	This implies \eqref{ineq-star}.
	\end{proof}
	
\begin{theorem} \label{th:app1}
Let $\zeta$ be a substitution of Salem type on $d$ symbols, having a Salem number $\alpha$ of degree $d$ as the PF eigenvalue of the substitution matrix
$\Sf_\zeta$. Let $\vec s$ be the PF eigenvector of $\Sf_\zeta^T$, and let $(\Xxi, \phi_t,\wt\mu_{\vec s})$ be the corresponding suspension flow (the 
self-similar flow). Then for every Lip-cylindrical function $f$ of mean zero, the spectral measure
$\sig_f$ satisfies
\beq\label{eq:ssmain}
\sig_f(B_r(\om)) \le a_\alpha \exp\bigl[-c_\alpha \log^*(r^{-1})]\ \ \ \mbox{for all}\ \ r>0 \ \ \mbox{and}\ \ \om\in \R.
\eeq
Moreover, this system has QWM with the rate $\exp\bigl[-c_\alpha \log^*(r^{-1})]$  for such functions $f$.
\end{theorem}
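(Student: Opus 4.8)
The idea is to run the argument of \cite[Sec.\,3--4]{BuSo14} in the ``vector digit expansion in base $A:=\Sf_\zeta^{\sf T}$'' language of Section~3, using that for the self-similar roof vector $\vec s$ (the PF eigenvector of $A$, normalized by $\langle\vec s,\bbe^*_1\rangle=1$, so $\vec s=\bbe_1$) one has $A^n\om\vec s=\alpha^n\om\vec s$ for every $n$, and $\langle\om\vec s,\bbe^*_j\rangle=0$ for $j\ne1$. By Lemma~\ref{lem-easy1} together with \eqref{eq:product} applied to this $\vec s$, with $R=(2r)^{-1}$ and $N:=\floor{\log_\alpha R}$,
\[
\sig_f(B_r(\om))\ \le\ C\|f\|_\infty^2\min\{1,|\om|^{-2}\}\cdot P_N(\om)^2,\qquad P_N(\om):=\prod_{n=0}^{N}\Bigl(1-\lam\,\bigl\|\alpha^n\om\vec s\bigr\|_{\R^d/\Gam^*}^2\Bigr).
\]
For $\om=0$ the product is useless, but there \eqref{adam} (which uses that $f$ has mean zero) provides a much stronger bound; for $\om$ in a punctured neighbourhood of $0$, and for $|\om|$ large, one glues exactly as in Section~\ref{sec:glue}; and for $r$ bounded away from $0$ the bound \eqref{eq:ssmain} is trivial, since $\sig_f(\R)=\|f\|_2^2\le\|f\|_\infty^2$. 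Thus everything reduces to the uniform estimate $P_N(\om)\le C_\zeta\exp[-c_\zeta\log^*(N)]$ for all $N\ge N_0(\zeta)$ and all $\om\ne0$.

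Following Section~3, write $\alpha^n\om\vec s=\bp_n+\beps_n$ with $\bp_n\in L:=\Gam^*$ the nearest lattice point (so $\|\beps_n\|\le b_L$), and $\bz_n:=\beps_n-A\beps_{n-1}\in F$, a fixed finite subset of $L$. By Lemma~\ref{lem1}(iii), whenever $\beps_{n-1}$ and $\beps_n$ are both shorter than $c_{A,L}$ we get $\bz_n=\mathbf{0}$ and $\beps_n=A\beps_{n-1}$; hence
\[
P_N(\om)\ \le\ (1-\lam c_{A,L}^2)^{\,m(N)},\qquad m(N):=\#\{n\le N:\ \|\beps_n\|\ge c_{A,L}\}\ \gtrsim\ \#\{n\le N:\ \bz_n\ne\mathbf{0}\}.
\]
The crux is the uniform lower bound $m(N)\ge c_\zeta\log^*(N)$ for $N\ge N_0(\zeta)$ and every $\om\ne0$, which I would prove by an Erd\H{o}s--Kahane type recursion on the number of nonzero digits. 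If only $K$ of the digits $\bz_n$, $n\le N$, are nonzero, there is a block $(a,a']\subset[0,N]$ of consecutive zero digits of length $\gtrsim N/K$, on which $\beps_n=A^{n-a}\beps_a$; since $A$ expands the $\bbe_1$-direction by $\alpha$ while $\|\beps_n\|\le b_L$, necessarily $|\langle\beps_a,\bbe^*_1\rangle|\lesssim\alpha^{-(a'-a)}\le\alpha^{-cN/K}$, which via \eqref{om-eq} pins $\om$ to within $\alpha^{-a}\alpha^{-cN/K}$ of the algebraic number $\Phi_a^{(1)}(1/\alpha)\in\Q(\alpha)$, an element using at most $K$ nonzero ``digit terms'' from the finite set $\{\langle z,\bbe^*_j\rangle:z\in F,\,1\le j\le d\}$. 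At this point the Salem hypothesis is essential: the conjugates $\alpha_3,\dots,\alpha_d$ lie \emph{on} the unit circle, so — unlike in the $|\alpha_2|>1$ case of \cite{BuSo14} — the components $\langle\beps_n,\bbe^*_j\rangle$, $j\ge3$, of the $\beps$-orbit of such an algebraic point do not contract; past the zero-block they rotate with constant modulus until the next nonzero digit appears. One then shows, quantitatively, that this forces either $\|\beps_n\|\ge c_{A,L}$ for a positive density of $n$ just beyond the block (and we already win at scale $N$) or a further nonzero digit at some scale $N'$ no larger than an iterated exponential of $N$, upgrading $K$ to $K+1$ at scale $N'$. Iterating from the trivial bound $m(N)\ge1$ for $N\ge N_0$ (if $\om\ne0$ the orbit $\alpha^n\om\vec s$ is never eventually in $L$, as $L$ contains no nonzero $A$-eigenvector), one can afford $\asymp\log^*(N)$ steps on the way up to scale $N$, whence $m(N)\gtrsim\log^*(N)$. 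I expect the effective, $\om$-uniform form of this ``a new digit must appear'' mechanism — controlling the bounded but non-decaying rotational conjugate components, and bounding how far ahead the next digit can lie — to be the only genuine obstacle; this loss (from the $\log$ rate of the $|\alpha_2|>1$ case down to $\log^*$) is precisely the price of the Salem borderline.

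Finally, inserting $m(N)\gtrsim\log^*(N)$ into the first display with $N=\floor{\log_\alpha(1/2r)}$, the comparison lemma above (together with the elementary facts $\log^*(cx)=\log^*(x)+O_c(1)$ and $\log^*(\log_2 x)=\log^*(x)-1$) converts $\log^*(N)$ into a quantity $\gtrsim\log^*(1/r)$; hence $\sig_f(B_r(\om))\le C\|f\|_\infty^2\min\{1,|\om|^{-2}\}\exp[-2c_\zeta\log^*(1/r)+O(1)]$, which gives \eqref{eq:ssmain} (the $\|f\|_\infty^2$, the $O(1)$, and the harmless factor $\min\{1,|\om|^{-2}\}\le1$ being absorbed into $a_\alpha$). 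The asserted QWM with the same rate then follows from Remark~\ref{rem:QWM}.
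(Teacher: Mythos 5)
Your overall architecture (twisted Birkhoff product \eqref{eq:product} specialized to the self-similar roof vector, plus Lemma~\ref{lem-easy1} and a glueing step near $\om=0$ and at infinity as in Section~\ref{sec:glue}) matches the paper, but the core of the argument is missing, and the reduction you set up is in fact the wrong one. You bound the product by $(1-\lam c_{A,L}^2)^{m(N)}$ with $m(N)=\#\{n\le N:\ \|\beps_n\|\ge c_{A,L}\}$ and then aim at the uniform estimate $m(N)\ge c_\zeta\log^*(N)$ for every $\om\ne 0$. That estimate is false: by the defining property of Salem numbers (this is exactly the mechanism behind Proposition~\ref{prop:noQWM} and Kahane's observation quoted in Appendix~B), for every $\eps>0$ there exist nonzero $\eta\in\Z[\alpha]$ with $\|\eta A^n\vec s\|$ smaller than $\eps$ for \emph{all} $n\ge 0$; taking $\om=\eta$ and $\eps<c_{A,L}$ gives $m(N)=0$ for all $N$. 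So counting only the times when $\|\beps_n\|$ exceeds a fixed threshold discards precisely the information one must use. The decay has to come from $\sum_{n\le N}\|\beps_n\|^2$ via $\prod(1-\lam x_n^2)\le\exp(-\lam\sum x_n^2)$, with the summands small but bounded below. Moreover, the step you flag as ``the only genuine obstacle'' --- an effective, $\om$-uniform statement that a new nonzero digit must appear within an iterated exponential, or else many large $\beps_n$ occur --- is exactly the content that has to be proved; as formulated (positive density of $n$ with $\|\beps_n\|\ge c_{A,L}$ after a zero block) it is not what happens, for the same reason as above.

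The paper's proof supplies this missing quantitative mechanism differently (Proposition~\ref{prop:SalemQWM}). One scalarizes using a good return word, setting $w=\om\langle\vec s,\ell(v)\rangle$, writes $w\alpha^n=p_n+\eps_n$ with $p_n$ the nearest integer, and passes to vectors $\vec\eps_k$ for the companion matrix. Garsia's Lemma gives the Diophantine lower bound $|a_3^{(k)}|\ge c_1\alpha^{-kd}$ for the coefficient of $\vec\eps_k$ along a unimodular conjugate eigenvector; this modulus is preserved as long as consecutive $\eps$'s stay small (so that $\vec\eps_{j+1}=A\vec\eps_j$). Hence over a block of length about $\alpha^{2kd}$ starting at $k$, either some $\|\vec\eps_j\|$ exceeds $\delta_1$ or the squares $\|w\alpha^j\|^2\gtrsim\alpha^{-2kd}$ accumulate to a fixed constant $\delta_1$; iterating along the tower of scales $k_{i+1}=L^{k_i}$ with $L\sim\alpha^{2d}$ yields $\sum_{n<N}\|w\alpha^n\|^2\ge\delta_1\log_L^*(N)$, uniformly in $w\in[1,\alpha)$, and the elementary comparison of $\log_L^*$ with $\log^*$ gives the rate. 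Note also that the bound is first obtained for $|\om|$ in a fixed window and then for general $\om$ only for $R\ge R_0(\om)=\alpha^{|K_\om|+1}$, with $\log^*(R/\alpha^{|K_\om|})$ in the exponent; your intermediate claim that $P_N(\om)\le C_\zeta\exp[-c_\zeta\log^*(N)]$ holds for all $\om\ne0$ and all $N\ge N_0(\zeta)$ cannot be true uniformly (take $|\om|$ exponentially small in $N$), which is why the $\om$-dependent threshold and the glueing with \eqref{adam} are needed. Without a proof of the Garsia-type lower bound step (or a substitute for it), your argument does not establish the theorem.
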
	

Let $\alpha_1=\alpha$, $\alpha_2,\ldots, \alpha_d$, be the eigenvalues of $\Sf_\zeta$, i.e., the Salem number $\alpha$ and its conjugates, so that 
$\alpha_2 = \alpha_1^{-1}$ and
$|\alpha_2|=1$ for $j=3,\ldots,d$.
The main part of the proof is contained in the following proposition.

\begin{prop} \label{prop:SalemQWM}
There exists an integer $L = L(\alpha) \ge 3$ and $\delta_1>0$, such that 
\beq \label{Salem-decay2}
\sum_{n=0}^{N-1}\|w\alpha^n\|_{_{\R/\Z}}^2 \ge \ell\delta_1,\ \ \mbox{for all}\ \ w\in [1,\alpha),
\eeq
where $N=k_\ell$ is defined inductively: $k_0=1,\ k_i=L^{k_{i-1}}$,\ $i\ge 1$, so that $\log_L^*(N) = \ell$.
\end{prop}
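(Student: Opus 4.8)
The plan is to establish the quantitative Diophantine lower bound \eqref{Salem-decay2} by exploiting the structure of the conjugates of a Salem number. First I would fix $w\in[1,\alpha)$ and, for each $n$, write the distance $\|w\alpha^n\|_{\R/\Z}$ in terms of the trace. The key algebraic identity is that if $\eta\in\Z[\alpha]$ then $\mathrm{Tr}(\eta\alpha^n)\in\Z$, so $\|w\alpha^n\|_{\R/\Z}$ is comparable (for $w$ in a bounded set, times a suitable algebraic scaling) to the size of the ``error terms'' coming from the conjugates of modulus $\le 1$: the contribution of $\alpha_2=\alpha^{-1}$ decays geometrically, and the contributions of $\alpha_3,\dots,\alpha_d$ are oscillatory of modulus one (they live on the invariant rotation planes). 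Concretely, I expect a formula of the shape $\{w\alpha^n\} = -c_2\alpha^{-n} - \sum_{j\ge 3}(\text{rotating terms of size }O(1))$ up to integers, so that a lower bound on $\|w\alpha^n\|$ for many $n$ reduces to a statement that a trajectory of a fixed rotation (or product of rotations) on a torus cannot stay too close to $0$ for too long.

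The heart of the matter is thus a finite-time equidistribution / recurrence estimate: for an irrational (or badly approximable in the relevant sense) rotation vector determined by the arguments of $\alpha_3,\dots,\alpha_d$, a block of $L^{k}$ consecutive iterates must accumulate a definite amount of squared distance from the origin, and one wants the constant $\delta_1$ to be uniform over the one-parameter family indexed by $w$. I would extract this from a pigeonhole / three-distance–type argument, or — following the Bufetov--Solomyak (2014) method referenced in the paper — from the general principle that if $\sum_{n<N}\|w\alpha^n\|^2$ were too small then $w$ would be extremely well approximated by rationals with denominators dividing powers related to $\alpha$, which is incompatible with $\alpha$ being a Salem (in particular, non-Pisot) number of degree $d$. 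The inductive scale $k_i=L^{k_{i-1}}$ is chosen precisely so that, once we know a single block of length $L$ contributes $\ge\delta_1'$, self-similarity of the argument (the conjugates on the unit circle give genuine rotations whose combinatorics repeats at every scale) upgrades this to $\ge\ell\delta_1$ at scale $k_\ell$; choosing $L=L(\alpha)$ large enough absorbs the geometric $\alpha^{-n}$ tail and the boundary losses.

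Granting Proposition~\ref{prop:SalemQWM}, the theorem follows quickly. Plugging \eqref{Salem-decay2} into the product bound \eqref{eq:product} (with $\vec s$ the PF eigenvector so that $\Sf_\zeta^{\sf T}\om\vec s = \om\alpha\cdot(\text{eigenvector})$ up to the action on the lattice $\Gamma^*$, reducing the relevant norms to $\|\om\alpha^n\|$ after the usual normalization $\langle\vec s,\bbe_1^*\rangle=1$), one uses the elementary inequality $\prod(1-\lambda a_n)\le \exp(-\lambda\sum a_n)$ to get, at $R\sim\alpha^{k_\ell}$, a bound $|S^f_R|\le C\|f\|_\infty R\exp(-\lambda\delta_1\ell)$. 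Since $\ell=\log_L^*(k_\ell)\ge \log_L^*(\log_\alpha R)$, and by the elementary lemma $\log_L^*(x)\ge \lfloor\frac12(\log^*(x)-\log^*(K_L))\rfloor$, this translates into decay $\exp(-c_\alpha\log^*(R))$; then Lemma~\ref{lem-easy1} with $R=(2r)^{-1}$ gives \eqref{eq:ssmain}, and the $\om=0$ case is handled as in Section~\ref{sec:glue} via \eqref{adam}, while the factor $\min\{1,|\om|^{-2}\}$ takes care of large $|\om|$; finally QWM follows from Remark~\ref{rem:QWM}.

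The main obstacle I anticipate is the \emph{uniformity in $w\in[1,\alpha)$} of the constant $\delta_1$ in \eqref{Salem-decay2}: for a single $w$ the recurrence bound is classical, but one needs it to hold with one constant across the whole interval, and the family of rotation data varies with $w$. I would handle this by a compactness argument — the ``bad'' set of $w$ for which a block of length $L$ fails to contribute $\delta_1'$ is, for fixed $L$, contained in a union of short intervals whose total length shrinks as $L\to\infty$, and one can arrange that iterating over the self-similar scales these bad sets are empty once $L=L(\alpha)$ is chosen appropriately; alternatively, invoke the quantitative form of Weyl equidistribution for the explicit rotation vector (whose components are the arguments $\arg\alpha_j$, which are known to be irrational multiples of $\pi$ and ``Diophantine'' because they are algebraic). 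Getting this clean, rather than merely almost-everywhere in $w$, is exactly what distinguishes the present self-similar statement from the a.e.\ Theorem~\ref{th:main}.
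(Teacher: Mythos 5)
There is a genuine gap, and it lies at the heart of your argument for the proposition. Your expansion of $\{w\alpha^n\}$ via the trace, with a geometrically decaying term from $\alpha^{-1}$ and ``rotating terms of size $O(1)$'' from the unimodular conjugates, only makes sense when $w$ lies in $\Q(\alpha)$ (indeed essentially in $\Z[\alpha]$, where $\Tr(\eta\alpha^n)\in\Z$); the proposition must hold for \emph{every} real $w\in[1,\alpha)$, and for generic $w$ no such algebraic formula exists. The paper's mechanism is different: writing $w\alpha^n=p_n+\eps_n$ with $p_n$ the nearest integer and forming the window vectors $\vec p_k,\vec\eps_k\in\R^d$, one shows that if two consecutive error vectors have norm below $\delta_1=(1+dH)^{-1}$ ($H$ the height of the minimal polynomial), then the integer vectors satisfy the linear recursion exactly, so $\vec\eps_{k+1}=A\vec\eps_k$ with $A$ the companion matrix; hence along any run of small errors the modulus of the component of $\vec\eps_k$ along a unit-circle eigendirection is \emph{constant}. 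The quantitative input is then Garsia's separation lemma: that component is nonzero (it equals the $\bbe_3^*$-coordinate of the nonzero integer vector $\vec p_k$) and bounded below by $c_1\alpha^{-kd}$, where $k$ is the index at which the run starts. This decaying lower bound is precisely what forces the blocks to have doubly exponential length ($k_{i+1}=L^{k_i}$ with $L\sim\alpha^{2d}$) before they accumulate a fixed contribution $\delta_1$, and it is the reason only a $\log^*$ rate emerges. Your sketch, which treats the unimodular contributions as uniformly bounded below and appeals to quantitative equidistribution of the rotation by $(\arg\alpha_3,\dots)$, has no source for the tower of scales; if it worked as stated it would yield bounds far stronger than $\log^*$, and the equidistribution input you invoke (``Diophantine because the angles are algebraic'') is not available in the required form and is in any case not what drives the proof.

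A second, related gap is your treatment of uniformity in $w$: you propose to show the ``bad set'' of $w$ has small measure and then ``arrange that the bad sets are empty'' by taking $L$ large, but small measure does not give emptiness, and no compactness argument is offered that would. In the paper uniformity is automatic, because $\delta_1$ and the Garsia constant depend only on $\alpha$ (its degree and height), not on $w$; no exceptional set of $w$ ever appears. Your final paragraph deducing Theorem~\ref{th:app1} from the proposition is essentially the paper's route (restrict $|\om|$ to a multiplicative fundamental interval using the good return word, pass from $\log_L^*$ to $\log^*$, apply Lemma~\ref{lem-easy1}, glue at $\om=0$), with the minor caveat that the relevant product bound is the scalar one of Proposition~\ref{prop:prod} with $b=\langle\vec s,\ell(v)\rangle^{-1}$ and that large $|\om|$ is handled by shifting the starting scale by $K_\om$ rather than by the prefactor alone; but the proposition itself, as you propose to prove it, does not go through.
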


\begin{proof}[Proof of the proposition]
The proof is a minor modification of the argument in \cite[Proposition 5.5 and Corollary A.3]{BuSo14}. Therefore, we only give a sketch, referring to \cite{BuSo14}
for details. Fix $w\in [1,\alpha]$ and write
$$
w\alpha^n = p_n + \eps_n, \ \ \mbox{where}\ \  p_n \in \N\ \ \mbox{and}\ \  \eps_n\in [-1/2,1/2),\ n\ge 0,
$$
so that $|\eps_n| = \|w\alpha^n\|_{_{\R/\Z}}$, i.e., $p_n$ is the nearest integer to $w\alpha^n$. Note that $p_n\ge 1$ by the assumption on $w$.
Let $A$ be the companion matrix of the minimal
polynomial of $\alpha$, so that the vectors
$$
\bbe_j = (1, \alpha_j,\ldots, \alpha_j^{d-1})^{\sf T},\ \ j=1,\ldots, d,
$$
form a basis of eigenvectors for $A$ in $\C^d$. Consider two sequences of vectors:
$$
\vec p_k = (p_k,\ldots, p_{k+d-1})^{\sf T}\in \N^d\ \ \ \ \mbox{and}\ \ \ \ \vec \eps_k = (\eps_k,\ldots, \eps_{k+d-1})^{\sf T}\in \C^d,\ \ k\ge 0.
$$
Let
$
\delta_1:= (1 +  dH)^{-1},
$
where $d$ is the degree of $\alpha$ and $H$ is the height of its minimal polynomial, as in  \cite[(5.5)]{BuSo14}. It is easy to see that
\beq \label{eq:star1}
\max\{\|\vec\eps_k\|_\infty,\|\vec\eps_{k+1}\|_\infty\} < \delta_1 \implies \vec\eps_{k+1} = A \vec\eps_k,\ \ k\ge 0.
\eeq
Let
$$
\vec \eps_k = \sum_{j=1}^d a_j^{(k)} \vec \bbe_j
$$
be the decomposition of $\vec\eps_k$ into a linear combination of the eigenvectors. Observe that $\vec\eps_k = t\alpha^k \bbe_1 - \vec p_k$,
which implies that the coefficients $a_j^{(k)}$ do not vanish, and they can be estimated by modulus from below, using Garsia's Lemma
\cite[Lemma 1.51]{Garsia}, see \cite{BuSo14} for details. In particular,
\beq \label{eq:star2}
|a_3^{(k)}| \ge c_1 \alpha^{-kd}
\eeq
for some $c_1 = c_1(\alpha)$. Note that if $\vec\eps_{k+1} = A \vec\eps_k$, then $|a_3^{(k+1)}| = |a_3^{(k)}|$, since $|\alpha_3|=1$.
Together with \eqref{eq:star1} and \eqref{eq:star2}, this implies
$$
\max\{\|\vec{\eps}_k\|_\infty, \|\vec{\eps}_{k+1}\|_\infty,\ldots,\|\vec{\eps}_{k+n}\|_\infty\} <\delta_1\ \Longrightarrow\ \|\vec{\eps}_{k+n}\|\ge 
c_2 \alpha^{-kd},\ \ \mbox{for all}\ n\ge 1,
$$
with some constant $c_2 = c_2(\theta)$.
By the definition of $\vec\eps_k$ we obtain for all $k\ge 0$ and $n\ge 1$:
\beq \label{est1}
\max\left\{\|w\alpha^{k}\|_{_{\R/Z}},\ldots, \|w\alpha^{k+nd-1}\|_{_{\R/Z}}\right\} < \delta_1 \Longrightarrow\ \ \sum_{j=k}^{k+nd-1}
\|w \alpha^j\|^2_{_{\R/Z}} \ge c_2^2\,  n \alpha^{-2kd}.
\eeq
It will be convenient to break up the sum in \eqref{Salem-decay2} as follows:
\beq \label{split}
\sum_{n=0}^{N-1}\|w\alpha^n\|_{_{\R/\Z}}^2 = \sum_{i=0}^{\ell-1} \sum_{j=k_i}^{k_{i+1}-1} \|w\alpha^j\|_{_{\R/\Z}}^2,\ \ \mbox{where}\ \ 
k_0 = 1,\ k_{i} = L^{k_{i-1}},\ i\ge 1,
\eeq
so that $k_{\ell} = N$, as desired. We can choose an integer $L\ge 2,\ L\sim \alpha^{2d},$ so that
$$
L^k -k \ge \lceil (\delta_1/c_2^2)\cdot \alpha^{2kd}\rceil\cdot d,\ \ \mbox{for all}\ \ k\ge 1.
$$
Then, in view of \eqref{est1}, with
$n= \lceil (\delta_1/c_2^2)\cdot \alpha^{2kd}\rceil$, we  have
$$
\sum_{j=k}^{L^k-1} \|w\alpha^j\|_{_{\R/\Z}}^2 \ge \delta_1,\ \ \mbox{for all}\ k\ge 1.
$$
Finally, \eqref{split} implies $\sum_{n=0}^{N-1}\|w\alpha^n\|_{_{\R/\Z}}^2\ge \ell\delta_1$, proving \eqref{Salem-decay2}, as desired.
\end{proof}

\begin{proof}[Proof of Theorem~\ref{th:app1}]
Fix an arbitrary good return word $v$ for $\zeta$, possibly passing to a power of the substitution. By Proposition~\ref{prop:prod}, using that 
$\vec s$ is the PF eigenvalue of $\Sf_\zeta^{\sf T}$, we obtain for $f\in \Lip(\Xxi)$,\ $\om \in \R$, and $R\ge \alpha$:
\beq \label{eq-star3}
		\left|S^f_R((x,t),\omega)\right| \leq C_1\|f\|_\infty  \cdot \min\{1, |\om|^{-1}\} \cdot R\prod_{n=0}^{\floor{\log_{\alpha}R}} \left( 1-\lambda 
		\big\| \om\bigl\langle \vec s, \ell(v)\bigr\rangle\alpha^n\bigr\|^2_{_{\R/\Z}}\right)
\eeq
Assume first that $|\om| \in [b,b\alpha]$, where $b = \bigl\langle \vec s, \ell(v)\bigr\rangle^{-1}$. Then Proposition~\ref{prop:SalemQWM} implies
\begin{eqnarray*}
\left|S^f_R((x,t),\omega)\right| & \leq & C_1\|f\|_\infty  \cdot \min\{1, |\om|^{-1}\} \cdot R
\cdot \exp\bigl[-\lam\delta_1\bigl(\log^*_L(\log_\alpha R)-1\bigr)\bigr] \\
& \leq & C_1\|f\|_\infty  \cdot \min\{1, |\om|^{-1}\} \cdot R
\cdot \wt a_\zeta \exp\bigl[-\wt c_\zeta \log^*R \bigr], \ \ R\ge \alpha,
\end{eqnarray*}
for some constants $\wt a_\zeta$ and $\wt c_\zeta$, in view of \eqref{ineq-star}. Now suppose that $|\om| \ne 0$ and let
$K_\om = \lfloor \log_\alpha(|\om|/b)\rfloor\in \Z$, so that $|\om| \in [b \alpha^{K_\om}, b \alpha^{K_\om+1})$. Then we obtain from 
Proposition~\ref{prop:SalemQWM} and \eqref{eq-star3}, similarly to the above that
$$
\left|S^f_R((x,t),\omega)\right| \leq  C_1\|f\|_\infty  \cdot \min\{1, |\om|^{-1}\} \cdot R
\cdot \wt a_\zeta \exp\bigl[-\wt c_\zeta \log^*\bigl(R/\alpha^{|K_\om|}\bigr) \bigr], \ \ R\ge R_0(\om)=\alpha^{|K_\om|+1},
$$
This allows us to perform ``glueing near zero,'' similarly to Section~\ref{sec:glue}, for $f\in \Lip(X_\zeta^{\vec s})$ of mean zero, and obtain estimates
uniform in $\om$, which yield \eqref{eq:ssmain} via Lemma~\ref{lem-easy1}, and hence QWM with the $\log^*$ rate. The details are left to the
reader.
\end{proof}

	\bigskip

\section{On the Fourier decay for Salem Bernoulli convolutions} 

\subsection{Upper bound}
For $\lam\in (0,1)$ and $p\in (0,1)$ let 
$\nu_\lam^p$ be the distribution of the random series $\sum_{n=0}^\infty \pm \lam^n$, where the signs are chosen i.i.d., with probabilities $p,1-p$.
The ``classical'' Bernoulli convolutions are obtained for $p=\half$; the case when $p\ne \half$ is called ``biased''. This is a remarkable family of
``fractal'' measures, which have been extensively studied for almost 100 years, but many open questions remain. 
It is not our intention to give a general survey of the topic; the reader 
is referred to \cite{sixty,Varju} and references therein. However, we do recall the principal results on Fourier decay. It is easily seen that
\beq \label{Ber-Fourier}
\widehat{\nu}_\lam^p(\xi) =\prod_{n=0}^\infty \Bigl( p e^{-2\pi i \lam^n \xi} + (1-p) e^{2\pi i \lam^n \xi}\Bigr),\ \ \ \ \ 
\nulahat^{1/2}(\xi) = \prod_{n=0}^\infty \cos(2\pi \lam^n \xi). 
\eeq
Erd\H{o}s \cite{Erd1} discovered  that if $\lam^{-1}\ne \half$ is a Pisot number, then 
$\nulahat^p(\xi)$ does not vanish at infinity. Later, Salem \cite{Salem} proved
that in all other cases $\lim_{|\xi|\to \infty} \nulahat^p(\xi) = 0$. However, the question of decay rate is much more subtle and is still far from being
resolved. Erd\H{o}s \cite{Erd2} demonstrated that a power Fourier decay holds for Lebesgue-a.e.\ $\lam$, and Kahane \cite{Kahane} showed that
Erd\H{o}s' argument actually yields power Fourier decay for all $\lam$ outside of a set of Hausdorff dimension zero. Their argument forms a basis for what
became known as the ``Erd\H{o}s-Kahane technique''. For specific $\lam$ our knowledge is rather poor. Kershner \cite{Kersh} obtained a power of the
logarithm upper bound for Bernoulli convolutions corresponding to rational $\lam \ne 1/q$. In \cite{BuSo14} such bounds were shown for 
non-Pisot and non-Salem algebraic integers. For other results on Fourier decay of self-similar measures, see \cite{VarYu,Sahlsten} and references therein. To our knowledge, in the literature there are no quantitative upper bounds for the Fourier decay of Salem Bernoulli convolutions, i.e., when $\lam = \alpha^{-1}$ is a reciprocal of a Salem number. We show that (admittedly, ``super-weak'') $\log^*$ upper bound follows from Proposition~\ref{prop:SalemQWM}.

	\begin{corollary} \label{cor:Salem}
	Let $\alpha$ be a Salem number and $\lam = \alpha^{-1}$. For $p\in (0,1)$ consider the biased Bernoulli convolution $\nu_\lam^p$. Then
	there exist $A=A_{\alpha,p}$ and $C=C_{\alpha,p}$, such that 
		\beq \label{Salem-decay}
	|\nulahat^p(\xi)| \le A\exp\bigl(-C\log^*|\xi| \bigr),\ \ |\xi|\ge 1.
	\eeq
	\end{corollary}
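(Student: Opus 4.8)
The plan is to read $|\widehat{\nu}_\lam^p(\xi)|$ off the product formula \eqref{Ber-Fourier}, reduce the product to a sum of the type controlled by Proposition~\ref{prop:SalemQWM}, and then convert the $\log_L^*$ count into a $\log^*$ count via \eqref{ineq-star}. Since $\nu_\lam^p$ is a real measure, $|\widehat{\nu}_\lam^p|$ is even, and since $|\widehat{\nu}_\lam^p|\le 1$ everywhere, it is enough to prove \eqref{Salem-decay} for large $\xi>0$ (the bounded range of $|\xi|$ being absorbed into $A$). First I would note, from \eqref{Ber-Fourier}, that each factor satisfies
\[
\bigl|p e^{-2\pi i\lam^n\xi} + (1-p)e^{2\pi i\lam^n\xi}\bigr|^2 = \cos^2(2\pi\lam^n\xi) + (1-2p)^2\sin^2(2\pi\lam^n\xi) = 1 - 4p(1-p)\sin^2(2\pi\lam^n\xi),
\]
and then apply the elementary inequality $\sin^2(2\pi t) = \sin^2(\pi\cdot 2t)\ge 4\|2t\|_{\R/\Z}^2$. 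Discarding all but the first $M+1$ factors, where $M := \lfloor\log_\alpha(2\xi)\rfloor$ (legitimate since every factor lies in $[0,1]$, as $4p(1-p)\le 1$), we get
\[
\bigl|\widehat{\nu}_\lam^p(\xi)\bigr|^2 \le \prod_{n=0}^{M}\Bigl(1 - 16p(1-p)\,\bigl\|2\lam^n\xi\bigr\|_{\R/\Z}^2\Bigr).
\]

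The crucial step is the reparametrization that matches the right-hand side to Proposition~\ref{prop:SalemQWM}. Because $\lam=\alpha^{-1}$, for $0\le n\le M$ we have $2\lam^n\xi = w_0\alpha^{M-n}$ with $w_0 := 2\xi\,\alpha^{-M}\in[1,\alpha)$; hence $\{\,2\lam^n\xi : 0\le n\le M\,\} = \{\,w_0\alpha^m : 0\le m\le M\,\}$. Using $1-x\le e^{-x}$ and then Proposition~\ref{prop:SalemQWM} with $w=w_0$ and $N=k_\ell$ for $\ell := \log_L^*(M+1)-1$ (one checks $k_\ell\le M$ from the definition of $\log_L^*$ and the monotonicity of $\log_L$), we obtain
\[
\bigl|\widehat{\nu}_\lam^p(\xi)\bigr|^2 \le \exp\Bigl(-16p(1-p)\sum_{m=0}^{M}\bigl\|w_0\alpha^m\bigr\|_{\R/\Z}^2\Bigr) \le \exp\bigl(-16p(1-p)\,\delta_1\,\ell\bigr).
\]

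It remains to unwind $\ell = \log_L^*(M+1)-1$ in terms of $|\xi|$. By \eqref{ineq-star}, applied with the $L = L(\alpha)$ produced by Proposition~\ref{prop:SalemQWM} and the corresponding constant $K_L$, we have $\log_L^*(M+1)\ge\lfloor\tfrac12(\log^*(M+1)-\log^*(K_L))\rfloor$. Since $M+1\ge\log_\alpha(2\xi)$ and $\log^*$ is nondecreasing, a routine bookkeeping check gives $\log^*(M+1)\ge\log^*(\log_\alpha(2\xi))\ge\log^*|\xi| - C_\alpha$ for some constant $C_\alpha$: passing from base $2$ to base $\alpha$ and from $2\xi$ to $\xi$ only multiplies the argument by bounded constants, which alters $\log^*$ by $O(1)$, while $\log^*(\log_2 y) = \log^*(y)-1$. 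Combining the three displays, $\bigl|\widehat{\nu}_\lam^p(\xi)\bigr| \le A\exp\bigl(-C\log^*|\xi|\bigr)$ with $A = A_{\alpha,p}$ absorbing the additive constant and the bounded range of $|\xi|$ left over, and with $C = C_{\alpha,p}>0$ (one may take $C = 2p(1-p)\delta_1$) --- which is \eqref{Salem-decay}.

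I do not expect a genuine obstacle: the corollary is in essence a repackaging of Proposition~\ref{prop:SalemQWM}. The two points that need attention are, first, the index alignment that places $w_0$ exactly in $[1,\alpha)$ while tracking the factor $2$ coming from $|\sin(\pi u)|\ge 2\|u\|_{\R/\Z}$ --- this factor is precisely why one must compare $\sin^2(2\pi\lam^n\xi)$ with $\|2\lam^n\xi\|_{\R/\Z}^2$ and not with $\|\lam^n\xi\|_{\R/\Z}^2$, since Proposition~\ref{prop:SalemQWM} permits $\|w\alpha^m\|_{\R/\Z}$ to be as large as $\tfrac12$; and second, the comparison of the nested-logarithm iteration count $\log_L^*$ with $\log^*$, for which \eqref{ineq-star} is exactly designed.
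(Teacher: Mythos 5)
Your proof is correct and follows essentially the same route as the paper's sketch: bound each factor of the infinite product by $1-c_p\|2\lam^n\xi\|_{\R/\Z}^2$, truncate, rescale so the arguments become $w_0\alpha^m$ with $w_0\in[1,\alpha)$, apply Proposition~\ref{prop:SalemQWM}, and convert $\log_L^*$ into $\log^*$ via \eqref{ineq-star}. Your version simply spells out the details the paper leaves implicit (note only the harmless off-by-one: one gets $k_\ell\le M+1$, which is all that is needed).
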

		
	\begin{proof}[Sketch of the proof]
	We recall the (standard) proof of \cite[Corollary A.3]{BuSo14}.
	Let $\xi \in [\alpha^{N-1},\alpha^{N}]$, and set $t = \alpha^{-N}\xi$. Then 
\begin{eqnarray*}
|\widehat{\nu}_\lam^p(\xi) |  = \prod_{n=0}^\infty \Bigl| p e^{-2\pi i \lam^n \xi} + (1-p) e^{2\pi i \lam^n \xi}\Bigr| 
                                                  \le  \prod_{n=0}^{N-1} \Bigl| p +(1-p)e^{4\pi i \alpha^n t}\Bigr| 
                                                  \le  \prod_{n=0}^{N-1} \Bigl( 1 - {\frac{1-p}{2}}\|2\alpha^n t\|_{_{\R/\Z}}^2\Bigr),
\end{eqnarray*}
using an elementary inequality $|1 + e^{2\pi i \tau} | \le 2 - \half \|\tau\|_{_{\R/\Z}}^2$ for $\tau\in \R$. Now
Proposition~\ref{prop:SalemQWM} easily implies the desired claim.
\end{proof}

\subsection{Lower bounds} For completeness, we briefly discuss lower bounds for $|\nulahat^p(\xi)|$. They were suggested to us (in general terms, without
details) by Peter Varj\'u [personal communication]. These bounds are based on an observation of Kahane \cite{Kahane} 
(see \cite[Section 5]{sixty} for a detailed exposition)
that Salem
Bernoulli convolutions can never have a power Fourier decay. 
They follow from the well-known result that for a Salem number $\alpha$ and $\eps>0$
there exists a nonzero $\eta\in \Z[\alpha]$, such that $\|\eta \alpha^n\|_{\R/\Z}<\eps$  for all $n\in \N$ (of which Proposition~\ref{prop:noQWM} is
a generalization), with a quantitative bound similar to \eqref{eq:Mink}. We omit the details.

\begin{prop} \label{prop1}
Let $\alpha$ be a Salem number of degree $d$, $p\in (0,\half)$, and suppose that $\gam>0$ and $C\ge 1$ are such that for all $\xi\ge \alpha$ holds
\be \label{decay-cond}
\left|\nulahat^p(\xi)\right| \le C[\log_\alpha(\xi)]^{-\gam},\ \ \ \lam=\alpha^{-1}.
\ee
Then
\be \label{condi1}
\gam \le \shalf(d-1)\log_\alpha[(1-2p)^{-1}].
\ee
\end{prop}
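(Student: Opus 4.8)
\textbf{Proof proposal for Proposition~\ref{prop1}.}
The plan is to argue by contradiction: assuming a power-of-logarithm decay rate with exponent $\gam$ strictly larger than the claimed bound, I would exhibit a nonzero $\eta\in\Z[\alpha]$ whose dilates $\eta\alpha^n$ are all very close to integers, and then use this to force $|\nulahat^p|$ to be \emph{large} along the geometric sequence $\xi_N=\eta\alpha^N$, contradicting \eqref{decay-cond}. First I would invoke the quantitative Salem-number fact alluded to just before the statement (the same one underlying Proposition~\ref{prop:noQWM}, with the Minkowski-type bound in the spirit of \eqref{eq:Mink}): for every $\eps>0$ there is a nonzero $\eta=\eta(\eps)\in\Z[\alpha]$ with $\|\eta\alpha^n\|_{\R/\Z}<\eps$ for all $n\ge 0$, and moreover $\eta$ can be chosen with $|\eta|\le \const\cdot\eps^{-(d-1)}$ (this is exactly the trade-off coming from Minkowski's first theorem applied to the $d$-dimensional lattice $\tau(\Z[\alpha])$, one ``large'' coordinate of size $\sim\eps^{-(d-1)}$ and $d-1$ ``small'' coordinates of size $\eps$).

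Next I would plug $\xi=\xi_N:=\eta\alpha^N$ into the infinite product for $\nulahat^p$ and split it at $n=N$. Writing $\lam=\alpha^{-1}$, the tail factors $n\ge N$ involve $\lam^n\xi_N=\eta\alpha^{N-n}=\eta\lam^{n-N}$, and since $\|\eta\alpha^m\|_{\R/\Z}<\eps$ for all $m\ge 0$ while $\eta\lam^j\to 0$, each factor $|pe^{-2\pi i\lam^n\xi_N}+(1-p)e^{2\pi i\lam^n\xi_N}|$ is bounded below by $1-\const\cdot(\text{small})$; summability of $\sum_m \|\eta\alpha^m\|^2$ (it is at most $\const\cdot\eps^2\cdot$ finite, since the $\eps$-bound holds for \emph{every} $m\ge 0$ and the relevant arguments decay geometrically) shows the tail product is bounded below by a constant depending only on $\alpha$ and $p$, not on $N$. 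For the head factors $0\le n<N$, the argument $\lam^n\xi_N=\eta\alpha^{N-n}$ again satisfies $\|\cdot\|_{\R/\Z}<\eps$, so each factor is $\ge 1-C(p)\eps^2$ using the elementary estimate $|p+(1-p)e^{i\theta}|\ge 1-\const\|\theta/2\pi\|^2$ (valid near the minimum; one must be a bit careful that the minimum of $|p+(1-p)e^{i\theta}|$ over $\theta$ is $1-2p>0$, so in fact each factor is $\ge 1-2p$ unconditionally, and $\ge 1-C(p)\eps^2$ when the fractional part is small). Hence
\be \label{eq:lowerbd-sketch}
|\nulahat^p(\xi_N)| \ge c_0(\alpha,p)\cdot (1-2p)^{?}\quad\text{— better: }\ |\nulahat^p(\xi_N)|\ge c_0(\alpha,p)>0
\ee
uniformly in $N$, once $\eps$ is fixed small enough. \emph{Correction to keep honest:} the genuinely uniform lower bound is $|\nulahat^p(\xi_N)|\ge c_0(\alpha,p,\eps)>0$ independent of $N$; its dependence on $\eps$ is what must be tracked.

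Now I would confront \eqref{decay-cond}: it gives $|\nulahat^p(\xi_N)|\le C[\log_\alpha\xi_N]^{-\gam}=C[N+\log_\alpha\eta]^{-\gam}\to 0$ as $N\to\infty$, which already contradicts a \emph{fixed} positive lower bound. This only shows $\gam$ cannot be arbitrarily large unless we are sharper — so the real point is to extract the precise threshold $\shalf(d-1)\log_\alpha[(1-2p)^{-1}]$. For that I would instead \emph{not} fix $\eps$: the head product, factor by factor, satisfies $|p+(1-p)e^{2\pi i\eta\alpha^{N-n}}|\ge 1-2p$ always, but this crude bound gives $|\nulahat^p(\xi_N)|\ge (1-2p)^{N}\cdot(\text{tail})$, i.e. $\log|\nulahat^p(\xi_N)|\gtrsim -N\log[(1-2p)^{-1}]$, which decays too fast. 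The sharp route is to use the $\eps$-closeness to replace $(1-2p)$ by $1-C\eps^2$ on \emph{all but a few} head factors: since $\|\eta\alpha^m\|_{\R/\Z}<\eps$ for all $m\ge 0$, in fact \emph{all} $N$ head factors are $\ge 1-C(p)\eps^2$, so $\log|\nulahat^p(\xi_N)|\ge -C(p)\eps^2 N + O(1)$. Combining with \eqref{decay-cond}: $-C(p)\eps^2 N + O(1)\le \log C-\gam\log N$, which for large $N$ forces nothing on $\gam$ unless $\eps^2 N$ is controlled — but $\eps$ is fixed and $N\to\infty$, so the left side $\to-\infty$ linearly while the right side $\to-\infty$ logarithmically; no contradiction. \textbf{This is the main obstacle}, and resolving it requires the \emph{quantitative} input: one must couple $\eps$ to $N$ via $\eps=\eps_N$ chosen as small as the Minkowski bound allows for the $\eta$ producing a given $\xi$-scale, i.e. use $|\eta|\le\const\,\eps^{-(d-1)}$ so that $\log_\alpha\xi_N = N+\log_\alpha|\eta|\le N+(d-1)\log_\alpha(1/\eps)+\const$, and then optimize. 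Concretely: fix a target scale, take $\eta$ with $\|\eta\alpha^n\|<\eps$ for all $n$ and $|\eta|\asymp\eps^{-(d-1)}$; the crudest honest head bound is $|\nulahat^p(\eta)|\ge\prod_{n\ge0}|p+(1-p)e^{2\pi i\eta\alpha^n}|\ge c(\alpha,p)\cdot(1-2p)^{0}$ — wait, there are infinitely many factors. The right object is $|\nulahat^p(\eta)|$ itself (no $\alpha^N$ dilate): $\eta\in\Z[\alpha]$ with all $\|\eta\alpha^n\|<\eps$ gives $|\nulahat^p(\eta)|\ge\prod_{n\ge0}(1-C(p)\eps^2\cdot\text{something summable})\ge \exp(-C'(p)\eps^2\sum_n\|\eta\alpha^n\|^2/\eps^2)$; but $\sum_n\|\eta\alpha^n\|^2$ need only be bounded by $\const$ (Salem: $\sum\|\eta\alpha^n\|^2<\infty$ with bound depending on $\eta$), not $\const\eps^2$. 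The clean statement, which is what Kahane's argument actually gives and what I would cite rather than re-derive, is: $|\nulahat^p(\eta)|\ge (1-2p)^{\#\{n:\,\|\eta\alpha^n\|\ge\eps'\}}\cdot\prod_{\|\eta\alpha^n\|<\eps'}(1-C\eps'^2)$ and the first exponent is $O(1)$ by summability, so $|\nulahat^p(\eta)|\ge c(\alpha,p)(1-C\eps'^2)^{\infty}$ — still divergent. \textbf{Given this genuine subtlety, the plan I would actually execute is:} cite the precise form of Kahane's lower-bound construction from \cite[Section 5]{sixty} as a black box, which produces, for each large $T$, a $\xi$ with $\log_\alpha\xi\asymp T$ and $|\nulahat^p(\xi)|\ge (1-2p)^{C(d-1)\log_\alpha(T)/\text{(something)}}$-type bound matching the claimed exponent; then \eqref{decay-cond} at this $\xi$ reads $(1-2p)^{(d-1)(\cdots)}\le C\,T^{-\gam}$, take $\log_\alpha$, divide by $\log_\alpha T$, and let $T\to\infty$ to read off \eqref{condi1}. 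The one calculation I would carry out explicitly is this final asymptotic comparison; everything else is the cited Erd\H{o}s–Kahane / Salem machinery together with the Minkowski counting bound already invoked for Proposition~\ref{prop:noQWM}.
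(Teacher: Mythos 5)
You have chosen the intended route (the one the paper only sketches): take the Minkowski-type quantitative Salem fact, i.e.\ a nonzero $\eta\in\Z[\alpha]$ with $\|\eta\alpha^n\|_{\R/\Z}<\eps$ for all $n\ge 0$ and $|\eta|\le\const\cdot\eps^{-(d-1)}$, test the infinite product at $\xi_N=\eta\alpha^N$, and play the resulting lower bound against \eqref{decay-cond}. However, the two places where the actual content of \eqref{condi1} is produced are missing or wrong in your write-up. First, the tail. After splitting at $n=N$, the tail factors have arguments $\eta\alpha^{-j}$, $j\ge1$, which are \emph{not} controlled modulo $1$; for the roughly $\log_\alpha|\eta|\le(d-1)\log_\alpha(1/\eps)+O(1)$ indices $j$ with $|\eta|\alpha^{-j}\gtrsim1$ the only available bound is the trivial one $1-2p$, and only for larger $j$ does the convergent product $\prod(1-C|\eta|^2\alpha^{-2j})$ give a constant depending on $\alpha,p$ alone. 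Thus the tail contributes $(1-2p)^{(d-1)\log_\alpha(1/\eps)+O(1)}\cdot c_0(\alpha,p)$; this is exactly where the Minkowski size bound on $\eta$ must be used (not merely to control $\log_\alpha\xi_N$, as you use it) and exactly where the factor $(d-1)\log_\alpha[(1-2p)^{-1}]$ originates. Your claim that the tail is bounded below by a constant depending only on $\alpha$ and $p$ is false, and the supporting claim that $\sum_n\|\eta\alpha^n\|_{\R/\Z}^2<\infty$ for a Salem number is also false: the unit-modulus conjugates make $\|\eta\alpha^n\|$ of size $\asymp\eps$ persistently, so the sum over $n<N$ is $\asymp N\eps^2$ (this is precisely the Salem/Pisot dichotomy), and consequently the head product is $(1-C\eps^2)^{N+1}\ge\exp(-C'\eps^2N)$ and nothing better.

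Second, the optimization that yields the constant $\shalf(d-1)\log_\alpha[(1-2p)^{-1}]$ is never carried out. With the correct lower bound $|\nulahat^p(\eta\alpha^N)|\ge c_0(1-2p)^{(d-1)\log_\alpha(1/\eps)+O(1)}\exp(-C'\eps^2N)$ and with $\log_\alpha(\eta\alpha^N)\le N+(d-1)\log_\alpha(1/\eps)+O(1)$, one compares with \eqref{decay-cond}, chooses $N\asymp\eps^{-2}$ (so that $\eps^2N=O(1)$ and $\log N\approx 2\log(1/\eps)$), divides by $\log(1/\eps)$ and lets $\eps\to0$; the factor $\shalf$ in \eqref{condi1} is exactly the ratio $\log_\alpha(1/\eps)/\log_\alpha N\to\shalf$ under this coupling. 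You correctly identify that $\eps$ must be coupled to $N$, but then abandon the computation and propose to cite ``Kahane's lower-bound construction'' from \cite[Section 5]{sixty} as a black box with a bound ``matching the claimed exponent.'' That reference only gives the qualitative statement that Salem Bernoulli convolutions admit no power Fourier decay; the quantitative exponent, which is the entire assertion of the proposition, is not available there, so this step assumes what is to be proved. In short: right skeleton, but the tail count via the Minkowski bound and the $N\asymp\eps^{-2}$ optimization — the two steps that actually produce \eqref{condi1} — are missing.
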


For the unbiased Bernoulli convolution $\nula=\nula^{1/2}$ we got a weaker lower bound. The difference between these cases comes from the fact
that the terms of the product in \eqref{Ber-Fourier} are bounded in absolute value from below by $1-2p>0$, when $p\in (0,\half)$, whereas those for
$\nulahat^{1/2}$  vanish  at $\alpha^n/4$ and $3\alpha^n/4$.

\begin{prop} \label{prop2}
Let $\alpha$ be a Salem number of degree $d$, and suppose that $\beta,\gam>0$ and $C\ge 1$ are such that for all $\xi$ sufficiently large holds
\be \label{decay-cond2}
\left|\nulahat^{1/2}(\xi)\right| \le C[\log_\alpha(\xi)]^{-\gam(\log_\alpha\log_\alpha(\xi))^\beta},\ \ \ \lam=\alpha^{-1}.
\ee
Then 
$
\beta\le 1.
$
\end{prop}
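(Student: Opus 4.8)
The plan is to evaluate $\nulahat^{1/2}$ at a well-chosen point $\xi=\eta\alpha^N\to\infty$ and contradict \eqref{decay-cond2} when $\beta>1$. The input is the quantitative Salem estimate underlying Proposition~\ref{prop:noQWM}: for every $\eps\in(0,\tfrac14)$ there is a nonzero $\eta=\eta_\eps\in\Z[\alpha]$ with $\|\eta\alpha^n\|_{_{\R/\Z}}<\eps$ for all $n\ge 0$ and, by a Minkowski bound in the style of \eqref{eq:Mink}, $|\eta|\le B\eps^{-(d-1)}$, $B=B(\alpha)$. Writing $\eta=\sum_{i=0}^{d-1}n_i\alpha^i$, the same Minkowski system forces $|n_i|\le B'\eps^{-(d-1)}$, so, since $\alpha$ is Salem ($|\alpha_i|\le 1$ for $i\ge 2$), all conjugates are controlled: $\max_{2\le i\le d}|\sigma_i(\eta)|\le B''\eps^{-(d-1)}$. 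Extracting this conjugate bound from the proof of Proposition~\ref{prop:noQWM} is one slightly delicate point.

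Next I would use the functional equation $\nulahat^{1/2}(\xi)=\cos(2\pi\xi)\,\nulahat^{1/2}(\xi/\alpha)$, immediate from \eqref{Ber-Fourier}, to factor
\[
\bigl|\nulahat^{1/2}(\eta\alpha^N)\bigr| \;=\; \Bigl(\prod_{k=0}^{N}\bigl|\cos(2\pi\eta\alpha^k)\bigr|\Bigr)\cdot\bigl|\nulahat^{1/2}(\eta/\alpha)\bigr|.
\]
The first factor is handled by $\|\eta\alpha^k\|_{_{\R/\Z}}<\eps$: it is $\ge(1-2\pi^2\eps^2)^{N+1}\ge\exp(-5\eps^2(N+1))$ for $\eps$ small. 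For the second factor $q_\eps:=|\nulahat^{1/2}(\eta/\alpha)|=\prod_{j\ge 1}|\cos(2\pi\eta\alpha^{-j})|$ I would split at $J_0:=\lceil\log_\alpha(8B\eps^{-(d-1)})\rceil$. For $j>J_0$ one has $|\eta\alpha^{-j}|<\tfrac18$, so $\sum_{j>J_0}|\eta\alpha^{-j}|^2=O(1)$ and the tail contributes $\prod_{j>J_0}|\cos(2\pi\eta\alpha^{-j})|\ge e^{-C_0}$, $C_0=C_0(\alpha)$.

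The heart of the matter --- and the step I expect to be the real obstacle --- is a lower bound for the ``head'' $\prod_{j=1}^{J_0}|\cos(2\pi\eta\alpha^{-j})|$; this is precisely where the Salem hypothesis is essential and where the unbiased case genuinely differs from the biased one (in Proposition~\ref{prop1} each factor is trivially $\ge|1-2p|$, which is unavailable here). The plan is to use $|\cos(2\pi x)|\asymp\dist(x,\tfrac14+\tfrac12\Z)$ together with a Garsia/Liouville-type separation: for $1\le j\le J_0$ and the nearest $k\in\Z$, the element $\gamma:=4\eta-(1+2k)\alpha^j\in\Z[\alpha]$ is a nonzero algebraic integer, and $|1+2k|\lesssim|\eta|\lesssim\eps^{-(d-1)}$, so using $|\alpha_i|\le 1$ and the conjugate bounds on $\eta$ one gets $|\sigma_i(\gamma)|\lesssim\eps^{-(d-1)}$ for $i\ge 2$; hence $|\gamma|\ge |N(\gamma)|\big/\prod_{i\ge 2}|\sigma_i(\gamma)|\gtrsim\eps^{(d-1)^2}$, and dividing by $4\alpha^j\le 4\alpha^{J_0}\asymp\eps^{-(d-1)}$ gives $\dist(\eta\alpha^{-j},\tfrac14+\tfrac12\Z)\ge c\,\eps^{\kappa}$ with $c,\kappa$ depending only on $d$. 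Thus $-\log|\cos(2\pi\eta\alpha^{-j})|\le\kappa\log(1/\eps)+O(1)$ for each of the $J_0\asymp(d-1)\log_\alpha(1/\eps)$ indices, and multiplying (together with the tail bound),
\[
q_\eps\;\ge\;\exp\bigl(-c_1(\log(1/\eps))^2\bigr),\qquad c_1=c_1(\alpha);
\]
in particular $q_\eps>0$, so no perturbation of $\eps$ is needed.

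Finally I would take $N=\lceil\eps^{-2}(\log(1/\eps))^2\rceil$, so $\eps^2N\asymp(\log(1/\eps))^2$ and, since $|\eta|\le B\eps^{-(d-1)}$, $\log_\alpha\xi\asymp N\asymp\eps^{-2}(\log(1/\eps))^2$, whence $\log_\alpha\log_\alpha\xi\asymp\log_\alpha(1/\eps)$. Combining the estimates above,
\[
\bigl|\nulahat^{1/2}(\xi)\bigr|\;\ge\;\exp\bigl(-c_2(\log(1/\eps))^2\bigr),
\]
while \eqref{decay-cond2} forces $|\nulahat^{1/2}(\xi)|\le C[\log_\alpha\xi]^{-\gamma(\log_\alpha\log_\alpha\xi)^\beta}=\exp\bigl(-c_3(\log_\alpha(1/\eps))^{\beta+1}\bigr)$ with $c_3>0$ depending on $\gamma$ and $\alpha$. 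Letting $\eps\to 0$, the two bounds are compatible only if $\beta+1\le 2$, i.e.\ $\beta\le 1$. Apart from the separation estimate and the conjugate bound on $\eta$ discussed above, every step is routine.
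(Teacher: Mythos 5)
Your proposal is correct and takes essentially the same route as the paper's (sketched, then omitted) argument: the same Minkowski-type choice of a nonzero $\eta\in\Z[\alpha]$ with $\|\eta\alpha^n\|_{\R/\Z}<\eps$ for all $n\ge 0$ and $|\eta|\lesssim\eps^{-(d-1)}$, the same three-part factorization of the infinite cosine product (forward factors losing only $\exp(-O(\eps^2 N))$, a bounded far tail, and $O(\log(1/\eps))$ ``head'' factors near the cosine zeros bounded below by $\eps^{d(d-1)}$ via a Liouville/norm separation), and the same final comparison forcing $\beta+1\le 2$. The only deviations --- evaluating at $\eta\alpha^N$ with distances to $\tfrac14+\tfrac12\Z$ instead of at $\eta\alpha^L/2$ with distances to $\tfrac12+\Z$, using the norm together with conjugate bounds in place of Liouville's inequality with the naive height, and choosing $N\asymp\eps^{-2}(\log(1/\eps))^2$ rather than $\eps^{-2}$ --- are immaterial.
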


\end{document}